\crefname{subsection}{Subsection}{subsections}
\newtheorem{theorem}{Theorem}
\newtheorem{thm}{Theorem}[section]
\newtheorem{lem}[thm]{Lemma}
\newtheorem{prop}[thm]{Proposition}
\newtheorem{cor}[thm]{Corollary}
\newtheorem{conj}[thm]{Conjecture}
\theoremstyle{definition}
\newtheorem{defn}[thm]{Definition}
\newtheorem{notn}[thm]{Notation}
\newtheorem{const}[thm]{Construction}
\newtheorem{remark}[thm]{Remark}
\newtheorem{example}[thm]{Example}
\newtheorem{warn}[thm]{Warning}
\newcommand{\ncmd}{\newcommand}
\definecolor{DefColor}{rgb}{0.6,0.15,0.25}
\newcommand{\mdef}[1]{\textcolor{DefColor}{#1}}
\newcommand{\tdef}[1]{\mdef{\emph{#1}}}
\ncmd{\mbb}[1]{\mathbb{#1}}
\ncmd{\mrm}[1]{\mathrm{#1}}
\ncmd{\mcl}[1]{\mathcal{#1}}
\ncmd{\mfk}[1]{\mathfrak{#1}}
\ncmd{\mbf}[1]{\mathbf{#1}}
\ncmd{\mscr}[1]{\mathscr{#1}}
\ncmd{\todo}[1]{\textbf{TODO #1}}
\ncmd{\reftodo}[1]{\textbf{REF #1}}
\DeclareRobustCommand{\minwidthbox}[2]{%
  \mathmakebox[\ifdim#2<\width\width\else#2\fi]{#1}%
}
\ncmd{\too}[1][]{\xrightarrow{\minwidthbox{#1}{1em}}}
\ncmd{\iso}{\too[\smash{\raisebox{-0.5ex}{\ensuremath{\scriptstyle\sim}}}]}
\ncmd{\oot}[1][]{\xleftarrow{\minwidthbox{#1}{1em}}}
\ncmd{\hooktoo}[1][]{\xhookrightarrow{\minwidthbox{#1}{1em}}}
\ncmd{\adj}[1][]{\mathrel{\substack{\xrightarrow{\minwidthbox{#1}{1em}} \\[-.7ex] \xleftarrow{\minwidthbox{#1}{1em}}}}}
\ncmd{\qin}{\quad\in\quad}
\ncmd{\Id}{\mrm{Id}}
\ncmd{\tId}{\mbf{Id}}
\ncmd{\Nm}{\mathrm{Nm}}
\ncmd{\BB}{\mrm{B}}
\ncmd{\clB}{\mcl{B}}
\ncmd{\CC}{\mcl{C}}
\ncmd{\DD}{\mcl{D}}
\ncmd{\EE}{\mcl{E}}
\ncmd{\TT}{\mcl{T}}
\ncmd{\KK}{\mrm{K}}
\ncmd{\KU}{\mrm{KU}}
\ncmd{\tA}{\mbf{A}}
\ncmd{\tB}{\mbf{B}}
\ncmd{\cX}{\mcl{X}}
\ncmd{\cY}{\mcl{Y}}
\ncmd{\Kn}{\KK(n)}
\ncmd{\Knp}{\KK(n+1)}
\ncmd{\Kt}{\KK(t)}
\ncmd{\Ko}{\KK(1)}
\ncmd{\Tnp}{\mrm{T}(n+1)}
\ncmd{\rmE}{\mrm{E}}
\ncmd{\En}{\rmE_n}
\ncmd{\Enp}{\rmE_{n+1}}
\ncmd{\lsF}{\mscr{F}}
\ncmd{\lsG}{\mscr{G}}
\ncmd{\bbC}{\mbb{C}}
\ncmd{\GG}{\mbb{G}}
\ncmd{\NN}{\mbb{N}}
\ncmd{\ZZ}{\mbb{Z}}
\ncmd{\QQ}{\mbb{Q}}
\ncmd{\clQ}{\mcl{Q}}
\ncmd{\Qab}{\QQ(\zeta_\infty)}
\ncmd{\Zp}{\ZZ_p}
\ncmd{\Qp}{\QQ_p}
\ncmd{\Fp}{\mbb{F}_p}
\ncmd{\Fpbar}{\overline{\mbb{F}}_p}
\renewcommand{\SS}{\mbb{S}}
\ncmd{\rmP}{\mrm{P}}
\ncmd{\OO}{\mcl{O}}
\ncmd{\irchi}[2]{\raisebox{\depth/2}{$#1\chi$}}
\DeclareRobustCommand{\rchi}{{\mathpalette\irchi\relax}}
\ncmd{\ch}{\rchi}
\ncmd{\cch}{\widehat{\scalebox{1.15}{$\rchi$}}}
\ncmd{\fL}{L}
\ncmd{\fLLam}{\fL^\Lambda}
\ncmd{\LKn}{L_{\Kn}}
\ncmd{\LKnp}{L_{\Knp}}
\ncmd{\LKt}{L_{\Kt}}
\ncmd{\LKo}{L_{\Ko}}
\ncmd{\LL}{\mrm{L}}
\ncmd{\RR}{\mrm{R}}
\ncmd{\BC}{\mrm{BC}}
\ncmd{\psa}{\oplus}
\ncmd{\dbl}{\mrm{dbl}}
\ncmd{\op}{\mrm{op}}
\ncmd{\co}{2\text{-}\mrm{op}}
\ncmd{\lax}{\mathrm{lax}}
\ncmd{\oplax}{\mathrm{oplax}}
\ncmd{\ofin}{1\text{-}\mathrm{fin}}
\ncmd{\mfin}{m\text{-}\mathrm{fin}}
\ncmd{\pifin}{\pi\text{-}\mathrm{fin}}
\ncmd{\cpl}{\mrm{c}}
\ncmd{\cocpl}{\mrm{cc}}
\ncmd{\seg}{\mrm{seg}}
\ncmd{\et}{\mrm{\acute{e}t}}
\ncmd{\pt}{\mrm{pt}}
\ncmd{\Mod}{\mrm{Mod}}
\ncmd{\cMod}{\widehat{\Mod}\vphantom{\Mod}}
\ncmd{\Vect}{\mrm{Vect}}
\ncmd{\Ab}{\mrm{Ab}}
\ncmd{\Fin}{\mrm{Fin}}
\ncmd{\Spaces}{\mcl{S}}
\ncmd{\Spacespi}{\Spaces_{\pifin}}
\ncmd{\Spacesm}{\Spaces_{\mfin}}
\ncmd{\Spacesrelpi}{\Spaces^{\pifin}}
\ncmd{\Spaceso}{\Spaces_{\ofin}}
\ncmd{\Spacesoab}{\Spaceso^\mrm{ab}}
\ncmd{\OS}{\mcl{OS}}
\ncmd{\OSpi}{\mcl{OS}_{\pifin}}
\ncmd{\PP}{\mbb{P}}
\ncmd{\PDivor}{\PP\mathrm{Div}_{\mathrm{or}}}
\ncmd{\Sp}{\mrm{Sp}}
\ncmd{\SpKn}{\Sp_{\Kn}}
\ncmd{\SpKt}{\Sp_{\Kt}}
\ncmd{\LocSys}{\mrm{LocSys}}
\ncmd{\extLocSys}{\widehat{\LocSys}}
\ncmd{\Span}{\mrm{Span}}
\ncmd{\tSpan}{\mbf{Span}}
\ncmd{\Spano}{\Span_1}
\ncmd{\Spanoh}{\tSpan_{1\!\frac{1}{2}}}
\ncmd{\Spant}{\tSpan_2}
\ncmd{\Ucplcocpl}{\mbf{U}^{\cpl,\cocpl}}
\ncmd{\Ccplcocpl}{\CC^{\cpl,\cocpl}}
\ncmd{\Usa}{\mbf{U}^\psa}
\ncmd{\Cat}{\mrm{Cat}}
\ncmd{\tCat}{\mbf{Cat}}
\ncmd{\tCatt}{\mbf{Cat}_2}
\ncmd{\Catinfsa}{\Cat^{\oplus\text{-}\infty}}
\ncmd{\PrL}{\mrm{Pr}^\mrm{L}}
\ncmd{\Mfg}{\mcl{M}_\mrm{fg}}
\ncmd{\yon}{\text{\usefont{U}{min}{m}{n}\symbol{'110}}}
\DeclareFontFamily{U}{min}{}
\DeclareFontShape{U}{min}{m}{n}{<-> dmjhira}{}
\DeclareMathOperator{\Tr}{Tr}
\DeclareMathOperator{\ind}{Ind}
\DeclareMathOperator{\Map}{Map}
\DeclareMathOperator{\End}{End}
\DeclareMathOperator{\Hom}{Hom}
\DeclareMathOperator{\Fun}{Fun}
\DeclareMathOperator{\tFun}{\mbf{Fun}}
\DeclareMathOperator{\CAlg}{CAlg}
\DeclareMathOperator{\CMon}{CMon}
\ncmd{\CMoninf}{\CMon_\infty}
\DeclareMathOperator{\Split}{Split}
\DeclareMathOperator*{\colim}{colim}
\DeclareMathOperator{\PSh}{\mcl{P}}
\ncmd\noloc{%
  \nobreak
  \mspace{6mu plus 1mu}
  {:}
  \nonscript\mkern-\thinmuskip
  \mathpunct{}
  \mspace{2mu}
}
\title{Higher Semiadditivity in Transchromatic Homotopy Theory}
\author{Shay Ben-Moshe\thanks{Faculty of Mathematics and Computer Science, Weizmann Institute of Science, Israel.}}
\date{}
\begin{document}
	\maketitle
	
	\begin{abstract}
		We study the compatibility of higher semiadditivity across different chromatic heights.
		We prove that the categorified transchromatic character map assembles into a parameterized semiadditive functor, showing that it is higher semiadditive up to a free loop shift.
		By decategorification, this implies that the transchromatic character map is compatible with integration maps, generalizing the well-known formula for the character of an induced representation.
		Through a further decategorification process, we compute the higher semiadditive cardinalities of $\pi$-finite spaces at Lubin--Tate spectra.
	\end{abstract}

	\vspace{1em}

	\begin{figure}[ht!]
	  \centering
	  \includegraphics[width=140mm]{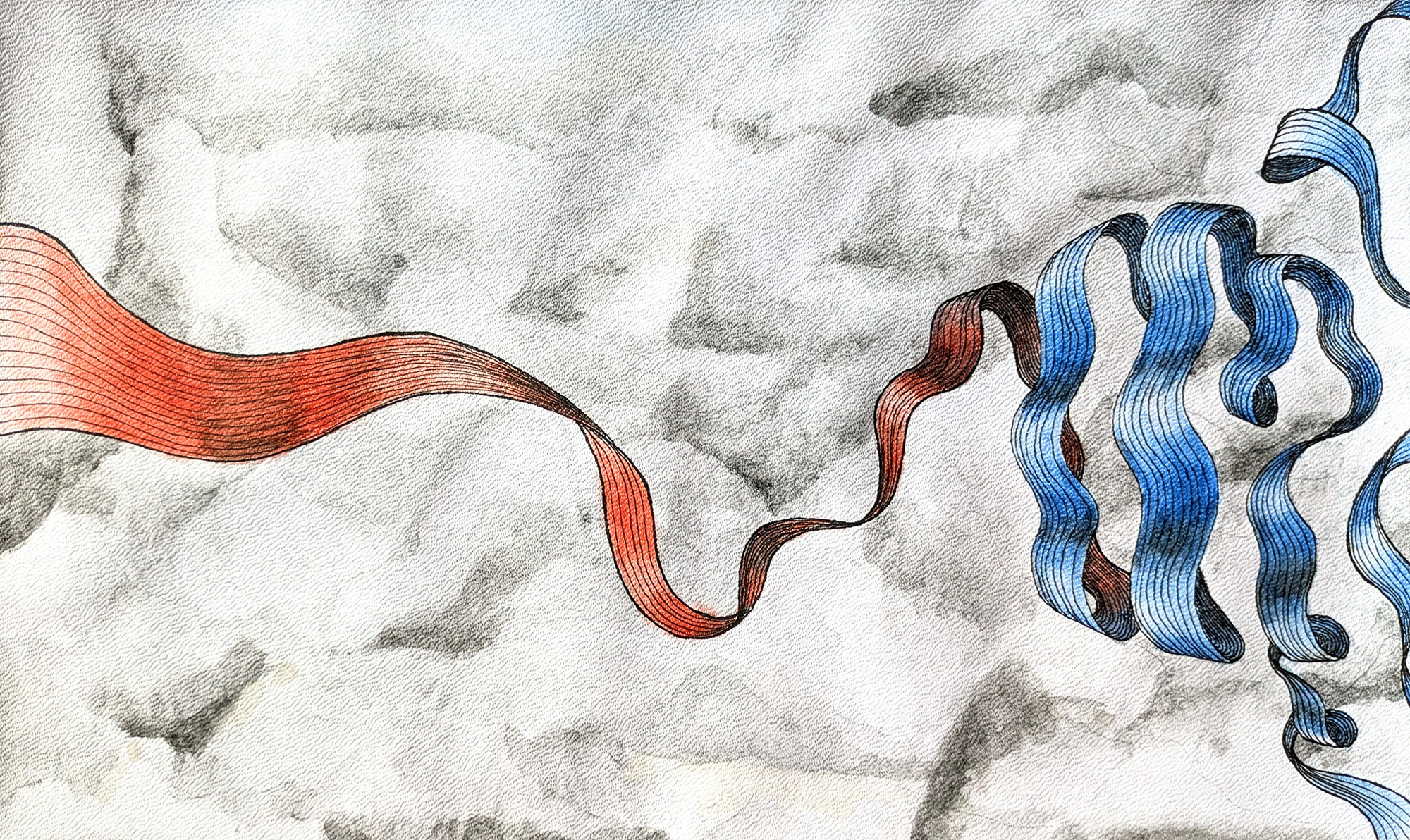}
	  \caption*{
		Untitled, by Irina Miodownik.
	  }
	\end{figure}

	\vspace{1em}

	\pagebreak
	
	\tableofcontents

	
	\section{Introduction}

\subsection{Transchromatic Induced Characters}

Throughout this paper, we use the term category to mean an $(\infty,1)$-category and the term $2$-category to mean an $(\infty,2)$-category.

A main tool in representation theory is the character map
\[
	\ch\colon R(G) \too \Qab^{\fL \BB G}
\]
from the representation ring of a finite group $G$ to the ring of class functions (note that the free loop space $\fL \BB G$ is equivalent to the homotopy quotient $G/\mkern-5mu/G$ by the conjugation action).
The Atiyah--Segal completion theorem \cite{AS} provides a homotopical interpretation of this construction, as it identifies the representation ring with the $0$-th homotopy group of $\KU^{\BB G}$, up to a completion.
Furthermore, note that the $0$-th homotopy group of the rationalization of $\KU$ is $\QQ$, which embeds into $\Qab$.
From this perspective, the character map relates chromatic heights $1$ and $0$.

The landmark works of Hopkins--Kuhn--Ravenel \cite{HKR}, Stapleton \cite{Stapleton} and Lurie \cite{Ell3} extend this construction to higher chromatic heights, as well as to more general spaces.
To start, recall that a formal group of a finite height $n \geq 0$ (over an implicit field of characteristic $p$) admits a universal deformation living over the Lubin--Tate spectrum $\En$, which in the case $n=1$ reproduces the $p$-completion of $\KU$.
Next, we would like to study the localization $\LKt\En$ at a lower height $0 \leq t < n$, in analogy with the rationalization of $\KU$.

While the formal group of $\En$ encapsulates many of its properties, as a formal scheme, the formal group is not well-behaved under base-change.
For example, its rationalization is a height $0$ formal group, losing all information.
On the other hand, the $p$-divisible group associated to the formal group, which over $\En$ captures the same information, is better behaved.
For instance, it retains its height under base-change.
This leads us to consider the base-change of the $p$-divisible group to $\LKt\En$.
Recall that any $p$-divisible group is canonically the extension of its \'etale part by its connected part.
We define the splitting algebra, denoted $C_t$, to be the initial commutative $\LKt\En$-algebra over which this extension splits, and the \'etale part becomes constant.
In the case $n=1$, the $0$-th homotopy group of $C_0$ is $\Qp(\zeta_{p^\infty})$, a $p$-adic analogue of $\Qab$.

With these in place, for any $\pi$-finite space $X$ (for instance, $X = \BB G$), there is a transchromatic character map
\[
	\ch\colon \En^X \too C_t^{\fL_p^{n-t} X},
\]
relating height $n$ and height $t$ information, where $\fL_p^{n-t} X := \Map(\BB \Zp^{n-t}, X)$ is the $(n-t)$-fold $p$-adic free loop space.

The ordinary character map is clearly contravariantly functorial in $G$, that is, it commutes with restriction of representations and class functions.
Importantly, it is also covariantly functorial, via the well-known formula for the character of an induced representation.
Indeed, for any inclusion of finite groups $f\colon H \hookrightarrow G$, the induction of representations and of class functions fit into a commutative square
\[\begin{tikzcd}[column sep=large]
	{R(H)} & {\Qab^{\fL \BB H}} \\
	{R(G)} & {\Qab^{\fL \BB G}.}
	\arrow["\ch", from=1-1, to=1-2]
	\arrow["{\ind_f}"', from=1-1, to=2-1]
	\arrow["{\ind_{\fL f}}", from=1-2, to=2-2]
	\arrow["\ch", from=2-1, to=2-2]
\end{tikzcd}\]
As the notation suggests, one can directly verify that the induction of characters is given by summation along the homotopy fibers of $\fL f\colon \fL \BB H \to \fL \BB G$, which are discrete and finite.

Recall that cohomology admits covariant transfer maps along maps with finite discrete fibers, endowing $\En$ with transfer maps along $\BB H \to \BB G$.
Hopkins--Kuhn--Ravenel proved a formula for the transchromatic character of these transfers (in the case $t=0$), which can similarly be interpreted as summation along the homotopy fibers of $\fL_p^n f\colon \fL_p^n \BB H \to \fL_p^n \BB G$.

In their seminal work, Hopkins--Lurie \cite{HL} proved that the $\Kn$-local categories are $\infty$-semiadditive.
Notably, this endows every $\Kn$-local spectrum with integration maps along maps of $\pi$-finite spaces, generalizing the aforementioned transfer maps.
As $\En$ is $\Kn$-local and, similarly, $C_t$ is $\Kt$-local, we may ask whether the transchromatic character map is compatible with these integration maps.
In \cite[Remark 7.4.8 and Remark 7.4.9]{Ell3}, Lurie indicates that this is indeed the case, in the sense that for a map of $\pi$-finite spaces $f\colon X \to Y$ there is a commutative square
\[\begin{tikzcd}[column sep=large]
	{\En^X} & {C_t^{\fL_p^{n-t} X}} \\
	{\En^Y} & {C_t^{\fL_p^{n-t} Y}.}
	\arrow["\ch", from=1-1, to=1-2]
	\arrow["{\int_f}"', from=1-1, to=2-1]
	\arrow["{\int_{\fL_p^{n-t} f}}", from=1-2, to=2-2]
	\arrow["\ch", from=2-1, to=2-2]
\end{tikzcd}\]
We note that this extends the Hopkins--Kuhn--Ravenel formula even when $f$ is a map of classifying spaces and $t=0$, when $f$ does not come from an inclusion of groups (in which case the homotopy fibers are not discrete, and higher semiadditivity is needed).
For instance, the case $\BB G \to \pt$ provides a generalization of the well-known formula expressing the dimension of the fixed points of a representation as the average value of its character.

In this paper, we prove the compatibility of the transchromatic character map with integration in a highly coherent fashion, a question hinted in \cite[Remark 7.4.7]{Ell3}.
Recall that in a semiadditive category, every object canonically promotes into a commutative monoid, encoded by a functor from the category $\Spano(\Fin)$ of spans of finite sets, reflecting the distributivity of transfers over restriction.
Harpaz \cite{Harpaz} proved an analogue of this result for $\infty$-semiadditive categories.
More specifically, he showed that the category $\Spano(\Spacespi)$ of spans of $\pi$-finite spaces is the free $\infty$-semiadditive category.
As such, every object in an $\infty$-semiadditive category promotes to an $\infty$-commutative monoid, encoded by a functor from $\Spano(\Spacespi)$, which captures the distributivity of integration over restriction.
In particular, we may apply this to $\En \in \SpKn$, and to $C_t \in \SpKt$ and pre-compose with the free loop space functor, to obtain
\[
	\En^{(-)}\colon \Spano(\Spacespi) \too \SpKn,
	\qquad C_t^{\fL_p^{n-t}(-)}\colon \Spano(\Spacespi) \too \SpKt.
\]
With this in mind, our main result is the following.

\begin{theorem}[{\cref{ind-char-form}}]\label{ind-char-form-intro}
	The transchromatic character map
	\[
		\ch\colon \En^X \too C_t^{\fL_p^{n-t} X}
	\]
	extends to a natural transformation in $X \in \Spano(\Spacespi)$.
\end{theorem}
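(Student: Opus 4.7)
The plan is to lift the statement to a categorified setting in which the character map has a direct structural description as a parameterized semiadditive functor, and then to recover the $1$-categorical statement by a decategorification procedure.

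As a preliminary step, I would note that both candidate functors $X \mapsto \LKt \En^X$ and $X \mapsto C_t^{\fL_p^{n-t} X}$ out of $\Spano(\Spacespi)$ exist by Harpaz's universal property, using that $\SpKt$ is $\infty$-semiadditive and that $\fL_p^{n-t}$ is a right adjoint, hence preserves pullbacks, hence induces an endofunctor of $\Spano(\Spacespi)$ that can be post-composed with $C_t^{(-)}$.

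The heart of the argument is the construction of a \emph{categorified} character map: a morphism of parameterized $2$-categories sending the parameterized $\Kn$-local $\En$-modules over $X$ to the parameterized $\Kt$-local $C_t$-modules over $\fL_p^{n-t} X$. I would then show this assembles into a parameterized semiadditive functor—in other words, that it intertwines parameterized integration along $f \colon X \to Y$ on the source with parameterized integration along $\fL_p^{n-t} f$ on the target, coherently in $f$. I expect this to be the main obstacle. The proof should exploit the structural role of the splitting algebra $C_t$: after base-change to $C_t$, the $p$-divisible group of $\En$ splits as its connected part extended by a constant \'etale part, and it is precisely this constant \'etale part that produces the free loop shift $\fL_p^{n-t}$ on the target.

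Finally, I would decategorify by applying endomorphisms of the unit object. Since forming mapping spectra out of the unit is itself semiadditive in a suitable sense, applying it termwise converts the parameterized semiadditive structure on the categorified character map into the desired natural transformation of functors $\Spano(\Spacespi) \to \SpKt$, whose pointwise value at a $\pi$-finite $X$ is the classical transchromatic character map $\ch\colon \En^X \to C_t^{\fL_p^{n-t} X}$.
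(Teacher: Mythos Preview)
Your overall architecture matches the paper's: categorify to a parameterized map of module categories, establish parameterized semiadditivity, then decategorify. But there is a genuine gap at the step you yourself flag as ``the main obstacle.'' The paper does not prove semiadditivity of $\cch$ by working directly with $\cMod_{\En}^{(-)}$ and $\cMod_{C_t}^{\fL_p^{n-t}(-)}$; instead it factors $\cch$ through Lurie's categories of \emph{tempered local systems} $\LocSys_\GG$. Lurie's tempered ambidexterity theorem makes every intermediate $\Spacespi$-parameterized category in the factorization $\Spacespi$-semiadditive, and each individual step (inclusion of $\Kn$-local tempered local systems, base-change along $\En \to C_t$, the functor $\Phi$ implementing the free-loop shift for a $p$-divisible group with constant summand, $\Kt$-localization) is shown to be a $\Spacespi$-parameterized left or right adjoint, hence automatically $\Spacespi$-semiadditive. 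Without the tempered intermediary there is no ambient parameterized-semiadditive world in which the base-change and free-loop-shift steps simultaneously live, and your plan gives no alternative mechanism for producing the semiadditivity.

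Your decategorification is also framed differently from the paper's. Rather than ``endomorphisms of the unit'' or ``mapping spectra out of the unit,'' the paper uses the Cnossen--Lenz--Linskens description of the free $\Spacespi$-semiadditive $\Spacespi$-parameterized category: an object $M \in \CC_\pt$ yields a functor $M_\CC^{(-)}\colon \Spano(\Spacespi) \to \CC_\pt$, and this is functorial in $\Spacespi$-semiadditive maps $F\colon \CC \to \DD$ via $F_\pt(M_\CC^X) \simeq F_\pt(M)_\DD^X$. Applying this to $M = \En$ and $F = \cch$ gives the transformation, but one still needs a separate change-of-parameterization argument (handled in the paper via $\Spant(\Spacespi)$) to identify integration in the parameterized category $\cMod_{C_t}^{\fL_p^{n-t}(-)}$ with ordinary integration in $\cMod_{C_t}^{(-)}$ along $\fL_p^{n-t} f$; your plan folds this into the semiadditivity of $\cch$ without isolating it.
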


\subsection{Chromatic Cardinalities}\label{subsec-chrom-card}

Let $R$ be a $\Kn$-local commutative ring spectrum and let $X$ be a $\pi$-finite space.
Carmeli--Schlank--Yanovski \cite{AmbiHeight} defined the cardinality of $X$ at $R$ to be the element $|X|_R \in \pi_0(R)$ corresponding to the composition of restriction and integration along $f_X\colon X \to \pt$
\[
	R \too R^X \too[\int_{f_X}] R.
\]
For example, the cardinality $|X|_{\QQ}$ is the Baez--Dolan homotopy cardinality \cite{hocard}.

In our previous joint work with Carmeli, Schlank and Yanovski \cite{card}, we computed the cardinalities at $\En$, when $X$ is further assumed to be a $p$-space.
We refer the reader to the introduction of that paper for a more detailed exposition of this problem.
As was mentioned in \cite[Example 2.2.4]{AmbiHeight}, there is an alternative approach via the transchromatic character map, which applies to all $\pi$-finite spaces (see \cref{other-work} for a discussion of the relation between the two approaches).
Indeed, the compatibility of the transchromatic character map with restriction and integration immediately implies the following, computing chromatic cardinalities in terms of Baez--Dolan homotopy cardinalities.

\begin{theorem}[{\cref{chrom-card}}]\label{chrom-card-intro}
    For any $\pi$-finite space $X$, the cardinality $|X|_{\En}$ lands in $\ZZ_{(p)} \subset \pi_0(\En)$, and we have
    \[
        |X|_{\En} = |\fL_p^n X|_\QQ.
    \]
\end{theorem}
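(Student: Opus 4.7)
The plan is to apply \cref{ind-char-form-intro} in the case $t=0$ to the span $\pt \leftarrow X \to \pt$ in $\Spano(\Spacespi)$, whose image under the functor $R^{(-)}$ is, by the very definition of the cardinality in \cite{AmbiHeight}, multiplication by $|X|_R$ on $R$. Applying this with $R = \En$ and with $C_0^{\fL_p^n(-)}$ in place of $R^{(-)}$ (noting that $\fL_p^n$ preserves $\pi$-finite spaces), the naturality asserted in \cref{ind-char-form-intro} yields a commutative square
\[
\begin{tikzcd}
\En \arrow[r, "\ch"] \arrow[d, "|X|_{\En}"'] & C_0 \arrow[d, "|\fL_p^n X|_{C_0}"] \\
\En \arrow[r, "\ch"'] & C_0,
\end{tikzcd}
\]
from which evaluating at the unit of $\pi_0(\En)$ gives the key identity $\ch(|X|_{\En}) = |\fL_p^n X|_{C_0}$ inside $\pi_0(C_0)$.

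The second step is to recognize the right-hand side. Since $C_0$ is $\KK(0)$-local, hence a rational ring spectrum, its $\infty$-semiadditive integration on $\pi$-finite spaces is uniquely determined by the underlying $\QQ$-algebra structure on $\pi_0$; consequently, for every $\pi$-finite space $Y$ one has $|Y|_{C_0} = |Y|_\QQ \cdot 1_{C_0}$. Applied to $Y = \fL_p^n X$, this identifies the right-hand side of the identity above with the Baez--Dolan homotopy cardinality $|\fL_p^n X|_\QQ$ viewed inside $\QQ \subset \pi_0(C_0)$.

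The third step is to descend this identity from $\pi_0(C_0)$ back to $\pi_0(\En)$. For this I would invoke the injectivity of the ring map $\pi_0(\En) \otimes \QQ \hookrightarrow \pi_0(C_0)$, which follows from the construction of $C_0$ as a flat extension of $\En \otimes \QQ$ obtained by inverting the deformation parameters and adjoining roots of unity to trivialize the \'etale quotient of the $p$-divisible group. The identity $\ch(|X|_{\En}) = |\fL_p^n X|_\QQ$ then lifts to $|X|_{\En} = |\fL_p^n X|_\QQ$ inside $\pi_0(\En) \otimes \QQ$. Since $\pi_0(\En)$ is a local $\Zp$-algebra with residue characteristic $p$, its intersection with $\QQ$ inside $\pi_0(\En) \otimes \QQ$ is exactly $\ZZ_{(p)}$, which simultaneously forces $|\fL_p^n X|_\QQ \in \ZZ_{(p)}$ and produces the desired identity $|X|_{\En} = |\fL_p^n X|_\QQ$ inside $\ZZ_{(p)} \subset \pi_0(\En)$.

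The main obstacle I anticipate is in the first step: carefully identifying the image of the span $\pt \leftarrow X \to \pt$ under the composite functor $C_t^{\fL_p^{n-t}(-)}$ requires unwinding Harpaz's universal property of $\Spano(\Spacespi)$ and checking that $\fL_p^{n-t}$ lifts to a morphism of span categories compatible with the higher semiadditive structures. Once this bookkeeping is in place, the remainder reduces to the routine rationality and injectivity statements described above.
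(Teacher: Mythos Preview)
Your proposal is correct and follows the same route as the paper: extract $\ch(|X|_{\En}) = |\fL_p^n X|_{C_0}$ from \cref{ind-char-form-intro} applied to the span $\pt \leftarrow X \to \pt$, identify the right-hand side with $|\fL_p^n X|_\QQ$ via the unit map $\QQ \to C_0$, and descend using the injectivity of $p^{-1}\pi_0(\En) \hookrightarrow \pi_0(C_0)$ (the paper additionally cites the a priori fact $|X|_{\En} \in \Zp$ coming from the unit $\SS_{\Kn} \to \En$, but your intersection argument $\pi_0(\En) \cap \QQ = \ZZ_{(p)}$ works equally well). The obstacle you anticipate is not one: once \cref{ind-char-form-intro} is granted with the target functor \emph{defined} as the composite $C_0^{(-)} \circ \fL_p^n$, the image of $\pt \leftarrow X \to \pt$ is tautologically multiplication by $|\fL_p^n X|_{C_0}$, and the bookkeeping you describe is exactly what the paper handles (via \cref{G-sa-cohint} and the $2$-category $\Spant(\Spacespi)$) inside the proof of \cref{ind-char-form-intro} itself.
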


\subsection{The Categorified Transchromatic Character Map}

Our proof of \cref{ind-char-form-intro} is based on a categorified result, concerning the compatibility of higher semiadditivity across different chromatic heights through Lurie's categorified transchromatic character map, which is of independent interest.

One way to construct an isomorphism between two higher commutative monoids is via categorification:
given a functor between $\infty$-semiadditive categories $F\colon \CC \to \DD$ which is $\infty$-semiadditive, i.e.\ preserves $\pi$-finite (co)limits, and an object $M \in \CC$, we get an isomorphism
\[
	F(M^X) \iso F(M)^X,
\]
natural in $X \in \Spano(\Spacespi)$.
However, the transchromatic character map does not arise in this way, as is evident from the appearance of the free loop in the target.
Indeed, the functors
\[
	\LKt\colon \SpKn \too \SpKt,
\]
controlling the gluing of the different chromatic layers, are not $\infty$-semiadditive, nor are the base-changes to the simpler algebras $\En$ and $C_t$
\[
	C_t \otimes_{\LKt\En} \LKt(-)\colon \cMod_{\En} \too \cMod_{C_t}.
\]
Instead, Lurie's construction of the transchromatic character map is based on a more elaborate categorification, as we now recall.

Given a complex periodic\footnote{That is, $R$ is complex orientable, and weakly $2$-periodic, i.e.\ $\pi_2(R)$ is a projective module of rank $1$ over $\pi_0(R)$ and $\pi_2(R) \otimes_{\pi_0(R)} \pi_{-2}(R) \to \pi_0(R)$ is an isomorphism.} commutative ring spectrum $R$, a $p$-divisible group $\GG$ over $R$, and a space $X$, Lurie constructs a category $\LocSys_\GG(X)$ of tempered local systems on $X$, which is an enhancement of the category $\Mod_R^X$ of ordinary local systems with values in $R$-module spectra, suited for capturing transchromatic information.
When $R$ is $\Kn$-local and $\GG = \GG_R^Q$ is its Quillen $p$-divisible group, the subcategory $\LocSys_\GG^{\Kn}(X)$ on those tempered local systems that are $\Kn$-local is equivalent to the subcategory $\cMod_R^X$ on those local systems that are $\Kn$-local.
Through an analysis of the case where the connected--\'etale sequence of $\GG$ splits and the \'etale part is constant (as is the case for the base-change of $\GG_{\En}^Q$ to $C_t$), Lurie constructs the categorified transchromatic character map, which under the connection with ordinary local systems corresponds to a functor
\[
	\cch\colon \cMod_{\En}^X \too \cMod_{C_t}^{\fL_p^{n-t} X}.
\]
The last main ingredient in the construction of the transchromatic character map is Lurie's seminal tempered ambidexterity theorem.
This is most naturally phrased in the language of parameterized categories, which we now briefly recall.

The framework of parameterized category theory was developed by various groups, including Barwick--Dotto--Glasman--Nardin--Shah \cite{BDGNS} and Martini--Wolf \cite{MW}.
Of particular relevance to us is the notion of parameterized semiadditivity, as developed by Cnossen--Lenz--Linskens \cite{CLL}, extending the works of Hopkins--Lurie and Harpaz.
A $\Spacespi$-parameterized category is simply a functor $\CC\colon \Spacespi^\op \to \Cat$.
For instance, given any (non-parameterized) category $\CC$, we can define the canonically $\Spacespi$-parameterized category sending $X$ to $\CC^X$, and $f\colon X \to Y$ to the pullback functor $f^*\colon \CC^Y \to \CC^X$.
Continuing with this example, we see that if $\CC$ has $\pi$-finite colimits, then the functors $f^*$ admit left adjoints, given by computing colimits fiber-wise.
Generalizing this, we say that a $\Spacespi$-parameterized category $\CC\colon \Spacespi^\op \to \Cat$ is $\Spacespi$-cocomplete if all of the functors $f^*$ admit left adjoints, and the Beck--Chevalley condition holds for any pullback square in $\Spacespi$, and we dually define $\Spacespi$-completeness.
$\Spacespi$-semiadditivity is then defined by requiring the invertibility of the norm maps, whose construction extends essentially as-is to the parameterized setting.
In a similar fashion, we can study parameterized maps between parameterized categories, as well as properties such as being parameterized (co)continuous and parameterized semiadditive.

With this in mind, Lurie's tempered ambidexterity theorem says that the categories of tempered local systems $\LocSys_\GG(X)$ assemble into a $\Spacespi$-parameterized $\Spacespi$-semiadditive category $\LocSys_\GG$.
In other words, the higher semiadditivity of $\Kn$-local $R$-modules extends to all $R$-modules, when they are suitably tempered.
Using this, we prove our main theorem, which is the analogues result for the categorified transchromatic character map, relating the higher semiadditivity of the different chromatic heights in the more familiar context of ordinary local systems.

\begin{theorem}[{\cref{main-thm}}]\label{main-thm-intro}
	The categorified transchromatic character map assembles into a $\Spacespi$-semiadditive $\Spacespi$-parameterized map between $\Spacespi$-semiadditive $\Spacespi$-parameterized categories
	\[
		\cch\colon \cMod_{\En}^{(-)} \too \cMod_{C_t}^{\fL_p^{n-t}(-)}.
	\]
\end{theorem}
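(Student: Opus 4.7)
The plan is to work throughout in the language of tempered local systems, where Lurie's tempered ambidexterity theorem already provides the $\Spacespi$-parameterized $\Spacespi$-semiadditive structure, and to exhibit $\cch$ as a composite of parameterized functors each of which is parameterized semiadditive.

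\textbf{Step 1 (factorization).} I would express $\cch$ as the composite
\begin{align*}
\cMod_{\En}^{(-)} &\iso \LocSys_{\GG_{\En}^Q}^{\Kn}(-) \too \LocSys_{\GG_{C_t}}^{\Kt}(-) \\
&\iso \LocSys_{\GG_{C_t}^0}^{\Kt}(\fL_p^{n-t}(-)) \iso \cMod_{C_t}^{\fL_p^{n-t}(-)},
\end{align*}
where $\GG_{C_t}$ denotes the base-change of $\GG_{\En}^Q$ along $\LKt\En \to C_t$, the middle arrow is the induced base-change on tempered local systems, and the subsequent equivalence arises from Lurie's splitting analysis: by construction of $C_t$, the connected--étale sequence of $\GG_{C_t}$ splits and its étale part becomes constant of rank $n-t$, producing the comparison with tempered local systems for the connected part $\GG_{C_t}^0$ on the $(n-t)$-fold $p$-adic free loop space. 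The outer equivalences are the identifications of tempered and ordinary local systems in the $\Kn$- and $\Kt$-local settings, which Lurie has already established as parameterized equivalences of parameterized semiadditive categories.

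\textbf{Step 2 (semiadditivity of the base-change factor).} The base-change functor on tempered local systems is a parameterized left adjoint, and is therefore parameterized cocontinuous. Since its target is parameterized semiadditive by tempered ambidexterity, parameterized cocontinuity automatically upgrades to parameterized semiadditivity. For the outer equivalences in Step 1 there is nothing further to check, so the entire content of the theorem is concentrated in the splitting equivalence.

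\textbf{Step 3 (main obstacle: parameterizing the splitting equivalence).} The central difficulty is to promote Lurie's splitting equivalence from a pointwise equivalence, indexed by $X \in \Spacespi$, to a $\Spacespi$-parameterized equivalence of parameterized semiadditive categories. Concretely, this requires organizing Lurie's construction so that it commutes with restriction functors $f^*$ along arbitrary maps $f\colon X \to Y$ of $\pi$-finite spaces, and recognizing $\fL_p^{n-t}(-) = \Map(\BB\Zp^{n-t},-)$ as a natural operation on $\Spacespi$ through which the left-hand parameterized category pulls back to the right-hand one. Once this parameterization is in place, parameterized semiadditivity follows formally: a parameterized equivalence between parameterized semiadditive categories necessarily preserves the ambidexterity structures. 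Thus the expected main obstacle is the coherent naturality argument for Lurie's splitting, and in particular the verification that the norm maps on the two sides become visibly compatible under the identification $\fL_p^{n-t} X = X^{\BB\Zp^{n-t}}$.
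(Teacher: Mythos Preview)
Your overall strategy---pass through tempered local systems, invoke tempered ambidexterity, and exhibit $\cch$ as a composite of parameterized adjoints between parameterized semiadditive categories---is exactly the paper's approach. However, your factorization and your treatment of the splitting step differ in a way that matters.

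First, the paper uses a finer seven-step factorization (see \cref{cch-def}): the inclusion $\LocSys_\GG^{\Kn} \hookrightarrow \LocSys_\GG$ (a parameterized \emph{right} adjoint), then base-change to $C_t$ (left adjoint), then the splitting step, then $\Kt$-localization (left adjoint), with equivalences at the ends. Your single ``middle arrow'' $\LocSys_{\GG_{\En}^Q}^{\Kn} \to \LocSys_{\GG_{C_t}}^{\Kt}$ is a composite of a right adjoint with two left adjoints, so is not itself a one-sided adjoint; you would still need the argument that each piece, being an adjoint between $\Spacespi$-semiadditive categories, is $\Spacespi$-semiadditive (\cref{left-sa}), and then compose. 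This is only a matter of bookkeeping.

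The substantive issue is your Step 3. You treat the splitting comparison $\LocSys_{\GG_{C_t}}^{\Kt}(-) \simeq \LocSys_{\GG_{C_t}^Q}^{\Kt}(\fL_p^{n-t}(-))$ as an \emph{equivalence} whose main difficulty is coherent parameterization. The paper does not claim this is an equivalence at all. Instead, Lurie's splitting analysis produces an \emph{adjunction}
\[
\Phi\colon \LocSys_{\GG_{C_t}^Q \oplus \Lambda} \adj (\fLLam)^* \LocSys_{\GG_{C_t}^Q} \noloc \Psi,
\]
and the paper's work (\cref{cat-chars-sa}) is to upgrade this to an $\OS$-parameterized adjunction by constructing $\Psi$ functorially and verifying a Beck--Chevalley condition for $\Phi$ using the explicit formula of \cref{phi-form}. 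The key insight is that you do not need an equivalence: a parameterized left adjoint between parameterized semiadditive categories is automatically parameterized semiadditive. Your framing of the obstacle as ``norm maps become visibly compatible under the identification'' is therefore aiming at something stronger than what is required, and you have not indicated how you would prove it. If you relax your claim from ``equivalence'' to ``parameterized left adjoint'' and carry out the Beck--Chevalley check, you recover the paper's argument.
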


Briefly, our proof proceeds as follows.
The construction of the categorified transchromatic character map is as a composition of a series of functors between categories of tempered local systems.
We first enhance each of these functors to a $\Spacespi$-parameterized map.
Now, recall that a (non-parameterized) left adjoint functor is colimit preserving.
Hence, a left adjoint functor between $\infty$-semiadditive categories is $\infty$-semiadditive, and the same holds dually for right adjoints.
Essentially by the same argument, this also holds in the parameterized setting.
Lurie's tempered ambidexterity theorem guarantees that all $\Spacespi$-parameterized categories involved are $\Spacespi$-semiadditive.
Thus, to conclude, we show that each of the $\Spacespi$-parameterized maps is either a left or a right adjoint, and hence $\Spacespi$-semiadditive, whence so is their composition.

Turning back to the construction of the transchromatic character map, \cref{main-thm-intro} allows us to reformulate Lurie's decategorification process in the following way.
Letting $f_X\colon X \to \pt$ and $f_{\fL_p^{n-t}X}\colon \fL_p^{n-t}X \to \pt$ denote the unique maps, we have
\[
	C_t \otimes_{\En} \En^X
	\simeq \cch((f_X)_* f_X^* \En)
	\simeq (f_{\fL_p^{n-t}X})_* f_{\fL_p^{n-t}X}^* \cch(\En)
	\simeq (f_{\fL_p^{n-t}X})_* f_{\fL_p^{n-t}X}^* C_t
	\simeq C_t^{\fL_p^{n-t}X},
\]
where the second isomorphism is by \cref{main-thm-intro}.

By controlling this decategorification process more carefully, we also prove \cref{ind-char-form-intro}.
Cnossen--Lenz--Linskens \cite{CLL} recently extended Harpaz's description of the free $\infty$-semiadditive category to the parameterized setting.
In particular, given an object in the underlying category of a $\Spacespi$-semiadditive $\Spacespi$-parameterized category $M \in \CC_\pt$, we get a parameterized analogue of a higher commutative monoid
\[
	M_\CC^{(-)}\colon \Spano(\Spacespi) \too \CC_\pt,
\]
sending a $\pi$-finite space $X$ to $M_\CC^X := f_{X*} f_X^* M$, and with functoriality given by the parameterized analogues of restriction and integration.
This construction is manifestly functorial in parameterized semiadditive maps, in the sense that given a $\Spacespi$-semiadditive $\Spacespi$-parameterized map $F\colon \CC \to \DD$, we get an isomorphism
\[
	F(M_\CC^X) \iso F(M)_\DD^X,
\]
natural in $X \in \Spano(\Spacespi)$.
\cref{ind-char-form-intro} thus follows by applying this to the categorified transchromatic character map.

The subtlety in this argument is the study of the parameterized integration maps.
Specifically, we need to identify the parameterized integration maps in the parameterized category $\cMod_{C_t}^{\fL_p^{n-t}(-)}$ with the $(n-t)$-fold $p$-adic free loop space of the integration maps in $\cMod_{C_t}^{(-)}$, in a coherent way.
We establish this in \cref{G-sa-cohint} by employing the $2$-category of iterated spans $\Spant(\Spacespi)$, and its connection to parameterized semiadditivity.
More precisely, we generalize the theory of parameterized categories to parameterized objects in any $2$-category, and show that $\Spant(\Spacespi)$ carries a canonical parameterized semiadditive object (\cref{span-2-sa}).
In fact, we conjecture that it is the universal $2$-category with a parameterized object (\cref{span-2-univ}).

\subsection{Relation to Other Work}\label{other-work}

\subsubsection{Categorical Characters}

Let $R$ be a $\Kn$-local commutative ring spectrum.
Given a local system of dualizable modules $M\colon X \to \cMod_R^\dbl$ indexed on a space $X$, there is an associated character
\[
	\ch_M\colon \fL X \too R,
	\qquad \ch_M(\gamma) := \Tr(\gamma \mid M).
\]
This construction assembles into the categorical character map
\[
	\ch\colon \cMod_R^{\dbl,X} \too R^{\fL X}.
\]
Taking $R = \bbC$ and $X = \BB G$ reproduces the ordinary character map.

Note that the category $\cMod_R^\dbl$ is $\infty$-semiadditive.
By the work of Harpaz \cite{Harpaz}, the category of $\infty$-semiadditive categories is itself $\infty$-semiadditive.
As such, $\cMod_R^\dbl$ admits integration maps along maps of $\pi$-finite spaces, which are given by Kan extensions, and thus generalizes the usual induction of representations when $R = \bbC$.
In work in progress by Carmeli--Cnossen--Ramzi--Yanovski, following up on their previous work \cite{CatChar}, it is shown that the categorical character map is compatible with integration for any $\Kn$-local commutative ring spectrum $R$, generalizing the case of height $0$, in analogy with \cref{ind-char-form-intro}.
As an immediate consequence, the categorical character map takes the cardinality of $X$ at $\cMod_R^\dbl$ to the cardinality of $\fL X$ at $R$, that is
\[
	\dim(|X|_{\cMod_R^\dbl}) = |\fL X|_R.
\]
This corollary was already proven independently by Carmeli--Schlank--Yanovski in \cite{AmbiHeight}.

The categorical setting can be bridged back to the chromatic setting via chromatically localized algebraic K-theory.
Starting again with the case $R = \bbC$ and $X = \BB G$ where $G$ is a finite $p$-group, the Atiyah--Segal completion theorem shows that the $\Ko$-localized algebraic K-theory is
\[
	\LKo\KK(\cMod_\bbC^{\dbl,\BB G})
	\simeq \LKo\KK(\cMod_\bbC^{\dbl})^{\BB G}
	\simeq \KU_p^{\BB G}.
\]
Furthermore, in a joint work with Schlank \cite{BMS} we have shown that this construction is compatible with integration maps, so that the integration maps on $\KU_p$ correspond to the induction of representations under the Atiyah--Segal isomorphism.
In our joint work with Carmeli, Schlank and Yanovski \cite{Desc} we extended this to all heights, showing in particular that if $X$ is a $\pi$-finite $p$-space, then
\[
	\LKnp\KK(\cMod_R^{\dbl,X})
	\simeq \LKnp\KK(\cMod_R^\dbl)^X
	\simeq \LKnp\KK(R)^X,
\]
and similarly, this construction is compatible with integration maps, hence with cardinalities.

As we mentioned above, in \cite{card} we gave a different proof of \cref{chrom-card-intro} in the special case where $X$ is a $p$-space.
Briefly put, combining the two compatibilities with cardinalities described above, we deduce that
\[
	|X|_{\LKnp\KK(\En)} = |\fL X|_{\En}.
\]
Yuan \cite{yuan} has shown that $\LKnp\KK(\En)$ does not vanish, and thus by the chromatic nullstellensatz of Burklund--Schlank--Yanovski \cite{null}, it admits a map to a height $n+1$ Lubin--Tate spectrum.
This implies that
\[
	|X|_{\Enp} = |X|_{\LKnp\KK(\En)} = |\fL X|_{\En},
\]
which in turn implies \cref{chrom-card-intro} for $p$-spaces by induction.



\subsubsection{Twisted Semiadditivity}

\cref{main-thm-intro} establishes the compatibility of the $\infty$-semiadditivity of different chromatic layers, with an introduction of a free loop shift.
However, this result only concerns this compatibility over the faithfully flat $\SS_{\Kn}$-algebra $\En$ and the faithfully flat $\LKt\En$-algebra $C_t$.
This leaves open the question of descending back to the $\Kn$- and $\Kt$-local spheres, to directly study the functor
\[
	\LKt\colon \SpKn \too \SpKt.
\]
This would involve two different descent processes: from $C_t$ to $\LKt\En$, and from $\En$ to $\SS_{\Kn}$.
As an illustration for the complications that arise in this case, we note that in contrast to the case of $\En$, the cardinalities over the $\Kn$-local sphere are not $p$-adic integers.
For example, Carmeli--Yuan \cite{CY} proved that at height $n=1$ and the prime $p=2$, we have
\[
	|\BB C_2|_{\SS_{\Ko}} = 1 + \varepsilon
	\qin \pi_0(\SS_{\Ko}) \simeq \ZZ_2[\varepsilon]/(\varepsilon^2, 2\varepsilon).
\]
This question, particularly for the case $t=0$, is closely related to the recent calculation of the rationalization of the $\Kn$-local sphere by Barthel--Schlank--Stapleton--Weinstein \cite{RatKn}.

\subsection{Organization}

For the reader's convenience, we provide a brief linear overview of the contents of the paper.

In \cref{sec-param}, we give an overview of the theory of parameterized categories with an emphasis on parameterized semiadditivity, and extend it to parameterized objects in $2$-categories other than the $2$-category of categories.
Particularly, we prove \cref{span-2-sa}, showing that the $2$-category of iterated spans carries a parameterized semiadditive object, and present \cref{span-2-univ} stating that it is the universal example.

In \cref{sec-decat}, we study the decategorification of parameterized semiadditive parameterized categories, and maps between them.
We use the description of the free parameterized semiadditive parameterized category of Cnossen--Lenz--Linskens to construct the parameterized analogue of cohomology, together with its integration maps, in a coherent fashion.
We describe a further decategorification process, constructing the parameterized analogue of higher semiadditive cardinalities.
Finally, we employ \cref{span-2-sa} mentioned above to relate these constructions to the non-parameterized setting.

In \cref{sec-temp}, we show that the categories of tempered local systems assemble into a parameterized category, and that the various functors between them constructed by Lurie assemble into parameterized adjoint maps.

Finally, in \cref{sec-ind-char}, we combine the results of \cref{sec-param} and \cref{sec-temp} to prove \cref{main-thm-intro}, and apply the decategorification processes from \cref{sec-decat} to deduce \cref{ind-char-form-intro} and \cref{chrom-card-intro}.

\subsection{Acknowledgements}

I am deeply indebted to Lior Yanovski for his contributions to this work.
Notably, he introduced me to the question of the compatibility of the transchromatic character map with integration, and explained the idea that morphisms of cohomologies shifted by free loop spaces should arise from suitable categorified maps.
I am grateful for his guidance and support throughout the project, and for many helpful discussions and comments on the manuscript.
I would also like to thank Tomer Schlank for his support and interest in the project, and Emmanuel Farjoun for his comments on an earlier draft of this paper.
Finally, I would like to thank Irina Miodownik for painting the artwork on the front page.

	\section{Parameterized Category Theory}\label{sec-param}

In this section we review and extend the theory of parameterized category theory, mostly following Martini--Wolf \cite{MW}, and Cnossen--Lenz--Linskens \cite{CLL} who developed the theory of parameterized semiadditivity.
We differ from these references in two important ways:
\begin{enumerate}
    \item We do not parameterize over an arbitrary topos, rather, our case is equivalent to parameterizing over the presheaf topos $\PSh(\cX)$ where $\cX$ is some category.
    \item We extend the theory from parameterized categories to parameterized objects in any $2$-category $\tA$.
    While the most important case is indeed $\tA = \tCat$, this flexibility allow us to phrase the universal property of the $2$-category of spans in this language (see \cref{span-1.5-univ}), and our related study of the $2$-category of iterated spans (see \cref{span-2-sa} and \cref{span-2-univ}).
\end{enumerate}
With this in mind, we fix the following.

\begin{notn}
    Throughout this section, $\mdef{\cX}$ is a category with pullbacks, and $\mdef{\tA}$ is a $2$-category.
\end{notn}

\subsection{Parameterized Objects and Adjunctions}

\begin{defn}
    We define the $2$-category of \tdef{$\cX$-parameterized $\tA$-objects} to be the $2$-category of functors $\mdef{\tFun(\cX^\op, \tA)}$.
    For an $\cX$-parameterized $\tA$-object $\CC$ and $f\colon X \to Y$ in $\cX$, we denote the image of $f$ under $\CC$ by $f^*\colon \CC_Y \to \CC_X$.
\end{defn}

This naming convention is motivated by the following example.

\begin{example}\label{can-param}
    For $\tA = \tCat$, the $2$-category $\tFun(\cX^\op, \tCat)$ is the $2$-category of \tdef{$\cX$-parameterized categories}.

    If furthermore $\cX \subset \Spaces$ is a subcategory of spaces, any (non-parameterized) category $\CC \in \Cat$ gives rise to the \tdef{canonically $\cX$-parameterized category} $\CC\colon \cX^\op \to \tCat$, given by the restriction of the functor $\CC^{(-)}\colon \Spaces^\op \to \tCat$ to $\cX^\op$.
    Namely, the functor sending $X \in \cX$ to $\CC_X := \CC^X$ and $f\colon X \to Y$ to the pre-composition with $f$ functor $f^* \colon \CC_Y \to \CC_X$.
    (The reader may wonder why we have restricted $\cX$ to be a subcategory of spaces rather than of categories. We return to this point in \cref{warn-cocomp} below.)
\end{example}

As $\tFun(\cX^\op, \tA)$ is a $2$-category, one may consider adjoint $1$-morphisms.
These turn out to admit a simple description.

\begin{prop}\label{param-adj}
    A $1$-morphism $L\colon \CC \to \DD$ between $\cX$-parameterized $\tA$-objects is a left adjoint in $\tFun(\cX^\op, \tA)$ if and only if the following two conditions hold:
    \begin{enumerate}
        \item For any $X \in \cX$, the $1$-morphism $L_X\colon \CC_X \to \DD_X$ in $\tA$ has a right adjoint $R_X\colon \DD_X \to \CC_X$.
        \item For any $f\colon X \to Y$ in $\cX$ the Beck--Chevalley map
            \[
                f^* R_Y \too R_X f^*
            \]
            obtained from taking vertical right adjoints in the commutative square
            \[\begin{tikzcd}
                {\CC_Y} & {\CC_X} \\
                {\DD_Y} & {\DD_X}
                \arrow["{f^*}", from=1-1, to=1-2]
                \arrow["{f^*}", from=2-1, to=2-2]
                \arrow["{L_Y}"', from=1-1, to=2-1]
                \arrow["{L_X}", from=1-2, to=2-2]
            \end{tikzcd}\]
            is an isomorphism.
    \end{enumerate}
\end{prop}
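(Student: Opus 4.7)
The plan is to verify both implications via the mate calculus for adjunctions in a $2$-category, applied pointwise in $\cX$. For the forward direction, assume $L$ admits a right adjoint $R$ in $\tFun(\cX^\op, \tA)$ with unit $\eta\colon \Id \Rightarrow RL$ and counit $\epsilon\colon LR \Rightarrow \Id$. Evaluation at any $X \in \cX$ is a $2$-functor $\tFun(\cX^\op, \tA) \to \tA$, and $2$-functors preserve adjunctions; hence $L_X \dashv R_X$ with unit $\eta_X$ and counit $\epsilon_X$, giving (1). For (2), I note that the $1$-morphism $R$ is a pseudo-natural transformation and so carries structure isomorphisms $R(f)\colon f^* R_Y \iso R_X f^*$. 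Unwinding the Beck--Chevalley map — the pasting of $\eta_X$, the invertible structure $2$-cell of $L$ witnessing $L_X f^* \simeq f^* L_Y$, and $\epsilon_Y$ — and applying the triangle identities for $L_X \dashv R_X$ and $L_Y \dashv R_Y$, I identify the mate with $R(f)$, which is therefore invertible.

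For the backward direction, assume (1) and (2). I construct $R\colon \DD \to \CC$ as a $1$-morphism in $\tFun(\cX^\op, \tA)$ by setting its value at $X$ to be $R_X$ and by taking the structure isomorphism $R(f)$ for $f\colon X \to Y$ to be the inverse of the Beck--Chevalley mate, which is invertible by hypothesis. The pseudofunctoriality coherence $R(gf) \simeq R(f) \circ f^* R(g)$ for composable $f, g$ in $\cX$ follows from the compositionality of mates under horizontal pasting, applied to the pseudofunctoriality data of $L$. The global unit and counit are then defined pointwise by $\eta_X$ and $\epsilon_X$; their pseudo-naturality in $X$ is again a standard consequence of mate calculus, and the triangle identities hold levelwise by construction.

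The main obstacle is not the conceptual argument — which is classical in strict $2$-category theory — but making it precise in the $(\infty,2)$-categorical setting of an arbitrary $\tA$: the mate calculus must be interpreted as a coherent equivalence of $2$-cells rather than a strict equality, and the pointwise data must assemble into a bona fide pseudofunctor with coherent unit and counit. I would handle this by appealing to a general result on adjoints in $2$-functor categories of $(\infty,2)$-categories. In the case $\tA = \tCat$, such a statement follows from the equivalence between $\tFun(\cX^\op, \tCat)$ and cocartesian fibrations over $\cX$, under which an adjunction of parameterized categories corresponds precisely to a fiberwise adjunction satisfying the Beck--Chevalley condition. For a general $\tA$, one can either reduce to the case $\tA = \tCat$ by applying $\tA$-valued representables $\Map_{\tA}(a,-)$ for $a \in \tA$ and invoking the Yoneda $2$-lemma, or invoke the analogous coherent mate correspondence directly in the $(\infty,2)$-categorical framework.
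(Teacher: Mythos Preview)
Your sketch is correct in substance, and in its final paragraph it converges with the paper's own treatment: the paper does not give an argument at all but simply cites \cite[Corollary 5.2.12(2)]{fun-adj} for general $\tA$ and \cite[Proposition 3.2.9, Corollary 3.2.11]{MW} for $\tA = \tCat$. What you add over the paper is the classical mate-calculus explanation of \emph{why} the result holds --- evaluation $2$-functors preserve adjunctions, the Beck--Chevalley mate is the structure isomorphism of $R$, and conversely the pointwise right adjoints assemble via the inverse mates --- before conceding that the $(\infty,2)$-coherence must be imported from the literature. So this is not a genuinely different proof; both you and the paper end by appealing to an external result on adjoints in functor $2$-categories, with your version supplying the motivating picture that the paper omits. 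Your proposed reduction to $\tA = \tCat$ via $\tHom_\tA(a,-)$ and the $2$-categorical Yoneda lemma is a workable alternative route to that same citation, though carrying it out carefully (in particular, checking that pointwise-in-$a$ right adjoints assemble to an internal right adjoint in $\tA$, and that the conditions (1)--(2) are detected by the representables) is itself nontrivial and would not in practice avoid invoking a reference.
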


\begin{proof}
    This is (a special case of) \cite[Corollary 5.2.12(2)]{fun-adj}.
    The case of parameterized categories (i.e.\ $\tA = \tCat$) was previously proven directly in \cite[Proposition 3.2.9 and Corollary 3.2.11]{MW}.
\end{proof}

\subsection{Parameterized (Co)completeness}

\subsubsection{Definitions and Basic Properties}

Let $\CC$ be a (non-parameterized) category, and consider the canonically parameterized category $\CC\colon \Spaces^\op \to \tCat$ of \cref{can-param}.
Recall that $\CC$ admits colimits indexed by spaces if and only if for any $f\colon X \to Y$, the functor $f^*\colon \CC^Y \to \CC^X$ admits a left adjoint $f_!\colon \CC^X \to \CC^Y$, which is then given by the left Kan extension along $f$.
Furthermore, in this case, left Kan extensions are given by the colimit along the fibers.
We turn these observations into definitions in the general case.

\begin{defn}
    An $\cX$-parameterized $\tA$-object $\CC\colon \cX^\op \to \tA$ is called \tdef{$\cX$-cocomplete} if
    \begin{enumerate}
        \item for any $f\colon X \to Y$ in $\cX$, the morphism $f^*\colon \CC_Y \to \CC_X$ admits a left adjoint $f_!\colon \CC_X \to \CC_Y$,
        \item and for every pullback square
            \[\begin{tikzcd}
                P & Y \\
                X & Z
                \arrow["{\widetilde{f}}", from=1-1, to=1-2]
                \arrow["g", from=1-2, to=2-2]
                \arrow["f", from=2-1, to=2-2]
                \arrow["{\widetilde{g}}"', from=1-1, to=2-1]
                \arrow["\lrcorner"{anchor=center, pos=0.125}, draw=none, from=1-1, to=2-2]
            \end{tikzcd}\]
            in $\cX$, the Beck--Chevalley map
            \[
                \widetilde{f}_! \widetilde{g}^* \too g^* f_!
            \]
            is an isomorphism.
    \end{enumerate}
    \tdef{$\cX$-complete} $\tA$-objects are defined dually.
\end{defn}

\begin{warn}\label{warn-cocomp}
    Continuing with the case of canonically parameterized categories from \cref{can-param}, we see that our definition of $\cX$-cocompleteness is inadequate if we were to parameterize over $\cX = \Cat$ rather than $\Spaces$, as condition (2) is not satisfied.
    To see this, recall that the left Kan extension of $M \in \CC^I$ along $f\colon I \to J$ is given by
    \[
        (f_! M)_j \simeq \colim_{f(i) \to j} M_i,
    \]
    that is, the colimit of
    \[
        I_{/j} \too I \too[M] \CC.
    \]
    The Beck--Chevalley condition on the other hand states that it is isomorphic to the colimit of
    \[
        I_j \too I \too[M] \CC
    \]
    where $I_j$ is the fiber of $f$ at $j$, i.e.\ the full subcategory of $I_{/j}$ on those maps $f(i) \to j$ which are isomorphisms.
    Nevertheless, we see that if $J$ is a space then $I_j = I_{/j}$, and the issue does not arise.
\end{warn}

With this definition in mind, we also define cocontinuous morphisms in analogy to the case of parameterized categories, where we would require functors to preserve colimits, or, equivalently, commute with left Kan extensions.

\begin{defn}\label{def-cocont}
    A morphism $F\colon \CC \to \DD$ between $\cX$-cocomplete $\tA$-objects is called \tdef{$\cX$-cocontinuous} if for any $f\colon X \to Y$ in $\cX$ the Beck--Chevalley map
    \[
        f_! F_X \too F_Y f_!
    \]
    obtained from taking horizontal left adjoints in the commutative square
    \[\begin{tikzcd}
        {\CC_Y} & {\CC_X} \\
        {\DD_Y} & {\DD_X}
        \arrow["{f^*}", from=1-1, to=1-2]
        \arrow["{F_Y}"', from=1-1, to=2-1]
        \arrow["{F_X}", from=1-2, to=2-2]
        \arrow["{f^*}", from=2-1, to=2-2]
    \end{tikzcd}\]
    is an isomorphism.
    \tdef{$\cX$-continuous} morphisms are defined dually.
\end{defn}

\begin{defn}
    We denote by $\mdef{\tFun^{\cocpl}(\cX^\op, \tA)} \subset \tFun(\cX^\op, \tA)$ the $2$-subcategory whose objects are the $\cX$-cocomplete $\tA$-objects and whose $1$-morphisms are the $\cX$-cocontinuous morphisms (and all higher morphisms).
    Dually, we denote by $\mdef{\tFun^{\cpl}(\cX^\op, \tA)}$ the $2$-subcategory on the $\cX$-complete $\tA$-objects and $\cX$-continuous $1$-morphisms.
    We denote by $\mdef{\tFun^{\cpl,\cocpl}(\cX^\op, \tA)}$ their intersection.
\end{defn}

When the $\tA$-objects are $\cX$-cocomplete, one can give an alternative description of $\cX$-parameterized left adjoints in terms of $\cX$-cocontinuity.

\begin{prop}\label{pla-cond}
    Let $L\colon \CC \to \DD$ be a $1$-morphism between $\cX$-cocomplete $\tA$-objects.
    Then, $L$ is a left adjoint if and only if the following two conditions holds:
    \begin{enumerate}
        \item For any $X \in \cX$, the $1$-morphism $L_X\colon \CC_X \to \DD_X$ in $\tA$ is a left adjoint.
        \item $L$ is $\cX$-cocontinuous.
    \end{enumerate}
\end{prop}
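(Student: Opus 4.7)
The plan is to reduce the statement directly to Proposition \ref{param-adj} via the calculus of mates. First, I would unpack what each side asserts. By Proposition \ref{param-adj}, $L$ is a left adjoint in $\tFun(\cX^\op, \tA)$ if and only if condition (1) holds and, for every $f\colon X \to Y$ in $\cX$, the Beck--Chevalley transformation
\[
    \alpha_f\colon f^* R_Y \too R_X f^*,
\]
obtained from the naturality square of $L$ by taking right adjoints of the vertical edges, is invertible. Dually, Definition \ref{def-cocont} says that $\cX$-cocontinuity of $L$ is the invertibility, for each $f$, of the horizontal mate
\[
    \beta_f\colon f_! L_X \too L_Y f_!,
\]
obtained by taking left adjoints of the horizontal edges; the left adjoints $f_!$ exist here precisely because $\CC$ and $\DD$ are $\cX$-cocomplete.

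Granting condition (1) on both sides, the proposition thus reduces to the pointwise $2$-categorical assertion that, in a commutative square in $\tA$ whose horizontal edges admit left adjoints and whose vertical edges admit right adjoints, the vertical Beck--Chevalley mate $\alpha_f$ is invertible if and only if the horizontal one $\beta_f$ is. This is the standard ``doubly adjointable square'' statement in the calculus of mates: one forms the iterated mate through both pairs of adjunctions and uses the triangle identities to show that, up to pre- and post-composition with units and counits of the auxiliary adjunctions, the invertibility of $\alpha_f$ forces that of $\beta_f$, and conversely. Combining this equivalence with Proposition \ref{param-adj} yields the desired characterization.

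The main obstacle is essentially citational: I need the mate calculus to hold in the generality of an arbitrary $2$-category $\tA$ (which here is an $(\infty,2)$-category), rather than in the classical strict $2$-categorical setting of Kelly--Street. For the $(\infty,2)$-categorical setup used in this paper, this is available in the same references invoked for Proposition \ref{param-adj}, and in particular follows from the $(\infty,2)$-categorical mate correspondence of \cite{fun-adj}; aside from that invocation, the proof is a short bookkeeping exercise.
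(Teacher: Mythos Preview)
Your proposal is correct and follows essentially the same route as the paper's own proof: reduce to Proposition~\ref{param-adj}, observe that condition~(1) matches verbatim, and then argue that the two Beck--Chevalley conditions are equivalent via the mate correspondence. The paper's proof is simply terser, writing only ``by passing to the adjoints'' where you spell out the doubly-adjointable-square argument and flag the $(\infty,2)$-categorical reference.
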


\begin{proof}
    Condition (1) is precisely condition (1) of \cref{param-adj}.
    As for condition (2), the Beck--Chevalley condition appearing in \cref{def-cocont} is equivalent to the Beck--Chevalley condition from condition (2) in \cref{param-adj} by passing to the adjoints.
\end{proof}

\subsubsection{Universal (Co)completeness}

The $2$-category of spans, which we denote by $\mdef{\Spanoh(\cX)}$, was constructed by Haugseng \cite{spans} (where it is denoted by $\Spano^+(\cX)$), to which we refer the reader for a detailed discussion.
We informally recall that this is the $2$-category whose objects are the objects of $\cX$, a $1$-morphism from $X$ to $Y$ is a span $X \gets Z \to Y$ (and composition is given by pullback), and a $2$-morphism is a commutative diagram
\[\begin{tikzcd}[row sep=small]
	& Z \\
	X && Y \\
	& W
	\arrow[from=1-2, to=3-2]
	\arrow[from=1-2, to=2-1]
	\arrow[from=1-2, to=2-3]
	\arrow[from=3-2, to=2-1]
	\arrow[from=3-2, to=2-3]
\end{tikzcd}\]

This $2$-category enjoys a universal property, originally stated in \cite[Chapter 7, Theorem 3.2.2]{GR}, and recently proven independently by Macpherson \cite{Macph} and Stefanich \cite{Stef} (we also refer the reader to \cite[Theorem 2.2.7]{EH} for another presentation of Macpherson's proof).
We rephrase this in terms of $\cX$-cocompleteness.

\begin{thm}[{\cite[4.2.6]{Macph}, \cite[Theorem 3.4.18]{Stef}}]\label{span-1.5-univ}
    The map $\cX^\op \to \Spanoh(\cX)$ exhibits the target as the universal $2$-category with an $\cX$-cocomplete object.
    That is, pre-composition with the map $\cX^\op \to \Spanoh(\cX)$ induces an isomorphism of spaces
    \[
        \Map(\Spanoh(\cX), \tA) \iso \Map^{\cocpl}(\cX^\op, \tA).
    \]
\end{thm}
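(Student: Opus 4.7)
The plan is to follow the approaches of Macpherson and Stefanich, whose key ideas I now sketch. The map $\iota\colon \cX^\op \to \Spanoh(\cX)$ sends a morphism $f\colon X \to Y$ in $\cX$, viewed as a morphism $Y \to X$ in $\cX^\op$, to the ``backward'' span $Y \xleftarrow{f} X \xrightarrow{\Id} X$. The crucial structural observation is that inside $\Spanoh(\cX)$, every such backward span admits the ``forward'' span $X \xleftarrow{\Id} X \xrightarrow{f} Y$ as a left adjoint, with unit the diagonal $X \to X \times_Y X$ and counit the map $f\colon X \to Y$ interpreted as the evident $2$-cell of spans. This adjunction is a formal property of the $2$-category of spans and holds for every morphism in $\cX$.

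First, I would show that for any $2$-functor $F\colon \Spanoh(\cX) \to \tA$, the restriction $F \circ \iota$ is an $\cX$-cocomplete object of $\tA$. Since $2$-functors preserve adjunctions, $f_! := F(X \xleftarrow{\Id} X \xrightarrow{f} Y)$ is a left adjoint of $f^* := F(\iota(f))$. For Beck--Chevalley, given a pullback square as in the statement, one directly computes that the composition of spans renders both $\widetilde{f}_! \widetilde{g}^*$ and $g^* f_!$ as literally the same morphism in $\Spanoh(\cX)$, represented by the span $X \xleftarrow{\widetilde{g}} P \xrightarrow{\widetilde{f}} Y$; applying $F$ then yields the required isomorphism. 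This construction is $2$-natural in $F$, producing the forward map of the theorem.

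To construct the inverse, starting from an $\cX$-cocomplete object $\CC\colon \cX^\op \to \tA$, I would send a span $X \xleftarrow{g} Z \xrightarrow{h} Y$ to the composite $h_! g^*$ in $\tA$. The compatibility with composition of spans is precisely the assumed Beck--Chevalley condition, and $2$-morphisms between spans translate into natural transformations built from units and counits of the $(f_! \dashv f^*)$ adjunctions. The two constructions are essentially mutually inverse: restriction along $\iota$ recovers $\CC$ on backward spans, and on forward spans one recovers the designated left adjoints by the uniqueness of adjoints in $\tA$.

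The main obstacle is the higher coherence required to upgrade these informal assignments into an actual $2$-functor out of the $(\infty,2)$-category $\Spanoh(\cX)$, and to match this up with the full mapping-space statement. The strategy of Macpherson and Stefanich is to realize $\Spanoh(\cX)$ as arising from a natural double $\infty$-categorical construction, whose two directions encode the forward and backward spans independently, and to deduce the $2$-categorical universal property from a more tractable universal property at the level of double $\infty$-categories. This converts the problem of constructing an intricate tower of higher adjunction coherences into a combinatorial statement about double $\infty$-categories that can be verified directly.
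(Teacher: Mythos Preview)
The paper does not supply a proof of this theorem at all: it is simply stated with attribution to \cite[4.2.6]{Macph} and \cite[Theorem 3.4.18]{Stef} (with a further pointer to \cite[Theorem 2.2.7]{EH}), and the text moves on immediately to \cref{span-adj}. So there is nothing in the paper to compare your argument against.

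That said, your sketch is a faithful outline of the strategy in the cited references: the forward direction (any $2$-functor out of $\Spanoh(\cX)$ restricts to an $\cX$-cocomplete object) is exactly as you describe, and you correctly isolate the real difficulty as the higher coherence needed to promote the assignment $(X \gets Z \to Y) \mapsto h_! g^*$ to a $2$-functor, which those authors handle via a double-$\infty$-categorical model. One small refinement: when you say that $\widetilde{f}_! \widetilde{g}^*$ and $g^* f_!$ are ``literally the same morphism'' in $\Spanoh(\cX)$, what is actually needed is that the Beck--Chevalley $2$-cell between them is (homotopic to) the identity, not merely that the two $1$-morphisms are equivalent; this is true, but it is a slightly stronger statement than equality of the underlying spans.
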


\begin{remark}\label{span-adj}
    We recall that given $f\colon X \to Y$ in $\cX$, the corresponding morphisms $f_!$ and $f^*$ in $\Spanoh(\cX)$ are the spans
    \[\begin{tikzcd}[row sep=small]
        & X &&&& X \\
        X && Y && Y && X
        \arrow[Rightarrow, no head, from=1-2, to=2-1]
        \arrow["f", from=1-2, to=2-3]
        \arrow["f"', from=1-6, to=2-5]
        \arrow[Rightarrow, no head, from=1-6, to=2-7]
    \end{tikzcd}\]
    and the unit and counit maps exhibiting the adjunction $f_! \dashv f^*$ are the maps between spans
    \[\begin{tikzcd}[row sep=small]
        & X &&&& X \\
        X && X && Y && Y \\
        & {X \times_Y X} &&&& Y
        \arrow[Rightarrow, no head, from=1-2, to=2-3]
        \arrow["\Delta"{description}, from=1-2, to=3-2]
        \arrow["f"', from=1-6, to=2-5]
        \arrow["f", from=1-6, to=2-7]
        \arrow["f"{description}, from=1-6, to=3-6]
        \arrow[Rightarrow, no head, from=2-1, to=1-2]
        \arrow["{\pi_1}", from=3-2, to=2-1]
        \arrow["{\pi_2}"', from=3-2, to=2-3]
        \arrow[Rightarrow, no head, from=3-6, to=2-5]
        \arrow[Rightarrow, no head, from=3-6, to=2-7]
    \end{tikzcd}\]
\end{remark}

One of the advantages of having $\tA$ as a parameter (rather than restricting to $\tA = \tCat$) is that we can use this to describe not only the space of $\cX$-cocomplete $\tA$-objects, but rather their entire $2$-category.
In other words, it allows us to study $\cX$-cocontinuous morphisms.

\begin{cor}[{\cite[Corollary 2.3.5]{EH}}]\label{span-1.5-univ-fun}
    Pre-composition with the map $\cX^\op \to \Spanoh(\cX)$ induces an equivalence of $2$-categories
    \[
        \tFun(\Spanoh(\cX), \tA) \iso \tFun^{\cocpl}(\cX^\op, \tA)
    \]
\end{cor}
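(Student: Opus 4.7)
The plan is to bootstrap the space-level equivalence of \cref{span-1.5-univ} into a $2$-categorical equivalence by testing against an arbitrary $2$-category $\tB$ and invoking the $(\infty,2)$-categorical Yoneda lemma. Concretely, I will show that for every $2$-category $\tB$, pre-composition with $\cX^\op \to \Spanoh(\cX)$ induces an equivalence on mapping spaces
\[
    \Map(\tB, \tFun(\Spanoh(\cX), \tA)) \iso \Map(\tB, \tFun^{\cocpl}(\cX^\op, \tA)),
\]
naturally in $\tB$, from which the corollary follows by Yoneda.

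The first step uses the Cartesian closure of $\tCatt$: both sides of the displayed equivalence can be rewritten. The left-hand side transforms as
\[
    \Map(\tB, \tFun(\Spanoh(\cX), \tA)) \simeq \Map(\tB \times \Spanoh(\cX), \tA) \simeq \Map(\Spanoh(\cX), \tFun(\tB, \tA)).
\]
Applying \cref{span-1.5-univ} to the $2$-category $\tFun(\tB, \tA)$ in place of $\tA$ identifies this last space with $\Map^{\cocpl}(\cX^\op, \tFun(\tB, \tA))$. Similarly, the right-hand side is a subspace of $\Map(\tB, \tFun(\cX^\op, \tA)) \simeq \Map(\cX^\op, \tFun(\tB, \tA))$. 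It therefore remains to identify the subspace $\Map(\tB, \tFun^{\cocpl}(\cX^\op, \tA))$ with $\Map^{\cocpl}(\cX^\op, \tFun(\tB, \tA))$ under the exponential equivalence.

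The main (and only nontrivial) obstacle is this last identification, which rests on the standard fact that adjunctions in a functor $2$-category $\tFun(\tB, \tA)$ are detected pointwise with a naturality Beck--Chevalley condition: a natural transformation $\alpha\colon G_1 \Rightarrow G_2$ of functors $\tB \to \tA$ is a left adjoint in $\tFun(\tB, \tA)$ precisely when each $\alpha_b$ admits a left adjoint in $\tA$ and, for every $\beta\colon b \to b'$ in $\tB$, the induced mate with respect to the naturality square for $\alpha$ is invertible. Applying this to the natural transformation corresponding to $f\colon X \to Y$ in $\cX$, I will unpack both subspaces as the space of functors $G\colon \cX^\op \times \tB \to \tA$ satisfying:
(a) pointwise-in-$\tB$ existence of left adjoints to $f^*$ for each $f$ in $\cX$;
(b) invertibility of the Beck--Chevalley mate attached to each square built from $f \in \cX$ and $\beta \in \tB$;
(c) invertibility of the Beck--Chevalley mate attached to each pullback square in $\cX$.
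From the $\Map^{\cocpl}(\cX^\op, \tFun(\tB, \tA))$ side, (a) and (b) come from the existence of left adjoints for each $f^*$ as a morphism in $\tFun(\tB, \tA)$, and (c) is the pullback Beck--Chevalley condition in $\tFun(\tB, \tA)$, which by the pointwise formula is equivalent to (c). From the $\Map(\tB, \tFun^{\cocpl}(\cX^\op, \tA))$ side, (a) and (c) come from the pointwise $\cX$-cocompleteness of $G(-, b)$ for each $b \in \tB$, and (b) is exactly the requirement that $G(-, \beta)$ be $\cX$-cocontinuous for every $\beta \in \tB$. The two descriptions coincide.

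Chaining the three equivalences yields the desired natural equivalence of mapping spaces, and because the composite is manifestly implemented by pre-composition with $\cX^\op \to \Spanoh(\cX)$ (the space-level identification of \cref{span-1.5-univ} already is), the Yoneda lemma in $\tCatt$ upgrades this to an equivalence of $2$-categories, completing the proof.
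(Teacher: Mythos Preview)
Your proof is correct and follows essentially the same route as the paper: both argue by testing against an arbitrary $2$-category $\tB$, chaining the exponential adjunction with \cref{span-1.5-univ} applied to $\tFun(\tB,\tA)$, and then invoking the $2$-categorical Yoneda lemma. The only difference is that the paper outsources the identification $\Map^{\cocpl}(\cX^\op,\tFun(\tB,\tA)) \simeq \Map(\tB,\tFun^{\cocpl}(\cX^\op,\tA))$ to \cite[Proposition 2.1.5]{EH}, whereas you unpack it directly via the pointwise characterization of adjunctions in functor $2$-categories (essentially \cref{param-adj}); your conditions (a)--(c) are exactly what that cited proposition encodes.
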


\begin{proof}
    We repeat the proof from \cite[Corollary 2.3.5]{EH} for the reader's convenience.
    Recall from \cite[Proposition 2.1.5]{EH} that for any $2$-category $\tB$, the exponential adjunction restricts to an isomorphism of spaces
    \[
        \Map^{\cocpl}(\cX^\op, \tFun(\tB, \tA)) \simeq \Map(\tB, \tFun^{\cocpl}(\cX^\op, \tA)).
    \]
    Using this and the universal property from \cref{span-1.5-univ} we get
    \begin{align*}
        \Map(\tB, \tFun(\Spanoh(\cX), \tA))
        &\simeq \Map(\Spanoh(\cX), \tFun(\tB, \tA))\\
        &\simeq \Map^{\cocpl}(\cX^\op, \tFun(\tB, \tA))\\
        &\simeq \Map(\tB, \tFun^{\cocpl}(\cX^\op, \tA)).
    \end{align*}
    Since this holds for any $2$-category $\tB$, the result follows from the $2$-categorical Yoneda lemma \cite[6.2.7]{Hin}.
\end{proof}

\subsubsection{Free (Co)completeness}

In \cref{span-1.5-univ} above, we have described the universal $2$-category with an $\cX$-cocomplete object.
In the case of $\tA = \tCat$, there is a related result, describing the free $\cX$-cocomplete $\cX$-parameterized category.

\begin{defn}
    We define the $\cX$-parameterized category $\mdef{\cX_{/-}}\colon \cX^\op \to \tCat$.
    Namely, the functor sending $X \in \cX$ to the over-category $\cX_{/X}$, and a map $f\colon X \to Y$ to the pullback functor $f^*\colon \cX_{/Y} \to \cX_{/X}$ sending $Z \to Y$ to the pullback of $Z \to Y \gets X$.
\end{defn}

It is easy to see that the functor $f^*$ admits a left adjoint $f_!$ given by post-composition with $f$, namely $f_!(Z \to X) = Z \to Y \to X$.
Martini--Wolf \cite[Theorem 7.1.13]{MW} showed that more is true: $\cX_{/-}$ is the free $\cX$-cocomplete $\cX$-parameterized category (see also \cite[Proposition 2.20]{CLL} for a restatement somewhat closer to our formulation, and note that one needs to take $\clB = \PSh(\cX)$ and $\clQ$ to contain only the representable morphisms, i.e.\ those in the image of the Yoneda embedding $\cX \to \PSh(\cX)$).
We state a somewhat weaker result, only concerning the value at the terminal object in $\cX$ (assuming it exists), but which avoids mentioning internal homs, and suffices for our purposes.
Here $\Fun^{\cocpl}_\cX(\CC, \DD)$ denotes the hom category in $\tFun^{\cocpl}(\cX^\op, \tCat)$.
We shall give an alternative proof based on \cref{span-1.5-univ-fun}, and refer the reader to \cite{MW} for a proof of the full statement.

\begin{prop}[{\cite[Theorem 7.1.13]{MW}}]\label{free-cocpl}
    Assume that $\cX$ has a terminal object $\pt$.
    Then, the $\cX$-parameterized category $\cX_{/-}$ is the free $\cX$-cocomplete $\cX$-parameterized category.
    Namely, for any $\cX$-cocomplete $\cX$-parameterized category $\CC\colon \cX^\op \to \tCat$, evaluation at the object $\pt \to \pt \in \cX_{/\pt}$ induces an equivalence
    \[
        \Fun^{\cocpl}_\cX(\cX_{/-}, \CC) \iso \CC_\pt
        \qin \tCat.
    \]
\end{prop}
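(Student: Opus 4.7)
The plan is to deduce this from the universal property of spans established in \cref{span-1.5-univ-fun}, combined with the $2$-categorical Yoneda lemma. Taking $\tA = \tCat$ in \cref{span-1.5-univ-fun}, pre-composition with $\cX^\op \to \Spanoh(\cX)$ yields an equivalence of $2$-categories $\tFun(\Spanoh(\cX), \tCat) \iso \tFun^{\cocpl}(\cX^\op, \tCat)$, which in particular induces equivalences on hom categories. The key step is to identify $\cX_{/-}$ with the image, under this equivalence, of the representable $2$-functor $\Map_{\Spanoh(\cX)}(\pt, -)\colon \Spanoh(\cX) \to \tCat$; then Yoneda will finish the job.

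For the identification, a $1$-morphism from $\pt$ to $X$ in $\Spanoh(\cX)$ is a span $\pt \leftarrow Z \to X$, which is determined by its right leg $Z \to X$, and the analogous observation for $2$-morphisms gives a natural equivalence of categories $\Map_{\Spanoh(\cX)}(\pt, X) \simeq \cX_{/X}$. The functoriality along $\cX^\op \to \Spanoh(\cX)$ amounts to post-composition with the spans $Y \xleftarrow{f} X \xrightarrow{\mathrm{id}_X} X$ representing $f^*$ (see \cref{span-adj}), which is computed by pullback in $\Spanoh(\cX)$ and so sends $Z \to Y$ to $Z \times_Y X \to X$. This recovers precisely the pullback functors comprising $\cX_{/-}$, completing the identification.

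Having established this, the $2$-categorical Yoneda lemma \cite[6.2.7]{Hin} provides, for any $2$-functor $F\colon \Spanoh(\cX) \to \tCat$, an equivalence of categories $\Nat(\Map_{\Spanoh(\cX)}(\pt, -), F) \iso F(\pt)$, sending a natural transformation $\eta$ to its value $\eta_\pt(\mathrm{id}_\pt)$ at the identity $1$-morphism at $\pt$. Applying this to the $2$-functor $F$ corresponding to $\CC$ under the equivalence of \cref{span-1.5-univ-fun}, so that $F(\pt) = \CC_\pt$, and noting that $\mathrm{id}_\pt$ corresponds under our identification to the object $\pt \to \pt \in \cX_{/\pt}$, I obtain the desired equivalence $\Fun^{\cocpl}_\cX(\cX_{/-}, \CC) \iso \CC_\pt$, given by evaluation at $\pt \to \pt$. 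The main obstacle is the identification step: making the equivalence $\Map_{\Spanoh(\cX)}(\pt, -)|_{\cX^\op} \simeq \cX_{/-}$ precise at the full coherent $2$-categorical level requires tracing through Haugseng's construction carefully, though conceptually the identification is essentially by definition.
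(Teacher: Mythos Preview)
Your proposal is correct and follows essentially the same route as the paper: identify $\cX_{/-}$ with the representable $\Hom_{\Spanoh(\cX)}(\pt,-)$, transport along the equivalence of \cref{span-1.5-univ-fun}, and apply the $2$-categorical Yoneda lemma. The only substantive difference is that where you flag the coherent identification $\Hom_{\Spanoh(\cX)}(\pt,X)\simeq\cX_{/X}$ as an obstacle, the paper simply cites it as a known computation of hom categories in $\Spanoh(\cX)$ (namely $\Hom_{\Spanoh(\cX)}(X,Y)\simeq\cX_{/X\times Y}$, from \cite[Proposition 8.3]{spans}, \cite[Theorem 2.3.22]{EH}, or \cite[Remark 3.1.10]{Stef}), so your concern is already resolved in the literature.
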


\begin{proof}
    Recall that the hom category in $\Spanoh(\cX)$ is given by the over-category
    \[ 
        \Hom_{\Spanoh(\cX)}(X, Y) \simeq \cX_{/X \times Y},
    \]
    for example by \cite[Proposition 8.3]{spans} (by taking $k=2$ and restricting to the $2$-subcategory whose $2$-morphisms are spans between spans where the wrong-way map is the identity), or \cite[Theorem 2.3.22]{EH}, or \cite[Remark 3.1.10]{Stef}.
    Taking the source to be $\pt \in \Spanoh(\cX)$, we obtain a natural equivalence
    \[
        \cX_{/-} \simeq \Hom_{\Spanoh(\cX)}(\pt, -)\colon \Spanoh(\cX) \too \tCat.
    \]
    In particular, we see that $\cX_{/-}$ lifts to a $2$-functor from $\Spanoh(\cX)$, hence, by the universal property from \cref{span-1.5-univ}, it is $\cX$-cocomplete.

    Since $\CC$ is $\cX$-cocomplete, it lifts to $\Spanoh(\cX)$ too by \cref{span-1.5-univ}.
    Using \cref{span-1.5-univ-fun}, the equivalence above, and the $2$-categorical Yoneda lemma \cite[6.2.7]{Hin}, we conclude that
    \begin{align*}
        \Fun^{\cocpl}_\cX(\cX_{/-}, \CC)
        &\simeq \Fun_{\Spanoh(\cX)}(\cX_{/-}, \CC)\\
        &\simeq \Fun_{\Spanoh(\cX)}(\Hom_{\Spanoh(\cX)}(\pt, -), \CC)\\
        &\simeq \CC_\pt.
    \end{align*}
\end{proof}

\subsection{Parameterized Semiadditivity}

\subsubsection{Definitions and Basic Properties}

Parameterized semiadditivity is originally due to \cite[Construction 4.1.8]{HL}, and we refer the reader to \cite[\S3]{CLL} for a recent discussion closer to our perspective.
These sources consider the ambidexterity only of truncated morphisms, and define it inductively on the truncation level.
Nevertheless, there is a non-inductive definition, which also applies to all morphisms, and appears for instance in \cite[Definition 3.5]{TwA} (though in the symmetric monoidal context).
In our applications we shall only consider the truncated case, but we find the non-inductive definition more natural and easier to handle.
We also generalize it to the case of an arbitrary $2$-category in place of $\tCat$.

\begin{defn}\label{Nm-def}
    Let $\CC\colon \cX^\op \to \tA$ be $\cX$-complete and $\cX$-cocomplete, let $f\colon X \to Y$, and consider the commutative diagram
    \[\begin{tikzcd}
        X \\
        & {X \times_Y X} & X \\
        & X & Y
        \arrow["\Delta", from=1-1, to=2-2]
        \arrow[curve={height=-18pt}, Rightarrow, no head, from=1-1, to=2-3]
        \arrow[curve={height=18pt}, Rightarrow, no head, from=1-1, to=3-2]
        \arrow["{\pi_2}", from=2-2, to=2-3]
        \arrow["{\pi_1}"', from=2-2, to=3-2]
        \arrow["\lrcorner"{anchor=center, pos=0.125}, draw=none, from=2-2, to=3-3]
        \arrow["f", from=2-3, to=3-3]
        \arrow["f"', from=3-2, to=3-3]
    \end{tikzcd}\]
    We define the \tdef{dualizing map} to be $\mdef{D_f} := \pi_{1*} \Delta_!\colon \CC_X \to \CC_X$.
    We define the (twisted) \tdef{norm map} $\mdef{\Nm_f}\colon f_! D_f \to f_*$ to be the composition
    \[
        f_! D_f
        := f_! \pi_{1*} \Delta_!
        \too[u^f_*] f_* f^* f_! \pi_{1*} \Delta_!
        \too[\beta_!^{-1}] f_* \pi_{2!} \pi_1^* \pi_{1*} \Delta_!
        \too[c^{\pi_1}_*] f_* \pi_{2!} \Delta_!
        \simeq f_*,
    \]
    where $u^f_*$ is the unit of the adjunction $f^* \dashv f_*$, $\beta_!$ is the Beck--Chevalley map corresponding to square in the diagram above, and $c^{\pi_1}_*$ is the counit of the adjunction $\pi_1^* \dashv \pi_{1*}$.
    We say that $f$ is (twisted) \tdef{$\CC$-ambidexterous} if $\Nm_f$ is an isomorphism.
\end{defn}

We connect this definition to the inductive definition.

\begin{prop}\label{ambi-defs}
    Let $\CC\colon \cX^\op \to \tA$ be $\cX$-complete and $\cX$-cocomplete, and let $m \geq -2$.
    If all $(m-1)$-truncated morphisms are $\CC$-ambidexterous, then for any $m$-truncated morphism $f\colon X \to Y$, there is a canonical isomorphism $D_f \simeq \Id_{\CC_X}$, and in particular the map $\Nm_f$ has source canonically isomorphic to $f_!$.
\end{prop}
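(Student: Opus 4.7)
The plan is to proceed by induction on the truncation level $m$, reducing the computation of $D_f$ to the (recursively inherited) triviality of the dualizing map of the diagonal via the fundamental composition identity $\pi_1 \circ \Delta = \mathrm{id}_X$.

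For the base case $m = -2$, the morphism $f$ is an equivalence, and hence so are $\Delta$ and $\pi_1$. Then $\Delta_!$ and $\Delta_*$ both coincide with a quasi-inverse of $\Delta^*$, and the composition identity directly gives a canonical chain
\[
  D_f = \pi_{1*}\Delta_! \simeq \pi_{1*}\Delta_* \simeq (\pi_1\Delta)_* = \Id_{\CC_X}.
\]

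For the inductive step $m \geq -1$, the diagonal $\Delta\colon X \to X \times_Y X$ is $(m-1)$-truncated, hence $\CC$-ambidexterous by hypothesis, which furnishes an isomorphism $\Nm_\Delta\colon \Delta_! D_\Delta \iso \Delta_*$. Since every $(m-2)$-truncated morphism is in particular $(m-1)$-truncated and therefore also ambidexterous, the inductive statement applies at level $m-1$ to the diagonal $\Delta$, producing a canonical isomorphism $D_\Delta \simeq \Id_{\CC_X}$. Combining these two isomorphisms yields a canonical $\Delta_! \iso \Delta_*$, and post-composing with $\pi_{1*}$ while using $\pi_1 \Delta = \mathrm{id}_X$ gives the desired canonical
\[
  D_f = \pi_{1*}\Delta_! \iso \pi_{1*}\Delta_* \simeq (\pi_1\Delta)_* = \Id_{\CC_X}.
\]

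The main obstacle is not the overall strategy but the precise coherence of canonicity in the $2$-categorical setting: one must verify that the constructed isomorphism $D_f \simeq \Id_{\CC_X}$ interacts correctly with the definition of $\Nm_f$ from \cref{Nm-def}, so that the "in particular" clause becomes meaningful—namely, that under this identification the twisted norm $\Nm_f\colon f_! D_f \to f_*$ is identified with the untwisted norm $f_! \to f_*$ expected at the next stage of the inductive theory. This amounts to a careful diagram chase matching the units, counits, and Beck--Chevalley maps entering the construction of $\Nm_f$ against the canonical trivialization data coming from the composition identity $\pi_1 \Delta = \mathrm{id}_X$ and the norm $\Nm_\Delta$.
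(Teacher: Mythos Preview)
Your proof is correct and follows essentially the same inductive strategy as the paper: use that $\Delta$ is $(m-1)$-truncated, combine its ambidexterity with the inductive identification $D_\Delta \simeq \Id$ to get $\Delta_! \simeq \Delta_*$, and then use $\pi_1\Delta = \mathrm{id}_X$ to conclude $D_f = \pi_{1*}\Delta_! \simeq \pi_{1*}\Delta_* \simeq \Id$. You are in fact slightly more precise than the paper in separating the two ingredients (the hypothesis gives that $\Nm_\Delta$ is invertible, the inductive step gives $D_\Delta \simeq \Id$), whereas the paper folds both into a single appeal to ``the inductive hypothesis''; your final paragraph on coherence goes beyond what the paper verifies, as the ``in particular'' clause is taken there simply as the immediate consequence $f_! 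D_f \simeq f_!$.
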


\begin{proof}
    We prove this inductively.
    The case $m=-2$ is trivial, as $f$, $\pi_1$, $\pi_2$ and $\Delta$ are the identity maps.
    For $m > -2$, let $f\colon X \to Y$ be an $m$-truncated morphism in $\cX$.
    By definition, the diagonal $\Delta$ is $(m-1)$-truncated, thus, by the inductive hypothesis, the norm map induces an isomorphism $\Nm_{\Delta}\colon \Delta_! \iso \Delta_*$.
    We conclude that
    \[
        D_f = \pi_{1*} \Delta_! \iso \pi_{1*} \Delta_* \simeq \Id_{\CC_X},
    \]
    as required.
\end{proof}

With the definition of $\CC$-ambidexterity in hand, we can now define $\cX$-semiadditivity.

\begin{defn}
    We say that an $\cX$-complete and $\cX$-cocomplete $\CC\colon \cX^\op \to \tA$ is \tdef{$\cX$-semiadditive} if all $f\colon X \to Y$ are $\CC$-ambidexterous, i.e.\ the associated norm map $\Nm_f\colon f_! D_f \to f_*$ is an isomorphism.

    We say that an $\cX$-parameterized map between $\cX$-semiadditive $\tA$-objects $F\colon \CC \to \DD$ is \tdef{$\cX$-semiadditive} if it is $\cX$-cocontinuous and $\cX$-continuous.

    We thus denote by $\tFun^{\psa}(\cX^\op, \tA) \subset \tFun^{\cpl,\cocpl}(\cX^\op, \tA)$ the $2$-subcategory on the $\cX$-semiadditive objects and all morphisms.
\end{defn}

\cref{ambi-defs} shows that our definition of $\cX$-semiadditivity is a direct generalization of the ordinary definition of higher semiadditivity of \cite{HL} from the truncated case.

\begin{example}\label{can-param-sa}
    Let $\cX = \Spacespi$ be the collection of $\pi$-finite spaces and $\tA = \tCat$.
    Then, for a (non-parameterized) category $\CC$, the canonically parameterized category $\CC\colon \Spacespi^\op \to \tCat$ of \cref{can-param} is $\Spacespi$-parameterized $\Spacespi$-semiadditive if and only if $\CC$ is $\infty$-semiadditive.
\end{example}

Note that our definition of $\cX$-semiadditive \emph{morphisms} is simply about being $\cX$-continuous and $\cX$-cocontinuous, and does not involve the norm maps, similarly to the non-parameterized case.
It not a priori clear that we have defined the ``correct'' ($1$- or higher) morphisms in the $2$-category of $\cX$-semiadditive objects.
We shall now prove \cref{inm-exp-adj}, stating that it is indeed the case.

Recall from \cite[Proposition 2.1.5]{EH} that for a category $\cX$ and $2$-categories $\tA$ and $\tB$, the exponential adjunction
\[
    \Map(\cX^\op, \tFun(\tB, \tA)) \simeq \Map(\tB, \tFun(\cX^\op, \tA))
\]
restricts to an isomorphism of spaces on $\cX$-complete and $\cX$-cocomplete objects
\[
    \Map^{\cpl,\cocpl}(\cX^\op, \tFun(\tB, \tA)) \simeq \Map(\tB, \tFun^{\cpl,\cocpl}(\cX^\op, \tA)),
\]
showing that $\tFun^{\cpl,\cocpl}(\cX^\op, \tA)$ captures the ``correct'' $2$-category of $\cX$-complete and $\cX$-cocomplete $\tA$-objects and (higher) morphisms between them.
Using this we deduce the corresponding statement for $\cX$-semiadditive objects.

\begin{lem}\label{inm-exp-adj}
    The exponential adjunction restricts to an isomorphism of spaces
    \[
        \Map^{\psa}(\cX^\op, \tFun(\tB, \tA)) \simeq \Map(\tB, \tFun^{\psa}(\cX^\op, \tA)).
    \]
\end{lem}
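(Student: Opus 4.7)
The plan is to leverage the cited result \cite[Proposition 2.1.5]{EH} (reproduced just above the lemma), which already establishes that the exponential adjunction restricts to an isomorphism
\[
    \Map^{\cpl,\cocpl}(\cX^\op, \tFun(\tB, \tA)) \simeq \Map(\tB, \tFun^{\cpl,\cocpl}(\cX^\op, \tA)).
\]
Since $\tFun^{\psa}(\cX^\op, \tA)$ is by definition the full $2$-subcategory of $\tFun^{\cpl,\cocpl}(\cX^\op, \tA)$ on the $\cX$-semiadditive objects, it suffices to match, under this correspondence, the condition ``$\CC \in \Map^{\psa}$'' on a parameterized $\tFun(\tB, \tA)$-object $\CC\colon \cX^\op \to \tFun(\tB, \tA)$ with the pointwise condition that for every $b \in \tB$ the image $\widetilde{\CC}(b)\colon \cX^\op \to \tA$ under the transpose is $\cX$-semiadditive.

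The main technical input is that for each $b \in \tB$, evaluation at $b$ is a $2$-functor $\ev_b\colon \tFun(\tB, \tA) \to \tA$. Any adjunction in $\tFun(\tB, \tA)$ restricts under $\ev_b$ to an adjunction in $\tA$ with evaluated units and counits, and likewise Beck--Chevalley transformations are sent to Beck--Chevalley transformations. Consequently, since the norm map $\Nm_f$ from \cref{Nm-def} is assembled purely from adjunction units, counits, and Beck--Chevalley maps, the norm map for $\CC$ formed in $\tFun(\tB, \tA)$ evaluates at $b$ to the norm map for $\widetilde{\CC}(b)$ formed in $\tA$. The same holds starting from $\widetilde{\CC}\colon \tB \to \tFun^{\cpl,\cocpl}(\cX^\op, \tA)$, since the adjoints $f_!\dashv f^*\dashv f_*$ witnessing $\cX$-(co)completeness of $\CC$ in the functor $2$-category $\tFun(\tB, \tA)$ are precisely the pointwise adjoints on the $\widetilde{\CC}(b)$ packaged into $2$-natural data.

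To finish, I invoke the standard fact that a $2$-morphism in $\tFun(\tB, \tA)$ is invertible if and only if it is pointwise invertible in $\tA$. Applied to $\Nm_f$, this yields that $\Nm_f$ is an isomorphism for $\CC$ if and only if $\Nm_f$ is an isomorphism for each $\widetilde{\CC}(b)$. Quantifying over all $f\colon X \to Y$ in $\cX$ matches the $\cX$-semiadditivity of $\CC$ with the pointwise $\cX$-semiadditivity of $\widetilde{\CC}$, giving the claimed restriction of the exponential adjunction.

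The main obstacle I anticipate is purely bookkeeping: one must verify that the chosen $2$-categorical ingredients (the three adjunctions of \cref{Nm-def}, the Beck--Chevalley isomorphism $\beta_!$, and the composition defining $\Nm_f$) are genuinely compatible with the transpose of the exponential adjunction. This is formal once one records that adjunctions in a functor $2$-category $\tFun(\tB, \tA)$ are transported to $\tFun(\cX^\op, \tA)$ precisely by the already-handled $\cX$-(co)completeness step, so the argument should reduce to unwinding these compatibilities rather than proving anything substantially new.
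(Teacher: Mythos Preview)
Your proposal is correct and follows essentially the same approach as the paper: reduce to the $\cX$-complete and $\cX$-cocomplete case via \cite[Proposition 2.1.5]{EH}, then use that evaluation at $b$ is a $2$-functor to identify the norm map for $\CC$ with the pointwise norm maps for $\CC_b$, and conclude via pointwise detection of invertibility of $2$-morphisms in $\tFun(\tB,\tA)$. The paper packages the last step by citing \cite[Lemma B.2.9]{MGS}, but otherwise the arguments are the same.
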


\begin{proof}
    As mentioned above, the corresponding claim for $\cX$-complete and $\cX$-cocomplete objects holds by \cite[Proposition 2.1.5]{EH}.
    It remains to show that it restricts to $\cX$-semiadditive objects.
    Namely, given an $\cX$-complete and $\cX$-cocomplete $\tFun(\tB, \tA)$-object $\CC\colon \cX^\op \to \tFun(\tB, \tA)$, we need to show that it is $\cX$-semiadditive if and only if for every $b \in \tB$ the $\cX$-parameterized $\tA$-object $\CC_b\colon \cX^\op \to \tA$ is $\cX$-semiadditive.
    To that end, it suffices to show that for every $f\colon X \to Y$ in $\cX$, the norm map associated to $\CC$ is an isomorphism if and only if the norm map associated to $\CC_b$ is an isomorphism for every $b \in \tB$.
    Since evaluation at $b$ is a $2$-functor $\tFun(\tB, \tA) \to \tA$, it sends the norm map associated to $\CC$ to the norm map associated to $\CC_b$, and the claim now follows from \cite[Lemma B.2.9]{MGS}.
\end{proof}

The next observation is that if $\cX$ is truncated, then a morphism between $\cX$-semiadditive $\tA$-objects is $\cX$-cocontinuous if and only if it is $\cX$-continuous.

\begin{prop}\label{cocont-sa}
    Assume that $\cX$ is truncated.
    Let $\CC$ and $\DD$ be $\cX$-semiadditive, and let $F\colon \CC \to \DD$ be a map between them.
    If $F$ is $\cX$-cocontinuous, then $F$ is $\cX$-semiadditive.
\end{prop}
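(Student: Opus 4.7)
The plan is to show that $F$ is $\cX$-continuous, i.e.\ that for every $f\colon X \to Y$ in $\cX$, the continuity Beck--Chevalley map $F_Y f_* \to f_* F_X$ is an isomorphism. Since $\cX$ is truncated, every morphism is $m$-truncated for some $m \geq -2$, so I induct on $m$. The base case $m=-2$ is trivial, as $f$ is then an equivalence and $f_*$ is its inverse.

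For the inductive step, assume the claim holds for all $(m-1)$-truncated morphisms, and let $f\colon X \to Y$ be $m$-truncated. Then the diagonal $\Delta\colon X \to X\times_Y X$ and the first projection $\pi_1\colon X\times_Y X \to X$ are both $(m-1)$-truncated. By the $\cX$-semiadditivity of $\CC$ and $\DD$ combined with \cref{ambi-defs}, the dualizing map satisfies $D_f \simeq \Id$ in both, so the norm map yields canonical isomorphisms $f_* \simeq f_!\,\pi_{1*}\,\Delta_!$ in each of $\CC$ and $\DD$. By the cocontinuity hypothesis on $F$, the cocontinuity BC maps at $f_!$ and $\Delta_!$ are isomorphisms, and by the inductive hypothesis applied to $\pi_1$, the continuity BC map at $\pi_{1*}$ is an isomorphism. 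Chaining these along the decomposition $f_* \simeq f_!\,\pi_{1*}\,\Delta_!$ produces a composite isomorphism between $F_Y f_*$ and $f_* F_X$.

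The main obstacle, and the subtlest step, is to verify that this composite isomorphism is literally the continuity Beck--Chevalley map $F_Y f_* \to f_* F_X$, so that the latter is an iso (and not just that some isomorphism between these two functors exists). This amounts to a $2$-categorical compatibility: unfolding the continuity BC map via the counit for $f^*\dashv f_*$, and rewriting $f_*$ using the explicit definition of $\Nm_f$ in \cref{Nm-def} (itself built from the units and counits for $f_!\dashv f^*$, $\pi_1^*\dashv \pi_{1*}$, and $\Delta_!\dashv\Delta^*$), one reduces the identification to the pseudonaturality of $F$ with respect to the pullbacks $f^*$, $\pi_1^*$, $\Delta^*$ and their adjoints. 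This is a formal but bookkeeping-heavy $2$-diagram chase in $\tA$. Alternatively, I would invoke \cref{inm-exp-adj} to reduce to the representable case and then to $\tA = \tCat$, where the claim admits a pointwise verification in terms of $\pi$-finite (co)limits.
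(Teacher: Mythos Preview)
Your inductive strategy is the right one, but there is a genuine gap in the inductive step: the projection $\pi_1\colon X\times_Y X \to X$ is \emph{not} $(m-1)$-truncated. It is the base-change of $f$ along itself, hence only $m$-truncated. For a concrete failure, take $f\colon \pt \to \BB G$ for a nontrivial finite group $G$ (so $f$ is $0$-truncated); then $\pi_1\colon G \to \pt$ is $0$-truncated, not $(-1)$-truncated. Consequently you cannot invoke the inductive hypothesis to get the continuity Beck--Chevalley isomorphism at $\pi_{1*}$, and the chain $f_*\simeq f_!\,\pi_{1*}\,\Delta_!$ does not reduce to strictly lower truncation as you claim.

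The standard argument (which the paper defers to, citing \cite[Corollary 3.17]{CLL} and \cite[Corollary 3.2.4]{TeleAmbi}, and noting it works verbatim in any $2$-category $\tA$) restructures the induction: one shows inductively that $F$ \emph{intertwines the norm maps}, i.e.\ that the square built from $F_Y(\Nm_f^\CC)$, $\Nm_f^\DD F_X$, and the two Beck--Chevalley maps commutes. After identifying $D_f\simeq\Id$, the map $\Nm_f$ is assembled from pullback functors, $!$-pushforwards, and $\Nm_\Delta^{-1}$; since $\Delta$ \emph{is} $(m-1)$-truncated, the inductive hypothesis handles $\Nm_\Delta$, while cocontinuity and pseudonaturality of $F$ handle the rest. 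Once the square commutes, invertibility of $\Nm_f^\CC$, $\Nm_f^\DD$, and the cocontinuity BC map forces the continuity BC map to be invertible. Your alternative of reducing to $\tA=\tCat$ via \cref{inm-exp-adj} does not circumvent the issue: the same truncation error would arise there.
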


\begin{proof}
    The proof for the case $\tA = \tCat$ from \cite[Corollary 3.17]{CLL} works in verbatim for general $\tA$ (and is itself identical to the non-parameterized case from \cite[Corollary 3.2.4]{TeleAmbi}).
\end{proof}

In particular, we see that an $\cX$-parameterized left adjoint is automatically $\cX$-semiadditive.

\begin{prop}\label{left-sa}
    Assume that $\cX$ is truncated.
    Let $\CC$ and $\DD$ be $\cX$-semiadditive, and let $L\colon \CC \to \DD$ be a map between them.
    If $L$ is $\cX$-parameterized left adjoint, then it is $\cX$-semiadditive.
\end{prop}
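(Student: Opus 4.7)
The plan is to assemble this as a short corollary of the two preceding propositions, \cref{pla-cond} and \cref{cocont-sa}, with no further work required. The key observation is that $\cX$-semiadditivity of a morphism is defined as being simultaneously $\cX$-cocontinuous and $\cX$-continuous, while \cref{cocont-sa} tells us that, under the truncation hypothesis on $\cX$ and the assumption that source and target are $\cX$-semiadditive, $\cX$-cocontinuity alone already implies $\cX$-continuity. So it suffices to extract $\cX$-cocontinuity from the hypothesis that $L$ is an $\cX$-parameterized left adjoint.

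First, I would note that since $\CC$ and $\DD$ are $\cX$-semiadditive, they are in particular $\cX$-cocomplete, so \cref{pla-cond} applies. By that proposition, the fact that $L$ is a left adjoint in the $2$-category $\tFun(\cX^\op, \tA)$ immediately gives that $L$ is $\cX$-cocontinuous (and moreover each component $L_X$ is a left adjoint in $\tA$, though we do not need this).

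Then, invoking \cref{cocont-sa} with the truncation hypothesis on $\cX$ and the $\cX$-semiadditivity of $\CC$ and $\DD$, the $\cX$-cocontinuous map $L$ is automatically $\cX$-semiadditive. Since all the real content sits in \cref{cocont-sa}, there is no obstacle to overcome here; the proposition is a packaging lemma that isolates the useful special case of a parameterized left adjoint, which is how the result will be applied later (for instance, when identifying the categorified transchromatic character map as a composition of parameterized adjoints, each automatically semiadditive).
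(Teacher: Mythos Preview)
Your proposal is correct and follows essentially the same approach as the paper: invoke \cref{pla-cond} to deduce $\cX$-cocontinuity from the left adjoint hypothesis, then apply \cref{cocont-sa} to upgrade this to $\cX$-semiadditivity. Your write-up even makes explicit the implicit step that $\cX$-semiadditivity of $\CC$ and $\DD$ ensures their $\cX$-cocompleteness, so that \cref{pla-cond} applies.
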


\begin{proof}
    By \cref{pla-cond}, an $\cX$-parameterized left adjoint is $\cX$-cocontinuous, thus by \cref{cocont-sa} it is $\cX$-semiadditive.
\end{proof}

\subsubsection{Universal Semiadditivity}

Recall that in \cref{span-1.5-univ} we have describe the universal $2$-category with an $\cX$-cocomplete object.
We now concern ourselves with the universal $2$-category with an $\cX$-semiadditive object.
It is in fact easy to show that there is such a universal $2$-category, while the difficulty lies in describing it explicitly, a situation which is completely analogues to the case of $\cX$-cocompleteness and $\Spanoh(\cX)$ as is explained in \cite[\S2.3]{EH}.

We begin by constructing the universal $2$-category with an $\cX$-semiadditive object (note that the following does not require us to assume that $\cX$ is truncated).

\begin{prop}\label{usa-univ}
    There exists a $2$-category $\mdef{\Usa(\cX)}$ and a map $\cX^\op \to \Usa(\cX)$ exhibiting it as the universal $2$-category with an $\cX$-semiadditive object.
    That is, pre-composition with the map $\cX^\op \to \Usa(\cX)$ induces an isomorphism of spaces
    \[
        \Map(\Usa(\cX), \tA) \iso \Map^{\psa}(\cX^\op, \tA).
    \]
\end{prop}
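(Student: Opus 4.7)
The plan is to represent the functor $F\colon \tCatt \to \Spaces$ defined by $F(\tA) := \Map^{\psa}(\cX^\op, \tA)$. Given such a representing pair $(\Usa(\cX), \iota)$, the Yoneda lemma produces the universal map $\iota\colon \cX^\op \to \Usa(\cX)$ and the desired natural isomorphism.

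The key step is to verify that $F$ preserves limits. Given a limit $\tA \simeq \lim_i \tA_i$ in $\tCatt$, the exponential $\tFun(\cX^\op, \tA)$ is computed as the pointwise limit $\lim_i \tFun(\cX^\op, \tA_i)$. Each of the conditions defining $\cX$-semiadditivity---existence of left and right adjoints to $f^*$, Beck--Chevalley invertibility for pullback squares in $\cX$, and invertibility of the norm 2-morphism $\Nm_f$ of \cref{Nm-def}---is detected after projection to each factor $\tA_i$, by the pointwise detection of adjunctions and invertible 2-cells (compare the final paragraph of the proof of \cref{inm-exp-adj}, invoking \cite[Lemma B.2.9]{MGS}). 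Hence $F(\tA) \simeq \lim_i F(\tA_i)$, and combined with accessibility of $F$ and presentability of $\tCatt$, the adjoint functor theorem produces the representing object $\Usa(\cX)$ together with the universal map $\cX^\op \to \Usa(\cX)$.

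The main obstacle is the formal verification of accessibility in the large 2-category $\tCatt$. A more concrete route, which avoids invoking the adjoint functor theorem as a black box, is to construct $\Usa(\cX)$ in two stages. First, the universal $\cX$-complete and $\cX$-cocomplete 2-category $\Ucplcocpl(\cX)$ can be built either by the analogous representability argument, or, by analogy with \cref{span-1.5-univ}, by freely adjoining right adjoints to each morphism $f^*$ in $\Spanoh(\cX)$ subject to the relevant Beck--Chevalley isomorphisms. Second, define $\Usa(\cX)$ to be the 2-categorical localization of $\Ucplcocpl(\cX)$ at the collection of universal norm 2-morphisms $\Nm_f^{\mrm{univ}}\colon f_! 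D_f \to f_*$ obtained, for each morphism $f$ of $\cX$, by applying the construction of \cref{Nm-def} to the universal object. Unwinding in stages: a 2-functor out of $\Usa(\cX)$ corresponds to a 2-functor out of $\Ucplcocpl(\cX)$ that inverts every $\Nm_f^{\mrm{univ}}$, which in turn corresponds to an $\cX$-complete and $\cX$-cocomplete object whose norm 2-morphisms are all invertible, i.e.\ an $\cX$-semiadditive object, as required.
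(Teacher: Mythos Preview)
Your second, ``more concrete'' route is essentially the paper's argument: construct $\Ucplcocpl(\cX)$ first, then localize at the universal norm 2-morphisms. The point where the paper is sharper is the construction of $\Ucplcocpl(\cX)$ itself. Your two suggestions---the analogous representability argument (which inherits the same accessibility issue you flag for the first route) and ``freely adjoining right adjoints subject to Beck--Chevalley'' (which is not a standard construction and would itself need justification)---are both left imprecise. The paper instead takes the pushout in $\tCatt$ of the diagram $\Spanoh(\cX) \leftarrow \cX^\op \to \Spanoh(\cX)^{\co}$; since \cref{span-1.5-univ} already identifies $\Spanoh(\cX)$ as the universal 2-category with an $\cX$-cocomplete object, and dually $\Spanoh(\cX)^{\co}$ is universal for $\cX$-completeness, the pushout immediately has the required universal property with no further work. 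Your first route via the adjoint functor theorem is a genuinely different approach, but as you acknowledge, the accessibility verification is not carried out; the pushout-then-localize construction sidesteps this entirely.
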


\begin{proof}
    First, consider the pushout
    \[\begin{tikzcd}
        {\cX^\op} & {\Spanoh(\cX)} \\
        {\Spanoh(\cX)^{\co}} & {\Ucplcocpl(\cX)}
        \arrow[from=1-1, to=1-2]
        \arrow[from=1-1, to=2-1]
        \arrow[from=1-2, to=2-2]
        \arrow[from=2-1, to=2-2]
        \arrow["\lrcorner"{anchor=center, pos=0.125, rotate=180}, draw=none, from=2-2, to=1-1]
    \end{tikzcd}\]
    By \cref{span-1.5-univ}, the map $\cX^\op \to \Spanoh(\cX)$ exhibits the target as the universal $2$-category with an $\cX$-cocomplete object.
    Dually, $\cX^\op \to \Spanoh(\cX)^{\co}$ exhibits the target as the universal $2$-category with an $\cX$-complete object.
    Therefore, the canonical map $\cX^\op \to \Ucplcocpl(\cX)$ exhibits the target as the universal $2$-category with an $\cX$-complete and $\cX$-cocomplete object, that is, it induces an isomorphism
    \[
        \Map(\Ucplcocpl(\cX), \tA) \iso \Map^{\cpl,\cocpl}(\cX^\op, \tA).
    \]

    In \cref{Nm-def} we have defined the norm maps associated to an $\cX$-complete and $\cX$-cocomplete object.
    Applying this to $\cX^\op \to \Ucplcocpl(\cX)$, we get a collection of $2$-morphisms $N := \{\Nm_f\}$ in $\Ucplcocpl(\cX)$, ranging over all $1$-morphisms in $\cX$.
    Then, the localization $\Usa(\cX) := \Ucplcocpl(\cX)[N^{-1}]$, equipped with the canonical map $\cX^\op \to \Ucplcocpl(\cX) \to \Usa(\cX)$, is by definition the universal $2$-category with an $\cX$-semiadditive object.
\end{proof}

Following the exact same strategy as in the case of $\Spanoh(\cX)$ from \cref{span-1.5-univ-fun}, varying the $2$-category $\tA$ allows us to describe the entire $2$-category of $\cX$-semiadditive objects and $\cX$-semiadditive morphisms between them.

\begin{cor}\label{usa-univ-fun}
    Pre-composition with the map $\cX^\op \to \Usa(\cX)$ induces an equivalence of $2$-categories
    \[
        \tFun(\Usa(\cX), \tA) \iso \tFun^{\psa}(\cX^\op, \tA).
    \]
\end{cor}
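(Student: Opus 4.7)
The plan is to mimic the proof of \cref{span-1.5-univ-fun} essentially verbatim, replacing $\Spanoh(\cX)$ with $\Usa(\cX)$ and the decoration $\cocpl$ with $\psa$ throughout. All the ingredients we need are already in place: the universal property of $\Usa(\cX)$ established in \cref{usa-univ}, the exponential adjunction for parameterized semiadditive objects established in \cref{inm-exp-adj}, and the $2$-categorical Yoneda lemma from \cite[6.2.7]{Hin}.

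Concretely, for an arbitrary $2$-category $\tB$, I would compute the mapping space in a chain of natural isomorphisms
\begin{align*}
    \Map(\tB, \tFun(\Usa(\cX), \tA))
    &\simeq \Map(\Usa(\cX), \tFun(\tB, \tA)) \\
    &\simeq \Map^{\psa}(\cX^\op, \tFun(\tB, \tA)) \\
    &\simeq \Map(\tB, \tFun^{\psa}(\cX^\op, \tA)),
\end{align*}
where the first step is the exponential adjunction of $2$-categories, the second step applies the universal property of $\Usa(\cX)$ from \cref{usa-univ}, and the third step invokes \cref{inm-exp-adj}. These isomorphisms are all induced by pre-composition with the canonical map $\cX^\op \to \Usa(\cX)$ and are natural in $\tB$. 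Since this holds for every $\tB$, the $2$-categorical Yoneda lemma then promotes the asserted equivalence of spaces into an equivalence of $2$-categories.

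I do not anticipate a genuine obstacle here; the argument is entirely formal, parallel to the proof of \cref{span-1.5-univ-fun}. The only minor point worth being explicit about is the naturality of each of the three isomorphisms above in the variable $\tB$, so that the composite equivalence of mapping spaces does correspond to a map of $2$-categories in a way compatible with $\tB$-variance, as required for invoking Yoneda. This is automatic from how each isomorphism is constructed, but may warrant a sentence in the written proof.
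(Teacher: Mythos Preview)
Your proposal is correct and matches the paper's proof essentially verbatim: the paper also chains together the exponential adjunction, \cref{usa-univ}, and \cref{inm-exp-adj}, then invokes the $2$-categorical Yoneda lemma, exactly as you outline.
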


\begin{proof}
    Recall from \cref{inm-exp-adj} that for any $2$-category $\tB$, the exponential adjunction restricts to an isomorphism of spaces
    \[
        \Map^{\psa}(\cX^\op, \tFun(\tB, \tA)) \simeq \Map(\tB, \tFun^{\psa}(\cX^\op, \tA)).
    \]
    Using this and the universal property from \cref{usa-univ} we get
    \begin{align*}
        \Map(\tB, \tFun(\Usa(\cX), \tA))
        &\simeq \Map(\Usa(\cX), \tFun(\tB, \tA))\\
        &\simeq \Map^{\psa}(\cX^\op, \tFun(\tB, \tA))\\
        &\simeq \Map(\tB, \tFun^{\psa}(\cX^\op, \tA)).
    \end{align*}
    Since this holds for any $2$-category $\tB$, the result follows from the $2$-categorical Yoneda lemma \cite[6.2.7]{Hin}.
\end{proof}

Consider now the $2$-category of iterated spans $\mdef{\Spant(\cX)}$, constructed by Haugseng \cite{spans}.
Informally, it has the same objects as $\cX$, a $1$-morphism from $X$ to $Y$ is a span $X \gets Z \to Y$ (and composition is given by pullback), and a $2$-morphism is a span between spans, i.e.\ a commutative diagram
\[\begin{tikzcd}[row sep=small]
	& Z \\
	X & Q & Y \\
	& W
	\arrow[from=1-2, to=2-1]
	\arrow[from=1-2, to=2-3]
	\arrow[from=3-2, to=2-1]
	\arrow[from=3-2, to=2-3]
	\arrow[from=2-2, to=1-2]
	\arrow[from=2-2, to=3-2]
\end{tikzcd}\]

We recall that there is a canonical factorization
\[
    \cX^\op \too \Spanoh(\cX) \too \Spant(\cX),
\]
which is given on objects and $1$-morphisms by the obvious maps, and on $2$-morphisms by
\[\begin{tikzcd}[row sep=small]
	& Z &&&& Z \\
	X && Y && X & Z & Y \\
	& W &&&& W
	\arrow[from=1-2, to=2-1]
	\arrow[from=1-2, to=2-3]
	\arrow[from=1-2, to=3-2]
	\arrow[from=1-6, to=2-5]
	\arrow[Rightarrow, no head, from=1-6, to=2-6]
	\arrow[from=1-6, to=2-7]
	\arrow[shorten <=19pt, shorten >=19pt, maps to, from=2-3, to=2-5]
	\arrow[from=2-6, to=3-6]
	\arrow[from=3-2, to=2-1]
	\arrow[from=3-2, to=2-3]
	\arrow[from=3-6, to=2-5]
	\arrow[from=3-6, to=2-7]
\end{tikzcd}\]
Observe that this factorization in particular exhibits $\Spant(\cX)$ as $\cX$-cocomplete by \cref{span-1.5-univ}.
Dually, there is a canonical factorization
\[
    \cX^\op \too \Spanoh(\cX)^{\co} \too \Spant(\cX),
\]
exhibiting $\Spant(\cX)$ as $\cX$-complete.

Recall from \cref{span-adj} that given $f\colon X \to Y$, there is an explicit presentation of $f_!$ and $f^*$ as well as their adjunction data in $\Spanoh(\cX)$.
In line with this, we can present both morphisms $f_!$ and $f_*$ in $\Spant(\cX)$ as the same span, picking an identification $f_! = f_*$, with corresponding adjunction data.
We now prove that not only are $f_!$ and $f_*$ identified, but through this identification, the norm maps and dualizing maps are canonically isomorphic to the identity.
In particular, this shows that the map $\cX^\op \to \Spant(\cX)$ exhibits an $\cX$-semiadditive object.
We thank Lior Yanovski for suggesting a simplified approach to the proof.

\begin{prop}\label{span-2-sa}
    The map $\cX^\op \to \Spant(\cX)$ exhibits an $\cX$-semiadditive object.
    Moreover, under the presentation above, for any map $f\colon X \to Y$ in $\cX$, the $1$-morphism $D_f$ is canonically isomorphic to $\Id_X$, and the $2$-morphism $\Nm_f$ is canonically isomorphic to $\Id_{\Id_{f_*}}$.
\end{prop}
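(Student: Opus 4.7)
The strategy exploits the identification of $f_!$ and $f_*$ in $\Spant(\cX)$ as literally the same 1-morphism. For $f\colon X \to Y$ in $\cX$, consider the span $\varphi_f = (X \xleftarrow{=} X \xrightarrow{f} Y)$. Under the factorization $\cX^\op \to \Spanoh(\cX) \to \Spant(\cX)$, this span represents $f_!$ by \cref{span-adj} (since $\Spanoh(\cX)$ exhibits the $\cX$-cocomplete structure on $\Spant(\cX)$). Under the dual factorization $\cX^\op \to \Spanoh(\cX)^{\co} \to \Spant(\cX)$, the $2$-opposite has the same underlying $1$-morphisms as $\Spanoh(\cX)$ but with reversed $2$-cells, so the same span $\varphi_f$ represents $f_*$. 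Thus $f_! \simeq f_* \simeq \varphi_f$ canonically in $\Spant(\cX)$, while $f^*$ is represented by the reversed span $(Y \xleftarrow{f} X \xrightarrow{=} X)$.

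For the dualizing map $D_f = \pi_{1*}\Delta_!$: both $\pi_{1*}$ and $\Delta_!$ are spans with the identity as one leg, so their composition in $\Spant(\cX)$ is obtained by pulling back $\Delta\colon X \to X \times_Y X$ along the identity, which yields $X$ itself. The resulting span is $X \xleftarrow{=} X \xrightarrow{\pi_1 \circ \Delta} X$, and since $\pi_1 \circ \Delta = \Id_X$, we obtain $D_f \simeq \Id_X$ canonically. Consequently the norm map $\Nm_f\colon f_! D_f \to f_*$ becomes, after the identifications, a $2$-cell from $\varphi_f$ to $\varphi_f$ in $\Spant(\cX)$, which we claim is $\Id_{\varphi_f}$.

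To finish the proof, I would explicitly unpack the four constituent $2$-cells of $\Nm_f$ — the unit $u^f_*$ of $f^* \dashv f_*$, the inverse Beck--Chevalley map $\beta_!^{-1}$, and the counit $c^{\pi_1}_*$ of $\pi_1^* \dashv \pi_{1*}$ — as concrete spans of spans. Coming via the factorization through $\Spanoh(\cX)$ or $\Spanoh(\cX)^{\co}$, each such $2$-cell has the identity as one of its two middle legs, with the non-trivial leg being one of the maps $\Delta$, $f$, $\pi_1$, or $\pi_2$. Composing these four $2$-cells via horizontal and vertical composition in $\Spant(\cX)$ involves a sequence of pullbacks that, using $\pi_1 \circ \Delta = \pi_2 \circ \Delta = \Id_X$ and the literal identification $f_! = f_* = \varphi_f$, collapses to the identity $2$-cell on $\varphi_f$.

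The main obstacle is this last bookkeeping: each $2$-cell is a small diagram of pullback squares, and stringing four of them together while tracking the correct canonical isomorphisms is delicate. A more conceptual route — presumably the one suggested by Yanovski per the acknowledgement — is to identify an involution on $\Spant(\cX)$ that fixes $f^*$ while swapping the two directions of adjunction for $\varphi_f$, so that under this involution the norm map is interchanged with its formal adjoint; combined with $f_! = f_*$, this forces $\Nm_f$ to be self-dual and hence the identity without any explicit pullback computation.
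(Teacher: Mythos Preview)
Your treatment of $D_f$ is fine and matches the paper's argument (the paper phrases it as $\pi_{1*}\Delta_! \simeq \pi_{1*}\Delta_* \simeq \Id_X$, which is the same identification).

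For $\Nm_f$, however, you do not actually carry out the computation, and both routes you sketch miss the paper's key simplification. The paper observes that since composition of $1$-morphisms in $\Spant(\cX)$ is literally given by pullback, the Beck--Chevalley $2$-cells $\beta_!\colon \pi_{2!}\pi_1^* \to f^*f_!$ and $\beta_*\colon f^*f_* \to \pi_{2*}\pi_1^*$ are both canonically the identity. In particular $\beta_!^{-1}$ is identified with $\beta_*$, so the norm map can be rewritten entirely in terms of the right-adjoint structure:
\[
f_* \simeq f_*\pi_{1*}\Delta_* \xrightarrow{u^f_*} f_*f^*f_*\pi_{1*}\Delta_* \xrightarrow{\beta_*} f_*\pi_{2*}\pi_1^*\pi_{1*}\Delta_* \xrightarrow{c^{\pi_1}_*} f_*\pi_{2*}\Delta_* \simeq f_*.
\]
Now expanding $\beta_*$ as $c^f_* \circ (\text{iso}) \circ u^{\pi_1}_*$, the whole composite collapses to the identity by the zigzag identities for $f^* \dashv f_*$ and $\pi_1^* \dashv \pi_{1*}$. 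No explicit span-of-span pullback bookkeeping is needed, and no involution/self-duality argument is invoked; the simplification credited to Yanovski is precisely this ``$\beta_!^{-1} = \beta_*$, then zigzag'' trick. Your direct-computation plan would presumably succeed but is the harder road, and your involution speculation is not what the paper does.
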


\begin{proof}
    We have already explained above that $\Spant(\cX)$ is $\cX$-complete and $\cX$-cocomplete, so it remains to show that for any map $f\colon X \to Y$ in $\cX$, the norm map and the dualizing map are both canonically isomorphic to the identity.
    For the convenience of the reader we recall their construction from \cref{Nm-def}.
    Consider the commutative diagram
    \[\begin{tikzcd}
        X \\
        & {X \times_Y X} & X \\
        & X & Y
        \arrow["\Delta", from=1-1, to=2-2]
        \arrow[curve={height=-18pt}, Rightarrow, no head, from=1-1, to=2-3]
        \arrow[curve={height=18pt}, Rightarrow, no head, from=1-1, to=3-2]
        \arrow["{\pi_2}", from=2-2, to=2-3]
        \arrow["{\pi_1}"', from=2-2, to=3-2]
        \arrow["\lrcorner"{anchor=center, pos=0.125}, draw=none, from=2-2, to=3-3]
        \arrow["f", from=2-3, to=3-3]
        \arrow["f"', from=3-2, to=3-3]
    \end{tikzcd}\]
    We let $D_f := \pi_{1*} \Delta_!$, and define $\Nm_f\colon f_! D_f \to f_*$ to be the composition
    \[
        f_! D_f
        = f_! \pi_{1*} \Delta_!
        \too[u^f_*] f_* f^* f_! \pi_{1*} \Delta_!
        \too[\beta_!^{-1}] f_* \pi_{2!} \pi_1^* \pi_{1*} \Delta_!
        \too[c^{\pi_1}_*] f_* \pi_{2!} \Delta_!
        \simeq f_*.
    \]

    For the dualizing map, note that $\pi_1 \Delta \simeq \Id_X$, so that under our identification, we get
    \[
        D_f := \pi_{1*} \Delta_!
        \simeq \pi_{1*} \Delta_*
        \simeq \Id_X
    \]
    as required.

    We now move on to the norm map.
    First, recall that composition in $\Spant(\cX)$ is given by forming pullbacks, thus, as the square in the diagram above is a pullback square, the Beck--Chevalley map
    \[
        \beta_!\colon \pi_{2!} \pi_1^* \too f^* f_!
    \]
    is identified with the identity $2$-morphism.
    Similarly, the same holds for the Beck--Chevalley map
    \[
        \beta_*\colon f^* f_* \too \pi_{2*} \pi_1^*.
    \]
    In particular, $\beta_!^{-1}$ is identified with $\beta_*$, so that the norm map is identified with the composition
    \[
        f_*
        \simeq f_* \pi_{1*} \Delta_*
        \too[u^f_*] f_* f^* f_* \pi_{1*} \Delta_*
        \too[\beta_*] f_* \pi_{2*} \pi_1^* \pi_{1*} \Delta_*
        \too[c^{\pi_1}_*] f_* \pi_{2*} \Delta_*
        \simeq f_*.
    \]
    Finally, recall that the Beck--Chevalley map $\beta_*$ is given by the composition
    \[
        f^* f_* \too[u^{\pi_1}_*] f^* f_* \pi_{1*} \pi_1^* \iso f^* f_* \pi_{2*} \pi_1^* \too[c^f_*] \pi_{2*} \pi_1^*,
    \]
    so that the norm is the identity by the zigzag identities for the adjunctions $f^* \dashv f_*$ and $\pi_1^* \dashv \pi_{1*}$.
\end{proof}

As an immediate consequence, since $\Usa(\cX)$ is the universal $2$-category with an $\cX$-semiadditive object (\cref{usa-univ}), we get a canonical factorization
\[
    \cX^\op \too \Usa(\cX) \too \Spant(\cX).
\]
We note that \cref{span-2-sa} does not state that $\Spant(\cX)$ is the universal $2$-category with an $\cX$-semiadditive object.
In fact, this can not be true for any $\cX$, as for example the dualizing map may not be invertible.
Having said that, by \cref{ambi-defs}, when $\cX$ is truncated we know that $D_f$ is the identity, suggesting that in this case $\Spant(\cX)$ is the universal $2$-category with an $\cX$-semiadditive object, namely that the map $\Usa(\cX) \to \Spant(\cX)$ above is an equivalence.

\begin{conj}\label{span-2-univ}
    Assume that $\cX$ is truncated.
    The map $\cX^\op \to \Spant(\cX)$ exhibits the target as the universal $2$-category with an $\cX$-semiadditive object.
    That is, it induces an equivalence of $2$-categories
    \[
        \tFun(\Spant(\cX), \tA) \iso \tFun^{\psa}(\cX^\op, \tA).
    \]
\end{conj}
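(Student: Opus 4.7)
The plan is to show that the canonical $2$-functor
\[
    \phi\colon \Usa(\cX) \too \Spant(\cX),
\]
induced via the universal property of \cref{usa-univ} from the $\cX$-semiadditive object exhibited in \cref{span-2-sa}, is itself an equivalence of $2$-categories. Once this is done, the conclusion follows by combining with \cref{usa-univ-fun}, which yields the chain of equivalences
\[
    \tFun(\Spant(\cX), \tA) \iso \tFun(\Usa(\cX), \tA) \iso \tFun^{\psa}(\cX^\op, \tA).
\]

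The preliminary steps are routine. Essential surjectivity on objects is immediate since both $2$-categories share the objects of $\cX$. Essential surjectivity on $1$-morphisms follows from writing any span $X \xleftarrow{f} Z \xrightarrow{g} Y$ in $\Spant(\cX)$ as the composition $g_! \circ f^*$ of morphisms coming from the two factorizations $\cX^\op \to \Spanoh(\cX) \to \Spant(\cX)$ and $\cX^\op \to \Spanoh(\cX)^\co \to \Spant(\cX)$ recalled before \cref{span-2-sa}, both of which lift to $\Usa(\cX)$ by its $\cX$-completeness and $\cX$-cocompleteness. The substantive content is to establish equivalences on hom categories. For the target, one has the standard identification (see \cite[Proposition 8.3]{spans}, already used in the proof of \cref{free-cocpl})
\[
    \Hom_{\Spant(\cX)}(X, Y) \simeq \Spano(\cX_{/X \times Y}),
\]
so the task reduces to establishing the same description of $\Hom_{\Usa(\cX)}(X, Y)$.

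I expect this latter hom-category computation to be the main obstacle. The natural route is to exploit the pushout-plus-localization presentation $\Usa(\cX) = \Ucplcocpl(\cX)[N^{-1}]$ from the proof of \cref{usa-univ}, first computing the hom categories in the pushout $\Ucplcocpl(\cX) = \Spanoh(\cX) \sqcup_{\cX^\op} \Spanoh(\cX)^\co$ in terms of the over-category $\cX_{/X \times Y}$, and then checking that inverting the norm maps supplies exactly the additional $2$-morphisms needed to turn a pair of opposite spans into a general span of spans. The truncation hypothesis enters crucially here via \cref{ambi-defs} to ensure $D_f \simeq \Id$, so that the norm maps have source $f_!$ rather than a twisted variant, matching the trivial dualizing maps exhibited in $\Spant(\cX)$ by \cref{span-2-sa}. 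A conceptually cleaner alternative, which avoids the explicit hom-category manipulation, would be to construct an inverse $\psi\colon \Spant(\cX) \to \Usa(\cX)$ by invoking a direct universal property of $\Spant(\cX)$ parallel to \cref{span-1.5-univ} (presumably extractable from the work of Macpherson or Stefanich), classifying $2$-categories equipped with a canonically identified $\cX$-indexed family of ambidexterous adjunctions; one would then apply such a universal property to $\Usa(\cX)$, whose tautological semiadditive structure supplies exactly this data when $\cX$ is truncated, and verify that $\psi$ and $\phi$ are mutually inverse using uniqueness in both universal properties.
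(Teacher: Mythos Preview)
The statement in question is a \emph{conjecture} in the paper, not a theorem; the paper contains no proof. It is explicitly presented as open, with the remark following it citing only supporting evidence: the $(2,2)$-categorical analogue due to Balmer--Dell'Ambrogio, and the compatibility with the Cnossen--Lenz--Linskens description of the free $\cX$-semiadditive parameterized category (see \cref{free-sa-univ-sa}). There is therefore no paper proof to compare your proposal against.

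On its own terms, your proposal correctly reduces the question to showing that the comparison $\phi\colon \Usa(\cX) \to \Spant(\cX)$ is an equivalence, and correctly isolates the hom-category computation as the essential content. However, neither of the two routes you sketch actually closes the gap. The first route---computing hom categories in the pushout $\Ucplcocpl(\cX) = \Spanoh(\cX) \sqcup_{\cX^\op} \Spanoh(\cX)^{\co}$ and then in the localization at the norm maps---is genuinely hard: hom categories in pushouts of $2$-categories do not admit any tractable formula in general, and understanding exactly which $2$-morphisms are created by inverting the norms is precisely the content of the conjecture. The second route---invoking a ``direct universal property of $\Spant(\cX)$ parallel to \cref{span-1.5-univ}, presumably extractable from the work of Macpherson or Stefanich''---is circular: such a universal property \emph{is} the conjecture (or an equivalent reformulation of it), and the cited works treat $\Spanoh$, not $\Spant$. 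Your proposal is thus an accurate diagnosis of where the difficulty lies, consistent with the paper's own assessment that the statement remains open, but it is not a proof.
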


\begin{remark}
    This conjecture was loosely described by Hopkins--Lurie in \cite[Remark 4.2.5]{HL}.
    Furthermore, Balmer--Dell'Ambrogio \cite[6.1.13]{BD} have proven a $(2,2)$-categorical version of the conjecture, supporting its validity.
    Finally, as we explain below in \cref{free-sa-univ-sa}, the conjecture is also supported by the description of the free $\cX$-semiadditive $\cX$-parameterized category due to Cnossen--Lenz--Linskens \cite{CLL}.
\end{remark}

\subsubsection{Free Semiadditivity}

Recall that \cref{free-cocpl} describes the free $\cX$-cocomplete $\cX$-parameterized category, given by $\cX_{/-}\colon \cX^\op \to \tCat$.
Analogously, Cnossen--Lenz--Linskens \cite{CLL} recently described the free $\cX$-semiadditive $\cX$-parameterized category, extending the result of Harpaz from the canonically $\Spacespi$-parameterized case \cite{Harpaz}.

\begin{defn}[{\cite[Construction 4.1]{CLL}}]
    We define the $\cX$-parameterized category
    \[
        \mdef{\Spano(\cX_{/-})}\colon \cX^\op \too \tCat
    \]
    by applying $\Spano$ level-wise.
    Namely, this is the functor sending $X \in \cX$ to the over-category $\Spano(\cX_{/X})$, and a map $f\colon X \to Y$ to the pullback functor $f^*\colon \Spano(\cX_{/X}) \to \Spano(\cX_{/Y})$.
\end{defn}

We now state the main result of Cnossen--Lenz--Linskens \cite{CLL}, albeit in a somewhat weaker form, only concerning the value at the terminal object in $\cX$ (assuming it exists), but once more avoiding mentioning internal homs.
Here $\Fun^{\psa}_\cX(\CC, \DD)$ denotes the hom category in $\tFun^{\psa}(\cX^\op, \tCat)$.

\begin{prop}[{\cite[Theorem A]{CLL}}]\label{free-sa}
    Assume that $\cX$ is truncated and has a terminal object $\pt$.
    Then, the $\cX$-parameterized category $\Spano(\cX_{/-})$ is the free $\cX$-semiadditive $\cX$-parameterized category.
    Namely, for any $\cX$-semiadditive $\cX$-parameterized category $\CC\colon \cX^\op \to \tCat$, evaluation at the object $\pt \to \pt \in \Spano(\cX_{/\pt})$ induces an equivalence
    \[
        \Fun^{\psa}_\cX(\Spano(\cX_{/-}), \CC) \iso \CC_\pt
        \qin \tCat.
    \]
\end{prop}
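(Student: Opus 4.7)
The plan is to follow the two-step pattern of \cref{free-cocpl}: first establish that the candidate $\Spano(\cX_{/-})$ is $\cX$-semiadditive, then produce the inverse to evaluation at $(\pt \to \pt)$ by combining the universal property of $\cX_{/-}$ from \cref{free-cocpl} with Harpaz's nonparameterized theorem that $\Spano$ is the free semiadditive completion.

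First, I would verify that $\Spano(\cX_{/-})$ is $\cX$-semiadditive. Level-wise, each $\Spano(\cX_{/X})$ is a semiadditive category, and for $f \colon X \to Y$ in $\cX$ the pullback functor $f^* \colon \Spano(\cX_{/Y}) \to \Spano(\cX_{/X})$ admits a two-sided adjoint $f_! = f_*$ induced by post-composition with $f$, viewed as a span. The Beck--Chevalley isomorphisms for both the left and the right adjoint follow from the pullback pasting lemma in $\cX$. Since $\cX$ is truncated, \cref{ambi-defs} reduces the verification of the norm maps to identifying them with the canonical isomorphism $f_! \iso f_*$, which is a direct check using the explicit description in \cref{Nm-def} together with the span description of the adjunction data, exactly as in the proof of \cref{span-2-sa}.

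Second, I would construct the inverse map $\CC_\pt \to \Fun^{\psa}_\cX(\Spano(\cX_{/-}), \CC)$. Given $M \in \CC_\pt$, \cref{free-cocpl} produces a unique $\cX$-cocontinuous functor $F_M \colon \cX_{/-} \to \CC$ sending $(\pt \to \pt)$ to $M$. The task is to extend $F_M$ across the inclusion of the wide subcategory $\cX_{/-} \hookrightarrow \Spano(\cX_{/-})$ of forward-pointing spans to an $\cX$-semiadditive functor $\widehat{F}_M$. Level-wise, Harpaz's nonparameterized theorem supplies such an extension, sending a wrong-way arrow $g$ in $\cX_{/X}$ to the right adjoint $g_*$ in $\CC_X$, which by $\cX$-semiadditivity of $\CC$ agrees with the left adjoint $g_!$ that $F_M$ already encodes via cocontinuity. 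Evaluation at $(\pt \to \pt)$ is then by construction inverse to this on objects, and full faithfulness follows by combining the level-wise Harpaz equivalence at $X = \pt$ with \cref{free-cocpl}.

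The hard part will be the coherent globalization of the level-wise extensions $\widehat{F}_{M,X}$ into a genuinely $\cX$-parameterized functor $\Spano(\cX_{/-}) \to \CC$. Naturality with respect to forward maps is automatic from the nonparameterized extensions, but compatibility with wrong-way maps---namely that integration in $\CC$ commutes with pullback as prescribed by the span structure on $\Spano(\cX_{/-})$---encodes the full strength of \emph{parameterized} (rather than merely level-wise) semiadditivity of $\CC$. Following Cnossen--Lenz--Linskens, I would handle this by working internally to the presheaf topos $\PSh(\cX)$, where Harpaz's extension can be formulated as an internal construction and its naturality with respect to representable morphisms is then automatic, avoiding the need to invoke the (still conjectural) universal property \cref{span-2-univ} of $\Spant(\cX)$.
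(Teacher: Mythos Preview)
The paper does not supply a proof of this proposition: it is stated with the citation \cite[Theorem A]{CLL} and no \texttt{proof} environment follows. The result is used as a black box, with \cref{free-sa-univ-sa} afterwards only remarking on its relationship to the conjectural \cref{span-2-univ}. So there is no ``paper's own proof'' to compare your proposal against---both you and the paper ultimately defer to Cnossen--Lenz--Linskens for the substance.

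That said, one point in your sketch deserves tightening. Your second step invokes ``Harpaz's nonparameterized theorem'' level-wise to extend $F_{M,X}\colon \cX_{/X} \to \CC_X$ across the span category. But Harpaz's theorem (even in its generalized form for a truncated $\cX$) would require $\CC_X$ to be semiadditive in a \emph{nonparameterized} sense---to admit $\cX_{/X}$-indexed (co)limits internally with invertible norms---and this does not follow from $\CC$ being $\cX$-parameterized semiadditive, which is a condition on the transition functors $f^*$ and their adjoints, not on the individual fibers. The wrong-way morphism you want at level $X$ really comes from the parameterized structure (the unit of $g^* \dashv g_*$ together with the norm $g_! \simeq g_*$, pushed down along the structure maps to $X$), not from any level-wise universal property. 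You acknowledge this in your final paragraph by retreating to the internal formulation in $\PSh(\cX)$, which is indeed how CLL proceed; but the middle paragraph's framing of a clean ``level-wise Harpaz, then globalize'' two-step is misleading, since the first step already needs the parameterized data.
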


\begin{remark}\label{free-sa-univ-sa}
    As we have seen previously, Martini--Wolf's description of the free $\cX$-cocomplete $\cX$-parameterized category from \cref{free-cocpl} follows from the universal property of the $2$-category of spans described in \cref{span-1.5-univ}.
    There is a similar connection between the conjectural universal property of the $2$-category of iterated spans from \cref{span-2-univ} and \cref{free-sa}, as we have
    \[
        \Hom_{\Spant(\cX)}(\pt, X) \simeq \Spano(\cX_{/X}),
    \]
    providing further evidence for the conjecture.
\end{remark}

	\section{Parameterized Decategorification}\label{sec-decat}

\begin{notn}
    Throughout this section, $\mdef{\cX}$ is a category with pullbacks and a terminal object $\pt \in \cX$.
    For $X \in \cX$, we denote by $\mdef{f_X}\colon X \to \pt$ the unique map.
\end{notn}

Let $\CC\colon \cX^\op \to \tCat$ be an $\cX$-parameterized category.
If $\CC$ is $\cX$-cocomplete, then for any $X \in \cX$ and $M \in \CC_\pt$, we can decategorify, and define the \emph{$\cX$-parameterized homology} to be
\[
    M_\CC[X] := f_{X!} f_X^* M
    \qin \CC_\pt.
\]
Dually, if $\CC$ is $\cX$-complete, we define the \emph{$\cX$-parameterized cohomology} to be
\[
    M_\CC^X := f_{X*} f_X^* M
    \qin \CC_\pt.
\]

\begin{example}\label{hom-ord}
    In the case where $\cX = \Spaces$ and $\CC = \Sp$ is the canonically $\Spaces$-parameterized category associated to the category of spectra (see \cref{can-param}), the $\Spaces$-parameterized homology is given by $M_\Sp[X] \simeq \colim_X M$, i.e.\ it agrees with the usual homology functor, and dually for cohomology.
\end{example}

Recall that if $\cX$ is truncated, and $\CC\colon \cX^\op \to \tCat$ is $\cX$-semiadditive, the norm map provides an isomorphism $\Nm_f\colon f_! \iso f_*$.
In particular, this identifies $\cX$-parameterized homology and cohomology
\[
    M_\CC[X] \simeq M_\CC^X
    \qin \CC_\pt.
\]
As such, the functoriality of $\cX$-parameterized homology in $X$ endows the $\cX$-parameterized cohomology with a wrong-way functoriality, which we call \emph{integration} along $f$
\[
    \int_f\colon M_\CC^X \simeq M_\CC[X] \too M_\CC[Y] \simeq M_\CC^Y.
\]

This in turn allows for a further decategorification step.
We define the \emph{$\cX$-parameterized cardinality} of $X$ at $M$ to be the composition of the right- and wrong-way maps along $f_X\colon X \to \pt$ on $\cX$-parameterized cohomology
\[
    |X|_\CC\colon M \too M_\CC^X \too[\int_X] M.
\]

The purpose of this section is to make these constructions fully coherent in all parameters, making use of the free $\cX$-cocomplete and free $\cX$-semiadditive $\cX$-parameterized categories from \cref{free-cocpl} and \cref{free-sa}.

\subsection{Parameterized (Co)homology}

\begin{defn}\label{param-hom}
    Let $\CC\colon \cX^\op \to \tCat$ be an $\cX$-cocomplete $\cX$-parameterized category.
    We define the \tdef{$\cX$-parameterized homology} functor to be the composition
    \[
        \CC_\pt
        \iso \Fun^{\cocpl}_\cX(\cX_{/-}, \CC)
        \too \Fun(\cX, \CC_\pt),
        \qquad M \mapsto \mdef{M_\CC[-]}
    \]
    of the inverse of the isomorphism from \cref{free-cocpl} with the functor restricting an $\cX$-cocontinuous $\cX$-parameterized functor to its value at $\pt \in \cX$.
\end{defn}

We can present the construction in a different way.
Recall that we gave a proof of \cref{free-cocpl} relying on the universal property of the $2$-category of spans from \cref{span-1.5-univ}.
This universal property implies that $\CC$ lifts to a $2$-functor
\[
    \CC\colon \Spanoh(\cX) \too \tCat,
\]
which induces a functor from the endomorphism category of $\pt \in \Spanoh(\cX)$ to the endomorphism category of $\CC_\pt \in \tCat$
\[
    \cX \simeq \End_{\Spanoh(\cX)}(\pt) \too \End(\CC_\pt).
\]
It follows from the construction that this is the mate of the functor from \cref{param-hom} under the exponential adjunction.
This perspective allows to easily verify that $M_\CC[-]$ takes the values we expect on objects and morphisms of $\cX$.

\begin{prop}\label{hom-counit}
    Let $\CC\colon \cX^\op \to \tCat$ be an $\cX$-cocomplete $\cX$-parameterized category.
    For any $X \in \cX$ we have an isomorphism
    \[
        M_\CC[X] \simeq f_{X!} f_X^* M
        \qin \CC_\pt
    \]
    naturally in $M \in \CC_\pt$.
    For any map $f\colon X \to Y$ the induced morphism $M_\CC[X] \to M_\CC[Y]$ is isomorphic to the composition
    \[
        M_\CC[X] \simeq f_{X!} f_X^* M \simeq f_{Y!} f_! f^* f_Y^* \too[c^f_!] f_{Y!} f_Y^* \simeq M_\CC[Y]
    \]
    where $c^f_!$ is the counit of the adjunction $f_! \dashv f^*$, naturally in $M \in \CC_\pt$.
\end{prop}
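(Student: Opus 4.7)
The plan is to exploit the alternative description of $M_\CC[-]$ given right after \cref{param-hom}. Since $\CC$ is $\cX$-cocomplete, by \cref{span-1.5-univ} it lifts to a $2$-functor $\CC\colon \Spanoh(\cX) \to \tCat$, and under the exponential adjunction the functor $M \mapsto M_\CC[-]$ is the mate of the composition
\[
\cX \iso \End_{\Spanoh(\cX)}(\pt) \too \End(\CC_\pt)
\]
induced by $\CC$. It thus suffices to compute the image of each object $X \in \cX$ and each morphism $f\colon X \to Y$ under this composition, together with naturality in $M$.

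The object-level statement is then essentially immediate: the object $X \in \cX$ corresponds to the span $\pt \gets X \to \pt$ with both arrows equal to $f_X$, and by \cref{span-adj} this span decomposes in $\Spanoh(\cX)$ as the composition $f_{X!} \circ f_X^*$. Hence $\CC$ sends it to the endofunctor $f_{X!} f_X^*$ of $\CC_\pt$, and evaluation at $M$ yields $M_\CC[X] \simeq f_{X!} f_X^* M$.

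For the morphism-level statement, I would use the factorization $f_X = f_Y \circ f$ to rewrite the source span as the fourfold composition $f_{Y!} \circ f_! \circ f^* \circ f_Y^*$ in $\Spanoh(\cX)$. The map $f\colon X \to Y$ corresponds to the $2$-morphism in $\Spanoh(\cX)$ from $\pt \gets X \to \pt$ to $\pt \gets Y \to \pt$ with apex map $f$ itself. The main task, and the principal technical point I expect to be the main obstacle, is to identify this $2$-morphism with the whiskering $f_{Y!} \cdot c^f_! \cdot f_Y^*$ of the counit of $f_! \dashv f^*$. I expect this to follow directly from the explicit presentation of $c^f_!$ in \cref{span-adj}: there the counit is depicted as the $2$-morphism whose apex map is $f$ itself, and whiskering by $f_Y^*$ and $f_{Y!}$ is computed by iterated pullback, which leaves this apex map unchanged (using in particular $X \times_Y Y \simeq X$ along $f$ and $\Id_Y$). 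Applying $\CC$ to this equality of $2$-morphisms and evaluating at $M$ then produces the desired composite. Naturality in $M$ is automatic from the functoriality of the canonical evaluation at $\pt$.
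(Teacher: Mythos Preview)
Your proposal is correct and follows essentially the same approach as the paper. The paper likewise uses the $2$-functor $\CC\colon \Spanoh(\cX) \to \tCat$, identifies the object $X$ with the span $\pt \gets X \to \pt$, and for the morphism decomposes the relevant $2$-morphism in $\Spanoh(\cX)$ as the horizontal composite of two identity $2$-morphisms (the whiskerings by $f_Y^*$ and $f_{Y!}$) with the counit $c^f_!$ in the middle, exactly as you anticipate.
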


\begin{proof}
    Recall that under the equivalence $\cX \simeq \End_{\Spanoh(\cX)}(\pt)$, the object $X \in \cX$ corresponds to the span
    \[
        \pt \oot[f_X] X \too[f_X] \pt.
    \]
    Thus, $M_\CC[X]$ is the image of this span under the lift $\CC\colon \Spanoh(\cX) \to \tCat$, proving the first part.
    
    For the second part, we are interested in the image of the following $2$-morphism
    \[\begin{tikzcd}[row sep=small]
        & X \\
        \pt && \pt \\
        & Y
        \arrow["{f_X}"', from=1-2, to=2-1]
        \arrow["{f_X}", from=1-2, to=2-3]
        \arrow["f"{description}, from=1-2, to=3-2]
        \arrow["{f_Y}", from=3-2, to=2-1]
        \arrow["{f_Y}"', from=3-2, to=2-3]
    \end{tikzcd}\]
    Observe that this $2$-morphism is equivalent to the following composition of three $2$-morphisms
    \[\begin{tikzcd}[row sep=small]
        & Y && X && Y \\
        \pt && Y && Y && \pt \\
        & Y && Y && Y
        \arrow["{f_Y}"', from=1-2, to=2-1]
        \arrow[Rightarrow, no head, from=1-2, to=2-3]
        \arrow[Rightarrow, no head, from=1-2, to=3-2]
        \arrow["f"', from=1-4, to=2-3]
        \arrow["f", from=1-4, to=2-5]
        \arrow["f"{description}, from=1-4, to=3-4]
        \arrow[Rightarrow, no head, from=1-6, to=2-5]
        \arrow["{f_Y}", from=1-6, to=2-7]
        \arrow[Rightarrow, no head, from=1-6, to=3-6]
        \arrow[Rightarrow, no head, from=2-3, to=3-4]
        \arrow[Rightarrow, no head, from=2-5, to=3-6]
        \arrow["{f_Y}", from=3-2, to=2-1]
        \arrow[Rightarrow, no head, from=3-2, to=2-3]
        \arrow[Rightarrow, no head, from=3-4, to=2-5]
        \arrow["{f_Y}"', from=3-6, to=2-7]
    \end{tikzcd}\]
    We recall from \cref{span-adj} that the middle $2$-morphism is the counit $c^f_!$ in $\Spanoh(\cX)$.
    Thus, applying the lift $\CC\colon \Spanoh(\cX) \to \tCat$, we see that the our map is given by
    \[
        M_\CC[X] \simeq f_{X!} f_X^* M
        \simeq f_{Y!} f_! f^* f_Y^* M
        \too[c^f_!] f_{Y!} f_Y^* M
        \simeq M_\CC[Y],
    \]
    concluding the proof.
\end{proof}

We note that that aside from the functoriality in $M$ and $X$ which are evident from the construction, the construction is also manifestly functorial in $\CC$ itself along $\cX$-cocontinuous $\cX$-parameterized maps.
Stating this for a single morphism, we have the following.

\begin{prop}\label{hom-fun}
    Let $F\colon \CC \to \DD$ be an $\cX$-cocontinuous $\cX$-parameterized map between $\cX$-cocomplete $\cX$-parameterized categories.
    Then there is an isomorphism
    \[
        F_\pt(M_\CC[X]) \simeq (F_\pt(M))_\DD[X]
        \qin \DD_\pt
    \]
    natural in $M \in \CC_\pt$ and $X \in \cX$.
\end{prop}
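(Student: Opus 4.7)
The plan is to observe that the two-step construction of parameterized homology in \cref{param-hom} is manifestly natural in $\CC$ along $\cX$-cocontinuous $\cX$-parameterized maps, so the statement will follow by applying this naturality to $F$. Concretely, the equivalence $\Fun^{\cocpl}_\cX(\cX_{/-}, \CC) \iso \CC_\pt$ of \cref{free-cocpl} is realized by evaluation at $\pt \to \pt \in \cX_{/\pt}$; this is a natural construction in the target $\cX$-cocomplete $\cX$-parameterized category along $\cX$-cocontinuous maps, since post-composition with such an $F$ preserves cocontinuity and commutes with evaluation. Similarly, the functor $\Fun^{\cocpl}_\cX(\cX_{/-}, \CC) \to \Fun(\cX, \CC_\pt)$ given by restricting to values at $\pt \in \cX$ is obviously natural in $\CC$ because it is obtained by pre- and post-composing with fixed data.

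Assembling these observations, post-composition with $F$ yields a commutative diagram
\[
\begin{tikzcd}
\CC_\pt \ar[r, "\sim"] \ar[d, "F_\pt"'] & \Fun^{\cocpl}_\cX(\cX_{/-}, \CC) \ar[r] \ar[d, "F \circ -"] & \Fun(\cX, \CC_\pt) \ar[d, "F_\pt \circ -"] \\
\DD_\pt \ar[r, "\sim"'] & \Fun^{\cocpl}_\cX(\cX_{/-}, \DD) \ar[r] & \Fun(\cX, \DD_\pt)
\end{tikzcd}
\]
in which the top composite sends $M$ to $M_\CC[-]$ and the bottom composite sends $F_\pt(M)$ to $(F_\pt(M))_\DD[-]$. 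Chasing $M$ around the outer square therefore produces the claimed isomorphism $F_\pt(M_\CC[X]) \simeq (F_\pt(M))_\DD[X]$, natural in both $M \in \CC_\pt$ and $X \in \cX$.

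For the reader who prefers a concrete verification, one can equivalently appeal to \cref{hom-counit}: the fact that $F$ is an $\cX$-parameterized map gives a canonical isomorphism $F_X f_X^* \simeq f_X^* F_\pt$, and $\cX$-cocontinuity gives a canonical isomorphism $f_{X!} F_X \simeq F_\pt f_{X!}$, whence
\[
    F_\pt(M_\CC[X]) = F_\pt f_{X!} f_X^* M \iso f_{X!} F_X f_X^* M \iso f_{X!} f_X^* F_\pt(M) = (F_\pt(M))_\DD[X].
\]
The only potential obstacle is checking that these isomorphisms are coherently natural in $X$ (that is, compatible with the counit-of-adjunction description of the functoriality given in \cref{hom-counit}); the universal-property argument above sidesteps this by packaging all the coherences into a single commutative diagram in $\tCat$.
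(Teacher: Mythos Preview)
Your proof is correct and takes essentially the same approach as the paper: both arguments reduce to the commutativity of the same diagram relating $\CC_\pt$, $\Fun^{\cocpl}_\cX(\cX_{/-}, \CC)$, and $\Fun(\cX, \CC_\pt)$ (and their $\DD$-counterparts) under post-composition with $F$. Your additional remarks explaining why the squares commute and offering the explicit $f_{X!} f_X^*$ verification are helpful elaborations, but the core argument is identical.
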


\begin{proof}
    This is the commutativity of the following diagram
    \[\begin{tikzcd}
        {\CC_\pt} & {\Fun^{\cocpl}_\cX(\cX_{/-}, \CC)} & {\Fun(\cX, \CC_\pt)} \\
        {\DD_\pt} & {\Fun^{\cocpl}_\cX(\cX_{/-}, \DD)} & {\Fun(\cX, \DD_\pt)}
        \arrow["{F_\pt}"', from=1-1, to=2-1]
        \arrow["\sim"', from=1-2, to=1-1]
        \arrow[from=1-2, to=1-3]
        \arrow["F"', from=1-2, to=2-2]
        \arrow["{F_\pt \circ -}", from=1-3, to=2-3]
        \arrow["\sim"', from=2-2, to=2-1]
        \arrow[from=2-2, to=2-3]
    \end{tikzcd}\]
\end{proof}

We recall that $\Spanoh(\cX)^{\co}$, the $2$-category with opposite $2$-morphisms of $\Spanoh(\cX)$, plays the dual role to $\Spanoh(\cX)$, i.e.\ it is the universal $\cX$-complete $\cX$-parameterized category.
Similarly (and as a consequence), $(\cX_{/-})^\op\colon \cX^\op \to \tCat$ is the free $\cX$-complete $\cX$-parameterized category.
This leads us to dually define $\cX$-parameterized cohomology, which enjoys the dual formal properties.

\begin{defn}
    Let $\CC\colon \cX^\op \to \tCat$ be an $\cX$-complete $\cX$-parameterized category.
    We define the \tdef{$\cX$-parameterized cohomology} functor to be the composition
    \[
        \CC_\pt
        \iso \Fun^{\cpl}_\cX((\cX_{/-})^\op, \CC)
        \too \Fun(\cX^\op, \CC_\pt),
        \qquad M \mapsto \mdef{M_\CC^{(-)}}.
    \]
\end{defn}

\subsection{Parameterized Integration}

In this subsection we study the case where $\CC$ is $\cX$-semiadditive, under the assumption that $\cX$ is truncated.
Under this assumption, the norm map provides an isomorphism $\Nm_f\colon f_! \iso f_*$.
In particular, this identifies $\cX$-parameterized homology and cohomology $M_\CC[X] \simeq M_\CC^X$, and endows $\cX$-parameterized cohomology with integration maps, in a way which we now make compatible with the right-way maps in a coherent way.

\begin{defn}\label{coh-int-def}
    Assume that $\cX$ is truncated.
    Let $\CC\colon \cX^\op \to \tCat$ be an $\cX$-semiadditive $\cX$-parameterized category.
    We define the \tdef{$\cX$-parameterized cohomology with integration} functor to be the composition
    \[
        \CC_\pt
        \iso \Fun^{\psa}_\cX(\Spano(\cX_{/-}), \CC)
        \too \Fun(\Spano(\cX), \CC_\pt),
        \qquad M \mapsto \mdef{M_\CC^{(-)}}
    \]
    of the inverse of the isomorphism from \cref{free-sa} with the functor restricting an $\cX$-cocontinuous $\cX$-parameterized functor to its value at $\pt \in \cX$.
    
    For a morphism $f\colon X \to Y$ in $\cX$, we call the image of the span $X \oot[f] Y \too Y$ under $M_\CC^{(-)}\colon \Spano(\cX) \to \CC_\pt$ \tdef{the integral along $f$} and denote it by
    \[
        \mdef{\int_f}\colon M_\CC^X \too M_\CC^Y.
    \]
\end{defn}

Note that, by construction, the restriction of $\cX$-parameterized cohomology with integration along the inclusions
\[
    \cX \too \Spano(\cX),
    \qquad \cX^\op \too \Spano(\cX)
\]
is the $\cX$-parameterized homology and cohomology respectively.
Thus, as expected, in this situation, for any $X \in \cX$, the norm map $\Nm_f$ provides an isomorphism
\[
    M_\CC[X] \simeq f_{X!} f_X^* M \simeq f_{X*} f_X^* M \simeq M_\CC^X.
\]
In the same way, the integration map is given by the composition
\[
    \int_f\colon M_\CC^X \simeq M_\CC[X] \too M_\CC[Y] \simeq M_\CC^Y,
\]
where the middle map is described in \cref{hom-counit} in terms of the counit $c^f_!$.

\begin{remark}
    Recall that $\cX$-parameterized homology can also be constructed using the universal property of $\Spanoh(\cX)$ from \cref{span-1.5-univ}.
    Under \cref{span-2-univ}, $\cX$-parameterized cohomology with integration admits a similar alternative construction.
\end{remark}

Similarly to the case of homology and cohomology from \cref{hom-fun}, this construction is also functorial in $\cX$-semiadditive $\cX$-parameterized maps.

\begin{prop}\label{coh-int-fun}
    Let $F\colon \CC \to \DD$ be an $\cX$-semiadditive $\cX$-parameterized map between $\cX$-semiadditive $\cX$-parameterized categories.
    Then there is an isomorphism
    \[
        F_\pt(M_\CC^X) \simeq (F_\pt(M))_\DD^X
        \qin \DD_\pt
    \]
    natural in $M \in \CC_\pt$ and $X \in \Spano(\cX)$.
    In particular, for any map $f\colon X \to Y$, there is a commutative diagram
    \[\begin{tikzcd}
        {F_\pt(M_\CC^X)} & {(F_\pt(M))_\DD^X} \\
        {F_\pt(M_\CC^Y)} & {(F_\pt(M))_\DD^Y}
        \arrow["\sim", from=1-1, to=1-2]
        \arrow["{F_\pt(\int_f)}"', from=1-1, to=2-1]
        \arrow["{\int_f}", from=1-2, to=2-2]
        \arrow["\sim", from=2-1, to=2-2]
    \end{tikzcd}\]
\end{prop}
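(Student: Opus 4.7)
The plan is to mimic the proof of \cref{hom-fun}, replacing \cref{free-cocpl} by the free $\cX$-semiadditive description \cref{free-sa}, and replacing $\cX$-cocontinuous maps by $\cX$-semiadditive ones throughout. Concretely, I would produce the commutative diagram
\[\begin{tikzcd}
{\CC_\pt} & {\Fun^{\psa}_\cX(\Spano(\cX_{/-}), \CC)} & {\Fun(\Spano(\cX), \CC_\pt)} \\
{\DD_\pt} & {\Fun^{\psa}_\cX(\Spano(\cX_{/-}), \DD)} & {\Fun(\Spano(\cX), \DD_\pt)}
\arrow["{F_\pt}"', from=1-1, to=2-1]
\arrow["\sim"', from=1-2, to=1-1]
\arrow[from=1-2, to=1-3]
\arrow["{F \circ -}", from=1-2, to=2-2]
\arrow["{F_\pt \circ -}", from=1-3, to=2-3]
\arrow["\sim"', from=2-2, to=2-1]
\arrow[from=2-2, to=2-3]
\end{tikzcd}\]
in which the horizontal equivalences come from \cref{free-sa}, and the horizontal functors into the right column are restriction at $\pt \in \cX$ (using the identification $\Spano(\cX_{/\pt}) \simeq \Spano(\cX)$). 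The desired natural isomorphism $F_\pt(M_\CC^X) \simeq (F_\pt(M))_\DD^X$ in $X \in \Spano(\cX)$ is then read off by chasing $M \in \CC_\pt$ through the outer square.

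The first step is to verify that the middle vertical map $F \circ -$ is well-defined, i.e.\ post-composition with $F$ sends $\cX$-semiadditive $\cX$-parameterized maps to $\cX$-semiadditive ones. This is immediate from the definitions: $\cX$-(co)continuity of the composite follows from the fact that both factors are $\cX$-(co)continuous (and in the parameterized setting, this in turn follows from the corresponding fact in $\tA$). Next, the right square commutes tautologically, since restriction of a natural transformation to its $\pt$-component commutes with post-composition by $F$.

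The main content is the commutativity of the left square, which expresses the naturality in the target $\cX$-semiadditive $\cX$-parameterized category of the equivalence in \cref{free-sa}. Since this equivalence is induced by evaluation at the object $\pt \to \pt \in \Spano(\cX_{/\pt})$, and evaluation at a fixed object is manifestly natural in the target, the square commutes. Equivalently, one can invoke the uniqueness part of \cref{free-sa} directly: both $F \circ M_\CC^{(-)}$ and $F_\pt(M)_\DD^{(-)}$ are $\cX$-semiadditive $\cX$-parameterized maps $\Spano(\cX_{/-}) \to \DD$, and both evaluate to $F_\pt(M)$ at $\pt \to \pt$, hence they are canonically isomorphic.

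Finally, the ``In particular'' assertion is obtained by evaluating the resulting natural isomorphism $F_\pt(M_\CC^{(-)}) \simeq (F_\pt(M))_\DD^{(-)}$ on the morphism in $\Spano(\cX)$ given by the span $X \oot[f] Y \too[\Id] Y$, which by construction represents $\int_f$. I do not anticipate a serious obstacle: the argument is essentially formal given \cref{free-sa}, with the only mild subtlety being the verification that $F \circ -$ lands in the $\psa$-subcategory, which reduces to the composability of parameterized adjunctions established via \cref{param-adj} and \cref{pla-cond}.
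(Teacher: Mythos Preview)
Your proposal is correct and follows essentially the same approach as the paper: the paper's proof consists precisely of writing down the commutative diagram you describe (in fact, the paper's printed diagram appears to have a copy-paste slip, writing $\cX_{/-}$ and $\cX$ where your $\Spano(\cX_{/-})$ and $\Spano(\cX)$ are intended). Your additional verification that post-composition with $F$ preserves $\cX$-semiadditive maps, and your explicit unpacking of the ``In particular'' clause, are details the paper leaves implicit.
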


\begin{proof}
    This is the commutativity of the following diagram
    \[\begin{tikzcd}
        {\CC_\pt} & {\Fun^{\psa}_\cX(\cX_{/-}, \CC)} & {\Fun(\cX, \CC_\pt)} \\
        {\DD_\pt} & {\Fun^{\psa}_\cX(\cX_{/-}, \DD)} & {\Fun(\cX, \DD_\pt)}
        \arrow["{F_\pt}"', from=1-1, to=2-1]
        \arrow["\sim"', from=1-2, to=1-1]
        \arrow[from=1-2, to=1-3]
        \arrow["F"', from=1-2, to=2-2]
        \arrow["{F_\pt \circ -}", from=1-3, to=2-3]
        \arrow["\sim"', from=2-2, to=2-1]
        \arrow[from=2-2, to=2-3]
    \end{tikzcd}\]
\end{proof}

\subsection{Parameterized Cardinalities}

We continue with the assumption that $\cX$ is truncated.
Let $\CC\colon \cX^\op \to \tCat$ be an $\cX$-semiadditive $\cX$-parameterized category, and consider the cohomology with integration functor from \cref{coh-int-def}
\[
    \CC_\pt \too \Fun(\Spano(\cX), \CC_\pt),
    \qquad M \mapsto M_\CC^{(-)}.
\]
Using the exponential adjunction we get a functor
\[
    \Spano(\cX) \too \End(\CC_\pt),
\]
which we may decategorify once more by considering its effect on endomorphisms of $\pt \in \Spano(\cX)$.
Recall that $\End_{\Spano(\cX)}(\pt) \simeq \cX^\simeq$, and the functor sends $\pt$ to $\Id_{\CC_\pt}$.

\begin{defn}
    We define the \tdef{$\cX$-parameterized cardinality} functor to be the functor
    \[
        \mdef{|-|_\CC}\colon \cX^\simeq \too \End(\Id_{\CC_\pt}).
    \]
    More concretely, this sends an object $X \in \cX$ to the composition
    \[
        |X|_\CC\colon \Id_{\CC_\pt} \too[u^f_*] f_{X*} f_X^* \too[\int_X] \Id_{\CC_\pt}.
    \]
    More concretely still, this sends $M \in \CC_\pt$ to the map
    \[
        |X|_\CC\colon M \too M_\CC^X \too[\int_X] M.
    \]
\end{defn}

This construction is once more functorial in $\cX$-semiadditive $\cX$-parameterized maps $F\colon \CC \to \DD$.

\begin{prop}\label{card-fun}
    Let $F\colon \CC \to \DD$ be an $\cX$-semiadditive $\cX$-parameterized map between $\cX$-semiadditive $\cX$-parameterized categories.
    Then the two maps
    \[\begin{tikzcd}
        {F_\pt(M)} && {F_\pt(M)}
        \arrow["{F_\pt(|X|_\CC)}", curve={height=-12pt}, from=1-1, to=1-3]
        \arrow["{|X|_\DD}"', curve={height=12pt}, from=1-1, to=1-3]
    \end{tikzcd}\]
    are isomorphic naturally in $M \in \CC_\pt$ and $X \in \cX^\simeq$.
\end{prop}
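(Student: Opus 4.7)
The plan is to deduce the proposition as an immediate consequence of \cref{coh-int-fun}. The key observation is that the cardinality $|X|_\CC$ is, by definition, the composition of the unit $M \to M_\CC^X$ with the integral $\int_{f_X}\colon M_\CC^X \to M$; equivalently, it is the image of the endo-span
\[
    \pt \oot[f_X] X \too[f_X] \pt
\]
at $\pt \in \Spano(\cX)$ under the decategorification functor $\Spano(\cX) \to \End(\CC_\pt)$ obtained from $M_\CC^{(-)}$ via exponential adjunction, evaluated at $M \in \CC_\pt$. The first step I would carry out is to verify this identification: the endo-span above factors in $\Spano(\cX)$ as the composite of the ``restriction'' span $\pt \oot[f_X] X \xleftarrow{\Id} X$ from $\pt$ to $X$ and the ``wrong-way'' span $X \xleftarrow{\Id} X \too[f_X] \pt$ from $X$ to $\pt$, and since composition in $\Spano(\cX)$ is via pullback, the composite is indeed the endo-span displayed above. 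Under $M_\CC^{(-)}\colon \Spano(\cX) \to \CC_\pt$, the two constituent spans are sent to the unit map and to $\int_{f_X}$ respectively, as recorded in \cref{coh-int-def} and the discussion following it.

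Given this reinterpretation of $|X|_\CC$, I would then invoke \cref{coh-int-fun}, which provides an isomorphism $F_\pt(M_\CC^X) \simeq (F_\pt M)_\DD^X$ natural in $M \in \CC_\pt$ and $X \in \Spano(\cX)$. Via the exponential adjunction, this upgrades to a natural isomorphism between the two functors $\Spano(\cX) \to \Fun(\CC_\pt, \DD_\pt)$ sending $Y$ to $F_\pt \circ (-)_\CC^Y$ and to $(F_\pt-)_\DD^Y$, respectively. Both functors send $\pt$ to $F_\pt\colon \CC_\pt \to \DD_\pt$, so restricting the natural isomorphism along $\cX^\simeq \simeq \End_{\Spano(\cX)}(\pt) \hookrightarrow \Spano(\cX)$ yields a natural isomorphism between the two functors $\cX^\simeq \to \End(F_\pt)$ sending $X$ to $F_\pt \circ |X|_\CC$ and to $|X|_\DD \circ F_\pt$. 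Evaluating at $M \in \CC_\pt$ recovers the stated isomorphism of endomorphisms of $F_\pt(M)$.

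I do not anticipate any substantial obstacle: the argument is a formal decategorification of \cref{coh-int-fun}, entirely parallel to the way \cref{coh-int-fun} itself was derived from \cref{free-sa}, and to the analogous deduction for $\cX$-parameterized homology in \cref{hom-fun}. The only care required is in the first step, namely the identification of $|X|_\CC$ with the image of the endo-span $\pt \oot[f_X] X \too[f_X] \pt$, which is purely a matter of unpacking the composition law in $\Spano(\cX)$ together with the explicit descriptions of the unit and integration maps.
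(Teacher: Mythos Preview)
Your proposal is correct and follows essentially the same approach as the paper: invoke \cref{coh-int-fun}, pass through the exponential adjunction to obtain a commutative diagram of functors $\Spano(\cX) \to \Fun(\CC_\pt, \DD_\pt)$, and then restrict to endomorphisms of $\pt \in \Spano(\cX)$. Your first step, verifying that $|X|_\CC$ is the image of the endo-span $\pt \leftarrow X \to \pt$, is in fact the paper's \emph{definition} of the cardinality functor, so no separate verification is needed there.
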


\begin{proof}
    Recall the isomorphism from \cref{coh-int-fun}
    \[
        F_\pt(M_\CC^X) \simeq (F_\pt(M))_\DD^X
    \]
    which is natural in $M \in \CC_\pt$ and $X \in \Spano(\cX)$.
    Under the exponential adjunction, this can be views as the following commutative diagram
    \[\begin{tikzcd}[row sep=small]
        & {\End(\CC_\pt)} \\
        {\Spano(\cX)} && {\Fun(\CC_\pt, \DD_\pt)} \\
        & {\End(\DD_\pt)}
        \arrow["{F_\pt \circ -}", from=1-2, to=2-3]
        \arrow[from=2-1, to=1-2]
        \arrow[from=2-1, to=3-2]
        \arrow["{- \circ F_\pt}"', from=3-2, to=2-3]
    \end{tikzcd}\]
    The result follows by considering the endomorphisms of $\pt \in \Spano(\cX)$.
\end{proof}

\subsection{Change of Parameterization}

In our applications, we will be interested in the interaction between change of parametrization and parameterized (co)homology and integration, a phenomenon which we now study.

\begin{notn}
    Let $\cX$ and $\cY$ be categories with pullbacks and terminal objects, and let $G\colon \cY \to \cX$ be a map preserving pullbacks and the terminal object.
\end{notn}

\begin{defn}
    We define the \tdef{change of parameterization along $G$} to be the functor given by pre-composition with $G$
    \[
        G^*\colon \tFun(\cX^\op, \tCat) \too \tFun(\cY^\op, \tCat).
    \]
\end{defn}

We begin with the case of parameterized homology.

\begin{prop}\label{pb-cc}
    The change of parameterization along $G$ restricts to parameterized cocomplete objects and parameterized cocontinuous maps, that is, it induces a functor
    \[
        G^*\colon \tFun^\cocpl(\cX^\op, \tCat) \too \tFun^\cocpl(\cY^\op, \tCat).
    \]
\end{prop}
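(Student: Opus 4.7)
The plan is to unwind the definition of $\cY$-cocompleteness for $G^*\CC := \CC \circ G^\op$ and reduce it to the $\cX$-cocompleteness of $\CC$, using that $G$ preserves pullbacks. For any $f \colon X' \to Y'$ in $\cY$, the functor $(G^*\CC)(f)$ is by definition $G(f)^* \colon \CC_{G(Y')} \to \CC_{G(X')}$. Since $G(f)$ is a morphism in $\cX$ and $\CC$ is $\cX$-cocomplete, this functor admits a left adjoint, which we take as the candidate $f_!$ for $G^*\CC$. Hence condition (1) of $\cY$-cocompleteness is immediate.

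For condition (2), fix a pullback square in $\cY$ with maps $f, g, \widetilde{f}, \widetilde{g}$. By the hypothesis that $G$ preserves pullbacks, the image square in $\cX$ is again a pullback square. Under the identification of left adjoints above, the Beck--Chevalley map $\widetilde{f}_! \widetilde{g}^* \to g^* f_!$ associated to the square in $\cY$ for $G^*\CC$ is literally the Beck--Chevalley map for $\CC$ associated to the image pullback in $\cX$; it is therefore an isomorphism. This shows that $G^*\CC$ is $\cY$-cocomplete.

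The morphism-level claim is entirely parallel: if $F \colon \CC \to \DD$ is $\cX$-cocontinuous, then for any $f \colon X' \to Y'$ in $\cY$ the Beck--Chevalley map attached to $G^*F$ at $f$ is, by the same identification, the Beck--Chevalley map attached to $F$ at $G(f)$ in $\cX$, which is an isomorphism by assumption. Higher morphisms and the $2$-functoriality of $G^*$ impose no further conditions, since $G^*$ is pre-composition and the relevant subcategory $\tFun^\cocpl$ is determined by conditions on objects and $1$-morphisms only.

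The only subtle point, which is more a matter of bookkeeping than of substance, is verifying that the left adjoint and Beck--Chevalley maps formed intrinsically in $G^*\CC$ literally coincide with those of $\CC$ at the image objects; this is a formal consequence of $G^*$ being defined by pre-composition, so that units, counits, and mates in $G^*\CC$ are precisely those of $\CC$ evaluated at $G$-images. Once this identification is in place, no additional argument is required.
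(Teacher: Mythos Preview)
Your proof is correct and follows exactly the same approach as the paper, which simply observes that all conditions involve pullbacks (preserved by $G$) and images of morphisms (unchanged under pre-composition). You have spelled out in detail what the paper dispatches in a single sentence.
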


\begin{proof}
    All conditions involved are about pullbacks in the source, which are preserved by $G$, and the image of morphisms, which are not changed.
\end{proof}

\begin{const}\label{G-cc-lift}
    We construct a lift of $G$ to a $\cY$-cocontinuous $\cY$-parameterized map
    \[
        G_{/-}\colon \cY_{/-} \too G^*(\cX_{/-}).
    \]

    As $G$ preserves pullbacks, we have an induced functor
    \[
        \Spanoh(G)\colon \Spanoh(\cY) \to \Spanoh(\cX).
    \]
    Consider $\pt \in \Spanoh(\cY)$, and recall that $G(\pt) \simeq \pt$, so we get an induced map
    \[
        \Hom_{\Spanoh(\cY)}(\pt, -)
        \too \Hom_{\Spanoh(\cX)}(G(\pt), G(-))
        \simeq G^*(\Hom_{\Spanoh(\cX)}(\pt, -))
    \]
    of functors $\Spanoh(\cY) \to \tCat$.
    Recall that as in the proof of \cref{free-cocpl}, the source is equivalent to $\cY_{/-}$ and the target to $G^*(\cX_{/-})$.
    Thus, the restriction along $\cY^\op \to \Spanoh(\cY)$ gives the desired functor, which is moreover $\cY$-cocontinuous by \cref{span-1.5-univ-fun} since it lifts to $\Spanoh(\cY)$ by construction.
\end{const}

\begin{remark}\label{G-cc-lift-other-way}
    There is another way to construct this functor.
    Recall from \cref{free-cocpl} that $\cY_{/-}$ is the free $\cY$-cocomplete $\cY$-parameterized category, thus the data of a $\cY$-cocontinuous $\cY$-parameterized map
    \[
        \cY_{/-} \too G^*(\cX_{/-})
    \]
    is equivalent to choosing an object in $\cX_{/G(\pt)} \simeq \cX$.
    The map constructed above clearly corresponds to $\pt \in \cX$.
    However, using this construction directly, it is unclear how to describe the corresponding map $\cY_{/-} \to G^*(\cX_{/-})$.
\end{remark}

We now show that the change of parameterization is compatible with parameterized homology.

\begin{prop}\label{G-cc-hom}
    Let $\CC\colon \cX^\op \to \tCat$ be an $\cX$-cocomplete $\cX$-parameterized category.
    Then there is an isomorphism
    \[
        M_{G^*(\CC)}[X] \simeq M_\CC[G(X)]
        \qin \CC_\pt
    \]
    natural in $M \in \CC_\pt$ and $X \in \cX$.
\end{prop}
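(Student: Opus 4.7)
The plan is to use the lifts to the $2$-category of spans, following the proof strategy of \cref{free-cocpl}. Since $\CC$ is $\cX$-cocomplete, the universal property of \cref{span-1.5-univ} provides a unique lift $\widetilde{\CC}\colon \Spanoh(\cX) \to \tCat$ restricting to $\CC$ along $\cX^\op \to \Spanoh(\cX)$. Since $G$ preserves pullbacks, it induces a $2$-functor $\Spanoh(G)\colon \Spanoh(\cY) \to \Spanoh(\cX)$ compatible with the inclusions from $\cY^\op$ and $\cX^\op$. The composition $\widetilde{\CC} \circ \Spanoh(G)\colon \Spanoh(\cY) \to \tCat$ then restricts to $G^*(\CC)$, so by the uniqueness part of \cref{span-1.5-univ} it is the lift of $G^*(\CC)$.

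Next, I would unpack the $\cX$-parameterized homology in terms of these lifts, as in the discussion following \cref{param-hom}. Namely, $M_\CC[-]\colon \cX \to \CC_\pt$ is the mate under the exponential adjunction of the functor $\cX \simeq \End_{\Spanoh(\cX)}(\pt) \to \End(\CC_\pt)$ induced by $\widetilde{\CC}$ acting on $M$; similarly for $G^*(\CC)$, using that $G(\pt) \simeq \pt$. Under these identifications, the functor $M_{G^*(\CC)}[-]\colon \cY \to \CC_\pt$ is obtained by pre-composing $M_\CC[-]\colon \cX \to \CC_\pt$ with $G\colon \cY \to \cX$, since the endomorphism functor of $\pt$ under $\Spanoh(G)$ agrees with $G$. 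This yields the desired natural isomorphism $M_{G^*(\CC)}[X] \simeq M_\CC[G(X)]$ in $X \in \cY$, and naturality in $M$ is automatic since everything is obtained by acting on $M$.

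Alternatively, one can give a more hands-on argument using \cref{hom-counit}: both sides compute to $f_{G(X)!} f_{G(X)}^* M$ in $\CC_\pt$, where the structure maps defining the functors $f_{G(X)!}$ and $f_{G(X)}^*$ are the ones in $\CC$, since $G$ carries the unique map $X \to \pt$ in $\cY$ to the unique map $G(X) \to \pt$ in $\cX$. The main subtlety, and the step I would treat most carefully, is coherent naturality in $X$: for a map $h\colon X \to Y$ in $\cY$, \cref{hom-counit} describes the induced map via the counit $c^h_!$ of the adjunction $h_! \dashv h^*$ computed in $G^*(\CC)$, which by construction agrees with the counit $c^{G(h)}_!$ of $G(h)_! \dashv G(h)^*$ in $\CC$. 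Using the $\Spanoh$ formulation above sidesteps this verification, since it is packaged into the $2$-functoriality of $\widetilde{\CC} \circ \Spanoh(G)$.
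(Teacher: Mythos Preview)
Your argument is correct and morally the same as the paper's, but the packaging differs. The paper proves the result by showing that the diagram
\[
\begin{tikzcd}
    {\CC_\pt} & {\Fun^{\cocpl}_\cX(\cX_{/-}, \CC)} & {\Fun(\cX, \CC_\pt)} \\
    & {\Fun^{\cocpl}_\cY(G^*(\cX_{/-}), G^*(\CC))} \\
    {\CC_\pt} & {\Fun^{\cocpl}_\cY(\cY_{/-}, G^*(\CC))} & {\Fun(\cY, \CC_\pt)}
    \arrow[Rightarrow, no head, from=1-1, to=3-1]
    \arrow["\sim"', from=1-2, to=1-1]
    \arrow[from=1-2, to=1-3]
    \arrow[from=1-2, to=2-2]
    \arrow["{G^*}", from=1-3, to=3-3]
    \arrow["{(G_{/-})^*}"', from=2-2, to=3-2]
    \arrow["\sim"', from=3-2, to=3-1]
    \arrow[from=3-2, to=3-3]
\end{tikzcd}
\]
commutes, using the official \cref{param-hom} definition of homology and the map $G_{/-}\colon \cY_{/-} \to G^*(\cX_{/-})$ from \cref{G-cc-lift}. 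You instead use the alternative description of homology via endomorphisms of $\pt$ in $\Spanoh$ and the $2$-functor $\Spanoh(G)$ directly. Since $G_{/-}$ is itself constructed from $\Spanoh(G)$ (it is the hom functor $\Hom_{\Spanoh}(\pt,-)$ transported along $\Spanoh(G)$), the two arguments are unwinding the same equivalence; your route is slightly more direct and avoids invoking \cref{G-cc-lift} as a separate ingredient, while the paper's route stays closer to the formal definition and makes the naturality in $M$ manifest via the left column of the diagram. Your ``hands-on'' alternative via \cref{hom-counit} is also fine for objects and single morphisms, and you correctly identify that full coherence is what the $\Spanoh$ argument buys.
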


\begin{proof}
    We need to show that the following diagram commutes
    \[\begin{tikzcd}
        {\CC_\pt} & {\Fun^{\cocpl}_\cX(\cX_{/-}, \CC)} & {\Fun(\cX, \CC_\pt)} \\
        & {\Fun^{\cocpl}_\cY(G^*(\cX_{/-}), G^*(\CC))} \\
        {\CC_\pt} & {\Fun^{\cocpl}_\cY(\cY_{/-}, G^*(\CC))} & {\Fun(\cY, \CC_\pt)}
        \arrow[Rightarrow, no head, from=1-1, to=3-1]
        \arrow["\sim"', from=1-2, to=1-1]
        \arrow[from=1-2, to=1-3]
        \arrow[from=1-2, to=2-2]
        \arrow["{G^*}", from=1-3, to=3-3]
        \arrow["{(G_{/-})^*}"', from=2-2, to=3-2]
        \arrow["\sim"', from=3-2, to=3-1]
        \arrow[from=3-2, to=3-3]
    \end{tikzcd}\]
    The right (rectangle shaped) pentagon commutes because $G_{/-}$ is a lift of $G$.
    To see that the left (rectangle shaped) pentagon commutes, recall that horizontal morphisms are given by evaluation at $\pt \to \pt \in \cX_{/\pt}$ and $\pt \to \pt \in \cY_{/\pt}$, and since $G$ preserves the terminal object, the functor
    \[
        G_{/\pt}\colon \cY_{/\pt} \too G^*(\cX_{/\pt}) \iso \cX_{\pt}
    \]
    sends $\pt \to \pt$ to $\pt \to \pt$.
\end{proof}

The statements for parameterized cohomology hold dually, and move on to the case of parameterized semiadditivity.
The analogue of \cref{pb-cc} holds with the same proof (note that here we do not need truncatedness assumptions).

\begin{prop}\label{pb-sa}
    The change of parameterization along $G$ restricts to parameterized semiadditive objects and parameterized semiadditive maps, that is, it induces a functor
    \[
        G^*\colon \tFun^\psa(\cX^\op, \tCat) \too \tFun^\psa(\cY, \tCat).
    \]
\end{prop}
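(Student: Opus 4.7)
The plan is to follow the template of \cref{pb-cc} almost verbatim, with one additional verification concerning norm maps. The key point is that $G\colon\cY\to\cX$ preserves pullbacks and the terminal object, so every diagram in $\cY$ of the form used to construct the ambidexterity data gets sent to a diagram of the same form in $\cX$.

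First, for the object part, let $\CC\colon\cX^\op\to\tCat$ be $\cX$-semiadditive. By \cref{pb-cc} (and its dual), $G^*\CC$ is $\cY$-cocomplete and $\cY$-complete, since these are Beck--Chevalley conditions on pullback squares in the source, and $G$ preserves pullbacks. So I only need to check that for every $f\colon X\to Y$ in $\cY$, the norm map $\Nm_f\colon f_!D_f\to f_*$ associated to $G^*\CC$ is an isomorphism. Unwinding \cref{Nm-def}, this norm map is built from the diagonal $\Delta\colon X\to X\times_Y X$ and the projections $\pi_1,\pi_2$ in $\cY$, together with the units/counits of the adjunctions $f^*\dashv f_*$ and $\pi_1^*\dashv\pi_{1*}$, and the Beck--Chevalley isomorphism $\beta_!$ for the pullback square. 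Since $G$ preserves pullbacks, its image is exactly the analogous pullback square in $\cX$, and the adjunction data on $(G^*\CC)(f)=\CC(G(f))$ is (by definition) that of $\CC(G(f))$. Consequently the norm map for $f$ in $G^*\CC$ is identified with the norm map for $G(f)$ in $\CC$, which is an isomorphism by $\cX$-semiadditivity of $\CC$.

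For the morphism part, let $F\colon\CC\to\DD$ be an $\cX$-semiadditive map. By definition this means $F$ is $\cX$-cocontinuous and $\cX$-continuous, i.e.\ satisfies the Beck--Chevalley conditions of \cref{def-cocont} for every $f\colon X\to Y$ in $\cX$. The map $G^*F\colon G^*\CC\to G^*\DD$ is simply $F_{G(-)}$, so its Beck--Chevalley conditions for a map $f$ in $\cY$ coincide with those of $F$ for the map $G(f)$ in $\cX$, which hold by assumption. Hence $G^*F$ is $\cY$-semiadditive. Together with the object part this yields the claimed functor
\[
    G^*\colon \tFun^\psa(\cX^\op, \tCat) \too \tFun^\psa(\cY^\op, \tCat).
\]

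There is no real obstacle here; the only point requiring any thought is the compatibility of the norm map construction with $G$, and this is immediate once one notes that every ingredient in \cref{Nm-def}---the defining pullback square, the identifications $\pi_1\Delta\simeq\Id$, $\pi_2\Delta\simeq\Id$, and the relevant unit/counit $2$-morphisms---is preserved by pulling back along a pullback-preserving functor. The assumption that $\cX$ be truncated is not needed, because we are not invoking \cref{ambi-defs} but rather the direct definition of the norm map.
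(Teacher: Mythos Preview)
Your proposal is correct and follows essentially the same approach as the paper: the paper's proof consists of the single remark that the argument of \cref{pb-cc} applies verbatim (noting, as you do, that truncatedness is not needed), and your write-up is a careful unpacking of exactly that argument, including the identification of the norm map for $f$ in $G^*\CC$ with the norm map for $G(f)$ in $\CC$.
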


We now wish to construct $\Spano(G_{/-})$, the parameterized semiadditive analogue of \cref{G-cc-lift}.
We shall follow the same steps, using $\Spant$ in place of $\Spanoh$.
Note that this does not use \cref{span-2-univ}.
Rather, the construction itself is unrelated to parameterized semiadditivity, and to show that the resulting map is $\cY$-semiadditive, we only need the fact that $\cY^\op \to \Spant(\cY)$ is an example of a $\cY$-semiadditive $\cY$-parameterized object from \cref{span-2-sa}.

\begin{const}\label{G-psa-lift}
    Assume that $\cX$ and $\cY$ are truncated.
    We construct a $\cY$-semiadditive $\cY$-parameterized map
    \[
        \Spano(G_{/-})\colon \Spano(\cY_{/-}) \too G^*(\Span(\cX_{/-})).
    \]

    As $G$ preserves pullbacks, we have an induced functor
    \[
        \Spant(G)\colon \Spant(\cY) \to \Spant(\cX).
    \]
    
    Consider $\pt \in \Spant(\cY)$, and recall that $G(\pt) \simeq \pt$, so we get an induced map
    \[
        \Hom_{\Spant(\cY)}(\pt, -)
        \too \Hom_{\Spant(\cX)}(G(\pt), G(-))
        \simeq G^*(\Hom_{\Spant(\cX)}(\pt, -))
    \]
    of functors $\Spant(\cY) \to \tCat$.
    Recall that as we explain in \cref{free-sa-univ-sa}, the source is equivalent to $\Spano(\cY_{/-})$ and the target to $\Spano(G^*(\cX_{/-}))$.
    Thus, the restriction along $\cY^\op \to \Spant(\cY)$ gives the desired functor.
    Since $\cY^\op \to \Spant(\cY)$ is an example of a $\cY$-semiadditive $\cY$-parameterized object by \cref{span-2-sa}, the resulting map is $\cY$-semiadditive.
\end{const}

\begin{remark}
    As in \cref{G-cc-lift-other-way}, we can also construct the map via \cref{free-sa} by choosing $\pt \in \Spano(\cX)$.
    As in the previous case, it is unclear how to describe the corresponding map $\Spano(\cY_{/-}) \to G^*(\Spano(\cX_{/-}))$ from this construction.
\end{remark}

We now claim that the change of parameterization is compatible with parameterized cohomology with integration, as in the case of homology.

\begin{prop}\label{G-sa-cohint}
    Assume that $\cX$ and $\cY$ are truncated.
    Let $\CC\colon \cX^\op \to \tCat$ be an $\cX$-semiadditive $\cX$-parameterized category.
    Then there is an isomorphism
    \[
        M_{G^*(\CC)}^X \simeq M_\CC^{G(X)}
        \qin \CC_\pt
    \]
    natural in $M \in \CC_\pt$ and $X \in \Spano(\cX)$.
\end{prop}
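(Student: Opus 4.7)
The plan is to mimic the proof of \cref{G-cc-hom} almost verbatim, replacing the universal property of the free $\cX$-cocomplete $\cX$-parameterized category (\cref{free-cocpl}) by that of the free $\cX$-semiadditive $\cX$-parameterized category (\cref{free-sa}), and the lift $G_{/-}$ from \cref{G-cc-lift} by the $\cY$-semiadditive lift $\Spano(G_{/-})$ from \cref{G-psa-lift}. Concretely, I would establish the commutativity of the diagram
\[\begin{tikzcd}
    {\CC_\pt} & {\Fun^{\psa}_\cX(\Spano(\cX_{/-}), \CC)} & {\Fun(\Spano(\cX), \CC_\pt)} \\
    & {\Fun^{\psa}_\cY(G^*(\Spano(\cX_{/-})), G^*(\CC))} \\
    {\CC_\pt} & {\Fun^{\psa}_\cY(\Spano(\cY_{/-}), G^*(\CC))} & {\Fun(\Spano(\cY), \CC_\pt)}
    \arrow[Rightarrow, no head, from=1-1, to=3-1]
    \arrow["\sim"', from=1-2, to=1-1]
    \arrow[from=1-2, to=1-3]
    \arrow[from=1-2, to=2-2]
    \arrow["{\Spano(G)^*}", from=1-3, to=3-3]
    \arrow["{\Spano(G_{/-})^*}"', from=2-2, to=3-2]
    \arrow["\sim"', from=3-2, to=3-1]
    \arrow[from=3-2, to=3-3]
\end{tikzcd}\]
where the horizontal equivalences on the left come from \cref{free-sa} and the horizontal arrows on the right are ``evaluate at level $\pt$'' under the canonical identifications $\Spano(\cX_{/\pt}) \simeq \Spano(\cX)$ and $\Spano(\cY_{/\pt}) \simeq \Spano(\cY)$. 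Commutativity of the outer rectangle is precisely the asserted isomorphism, natural in $M \in \CC_\pt$ and $X \in \Spano(\cX)$.

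The right-hand pentagon commutes by construction: by \cref{G-psa-lift}, the morphism $\Spano(G_{/-})\colon \Spano(\cY_{/-}) \to G^*(\Spano(\cX_{/-}))$ is produced as the $\cY$-parameterized avatar of the induced functor $\Spano(G)\colon \Spano(\cY) \to \Spano(\cX)$, so that pre-composition with $\Spano(G_{/-})$ followed by evaluation at level $\pt$ agrees with evaluation at level $\pt$ followed by pre-composition with $\Spano(G)$. The left-hand pentagon commutes because $G$ preserves the terminal object, so the functor $G_{/\pt}\colon \cY_{/\pt} \to G^*(\cX_{/\pt}) \simeq \cX_{/\pt}$ sends $\pt \to \pt$ to $\pt \to \pt$, and hence so does $\Spano(G_{/\pt})$; thus, the two equivalences to $\CC_\pt$ in the left-hand pentagon both amount to evaluation at $\pt \to \pt \in \Spano(\cY_{/\pt})$ of the composite parameterized functor, and agree.

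The only non-formal ingredient is the construction of $\Spano(G_{/-})$ and the verification that it is $\cY$-semiadditive, but both are already handled in \cref{G-psa-lift}, which relies crucially on the fact that $\Spant(\cY)$ carries a canonical $\cY$-semiadditive object (\cref{span-2-sa}). With that input, no substantial obstacle remains: the argument is a purely formal diagram chase identical in shape to the one given for $\cX$-parameterized homology in \cref{G-cc-hom}.
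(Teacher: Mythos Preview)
Your proposal is correct and is exactly the approach the paper takes: the paper's proof of \cref{G-sa-cohint} consists of the single sentence ``This is the exact same argument as in \cref{G-cc-hom}, employing $\Spano(G_{/-})$ from \cref{G-psa-lift} in place of $G_{/-}$ from \cref{G-cc-lift},'' and you have simply written out that diagram chase explicitly.
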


\begin{proof}
    This is the exact same argument as in \cref{G-cc-hom}, employing $\Spano(G_{/-})$ from \cref{G-psa-lift} in place of $G_{/-}$ from \cref{G-cc-lift}.
\end{proof}

This also implies the compatibility of the change of parameterization with parameterized cardinalities.

\begin{prop}\label{G-sa-card}
    Assume that $\cX$ and $\cY$ are truncated.
    Let $\CC\colon \cX^\op \to \tCat$ be an $\cX$-semiadditive $\cX$-parameterized category.
    Then there is an isomorphism
    \[
        |X|_{G^*(\CC)} \simeq |G(X)|_\CC
        \qin \End(\Id_{\CC_\pt})
    \]
    natural in $X \in \cX^\simeq$.
\end{prop}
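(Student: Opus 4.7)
The plan is to mirror exactly the argument used for \cref{card-fun}: translate \cref{G-sa-cohint} via the exponential adjunction into a commutative diagram of functors landing in $\Fun(\CC_\pt, \CC_\pt)$, and then restrict to endomorphisms of $\pt$. The reason this works is that parameterized cardinality was defined as precisely the restriction of cohomology with integration to $\End_{\Spano(\cX)}(\pt) \simeq \cX^\simeq$, landing in $\End(\Id_{\CC_\pt})$.

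More concretely, the natural isomorphism $M_{G^*(\CC)}^X \simeq M_\CC^{G(X)}$ furnished by \cref{G-sa-cohint}, which is natural in $M \in \CC_\pt$ and $X \in \Spano(\cY)$, can, under the exponential adjunction, be reorganized as a commutative triangle
\[
\begin{tikzcd}[row sep=small]
& {\End(\CC_\pt)} \\
{\Spano(\cY)} & & {\Fun(\CC_\pt, \CC_\pt)} \\
& {\End(\CC_\pt)}
\arrow["{- \circ \Spano(G)}", from=1-2, to=2-3]
\arrow[from=2-1, to=1-2]
\arrow[from=2-1, to=3-2]
\arrow[Rightarrow, no head, from=3-2, to=2-3]
\end{tikzcd}
\]
in which the upper map is $M_\CC^{(-)}$ precomposed with $\Spano(G)$ and the lower is $M_{G^*(\CC)}^{(-)}$.

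Since $G$ preserves the terminal object, so does $\Spano(G)\colon \Spano(\cY) \to \Spano(\cX)$, hence restricting to endomorphisms of $\pt \in \Spano(\cY)$ and using $\End_{\Spano(\cY)}(\pt) \simeq \cY^\simeq$, $\End_{\Spano(\cX)}(\pt) \simeq \cX^\simeq$, and the fact that these functors send $\pt$ to $\Id_{\CC_\pt}$, one lands in $\End(\Id_{\CC_\pt})$. The induced triangle identifies $|{-}|_{G^*(\CC)}\colon \cY^\simeq \to \End(\Id_{\CC_\pt})$ with the composite $|{-}|_\CC \circ G|_{\cY^\simeq}$, as required.

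There is no real obstacle; the only thing to keep straight is the direction of the natural transformations and the compatibility of $\Spano(G)$ with the basepoint $\pt$, both of which are automatic from the hypothesis that $G$ preserves pullbacks and the terminal object. In fact, the argument is formally identical to that of \cref{card-fun}, with \cref{G-sa-cohint} playing the role that \cref{coh-int-fun} played there.
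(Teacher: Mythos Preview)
Your argument is correct and follows essentially the same route as the paper: take the mate of \cref{G-sa-cohint} under the exponential adjunction to obtain a commutative triangle of functors into $\End(\CC_\pt)$, then restrict to endomorphisms of $\pt$. Your diagram is drawn a bit awkwardly (the arrow labeled ``$- \circ \Spano(G)$'' between two copies of $\End(\CC_\pt)$ is really just the identity, and the paper draws a cleaner triangle $\Spano(\cY) \to \Spano(\cX) \to \End(\CC_\pt)$), but the content is identical, and you correctly use $\cY^\simeq$ where the paper's statement has what appears to be a typo.
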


\begin{proof}
    Taking mate of the isomorphism from \cref{G-sa-cohint} under the exponential adjunction gives the following commutative diagram
    \[\begin{tikzcd}
        {\Spano(\cX)} && {\End(\CC_\pt)} \\
        & {\Spano(\cY)}
        \arrow[from=1-1, to=1-3]
        \arrow["{G^*}"', from=1-1, to=2-2]
        \arrow[from=2-2, to=1-3]
    \end{tikzcd}\]
    where the unlabeled morphisms are the $\cX$-parameterized cohomology with integration functor associated to $\CC$ and the $\cY$-parameterized cohomology with integration functor associated to $G^*(\CC)$.
    The conclusion follows by considering the effect on endomorphisms of $\pt \in \Spano(\cX)$.
\end{proof}

	\section{Tempered Local Systems}\label{sec-temp}

We briefly recall Lurie's category of orbispaces.

\begin{defn}[{\cite[Definition 3.1.4]{Ell3}}]
    We denote by $\mdef{\TT}$ the full subcategory of $\Spaces$ spanned by classifying spaces $\BB H$ of finite abelian groups $H$.
    We call $\mdef{\OS} := \PSh(\TT^\op)$ the category of orbispaces.
    For an orbispace $X \in \OS$, we denote by $|X|$ its underlying space, given by the value at $\pt \in \TT$.
\end{defn}

The functor $X \mapsto |X|$ admits fully faithful adjoints from both sides.
We exclusively view spaces as fully faithfully embedded in orbispaces via the \emph{right} adjoint, sending $X \in \Spaces$ to the orbispace whose value at $T \in \TT$ is $\Map(T, X)$, which we denote by $X$ by a slight abuse of notation.

Let $R \in \CAlg(\Sp)$, let $\GG$ be an oriented $\PP$-divisible group over $R$, and let $X \in \OS$.
To this data, Lurie associates a category $\LocSys_\GG(X)$ of $\GG$-tempered local systems on $X$.
Lurie studies the functoriality of this construction in all three variables, and its relation to (non-tempered) local systems, all of which are central to the study of transchromatic characters.
In this section we extend the study of this construction in the parameter $X \in \OS$.
More specifically, we verify that the categories $\LocSys_\GG(X)$ assembles into an $\OS$-parameterized category, and that the functorialities in the two other variables, and the connection to (non-tempered) local systems, are $\OS$-parameterized as well, yielding $\OS$-parameterized adjunctions.

\subsection{Functorial Construction}

In this subsection we repeat the construction of tempered local systems from \cite{Ell3}, making the routine constructions exhibiting it as functorial in maps of commutative ring spectra and maps of orbispaces.
In particular, we view it as a functor
\[
    \LocSys\colon \PDivor \too \tFun(\OS^\op, \tCat),
\]
sending an oriented $\PP$-divisible group over a commutative ring spectrum to the $\OS$-parameterized category of tempered local systems.

We recall from \cite[Theorem 3.5.5]{Ell3}, that the data of an \emph{oriented $\PP$-divisible group} $\GG$ over $R$, is equivalent to the data of a functor
\[
    R_\GG\colon \TT^\op \too \CAlg(\Sp),
\]
satisfying certain conditions (being $\PP$-divisible in the sense of \cite[Definition 3.5.3]{Ell3}, and furthermore being oriented in the sense of \cite[\S2.5]{Ell3}).
Given an oriented $\PP$-divisible group $\GG$ over $R$ and an orbispace $X \in \OS$, Lurie constructs a category $\LocSys_\GG(X)$ of \emph{tempered local systems} on $X$, which is a certain full subcategory of $\Mod_{R_\GG}(\Fun(\TT_{/X}^\op, \Sp))$ (see \cite[Construction 5.1.3 and Definition 5.2.4]{Ell3}).

Note that a map of commutative ring spectra $R \to S$ gives rise to a map
\[
    R_\GG \too S \otimes_R R_\GG =: S_{\GG_S},
\]
where the tensor product on the right is computed point-wise, and $\GG_S$ is the base-change of $\GG$ to $S$.
This in turn gives rise to a base-change functor
\[
    \LocSys_\GG(X) \too \LocSys_{\GG_S}(X)
\]
given by tensoring with $S_{\GG_S}$ over $R_\GG$ (see \cite[Proposition 6.2.1]{Ell3}).

We begin by constructing the category of pairs of a commutative ring spectrum and an oriented $\PP$-divisible group over it.

\begin{defn}
    We let $\mdef{\PDivor} \subset \Fun(\TT^\op, \CAlg(\Sp))$ denote the subcategory whose objects are oriented $\PP$-divisible groups $R_\GG$, and whose morphisms are morphisms of the form above, namely coming from base-change along a map of commutative ring spectra.
\end{defn}

We now construct the functor of tempered local systems.

\begin{const}\label{ls-def}
    We construct the functor
    \[
        \mdef{\LocSys}\colon \PDivor \too \tFun(\OS^\op, \tCat).
    \]

    \underline{Step 1}:
    Recall that
    \[
        \Fun\colon \Cat^\op \times \Cat \too \Cat
    \]
    preserves limits in each variable.
    Thus, the mate under the exponential adjunction lands in product preserving functors
    \[
        \Fun\colon \Cat^\op \too \Fun^\times(\Cat, \Cat).
    \]
    A product preserving functor sends commutative monoids to commutative monoids, giving
    \[
        \Fun\colon \Cat^\op \too \Fun(\CMon(\Cat), \CMon(\Cat)),
    \]
    and we evaluate at $\Sp \in \CMon(\Cat)$ to get
    \[
        \Fun(-, \Sp)\colon \Cat^\op \too \CMon(\Cat),
    \]
    sending a category $\CC$ to $\Fun(\CC, \Sp)$ endowed with the point-wise symmetric monoidal structure.

    \underline{Step 2}:
    Consider the functor $\TT_{/(-)}^\op\colon \OS \to \Cat$ sending $X$ to $\TT_{/X}^\op$.
    Pre-composing this with the functor from the first step, we get
    \[
        \Fun(\TT_{/(-)}^\op, \Sp)\colon \OS^\op \too \CMon(\Cat).
    \]
    Observe that $\OS^\op$ has an initial object given by $\pt$, whence this functor lifts to commutative monoids under $\Fun(\TT^\op, \Sp)$.
    Since a commutative monoid under $\Fun(\TT^\op, \Sp)$ is in particular a module over it, we get a functor
    \begin{equation}\label{fun-to-mod}
        \Fun(\TT_{/(-)}^\op, \Sp)\colon \OS^\op \too \Mod_{\Fun(\TT^\op, \Sp)}(\Cat).
    \end{equation}

    \underline{Step 3}:
    Recall from \cite[Theorem 4.8.5.16]{HA} that for any presentably symmetric monoidal category $\CC$ we have a functor
    \[
        \Mod_{(-)}(-)\colon \CAlg(\CC) \times \Mod_\CC(\Cat) \too \Cat.
    \]
    Applying this to $\CC = \Fun(\TT^\op, \Sp)$ we get
    \begin{equation}\label{calg-mod}
        \Mod_{(-)}(-)\colon \Fun(\TT^\op, \CAlg(\Sp)) \times \Mod_{\Fun(\TT^\op, \Sp)}(\Cat) \too \Cat.
    \end{equation}

    \underline{Step 4}:
    Restrict \eqref{calg-mod} to $\PDivor \subset \Fun(\TT^\op, \CAlg(\Sp))$, and pre-compose with \eqref{fun-to-mod} to get
    \begin{equation}\label{mod-fun}
        \Mod_{(-)}(\Fun(\TT_{/(-)}^\op, \Sp))\colon \PDivor \times \OS^\op \too \Cat,
    \end{equation}
    sending $R_\GG$ and $X$ to $\Mod_{R_\GG}(\Fun(\TT_{/X}^\op, \Sp))$, with the functoriality of base-change along commutative ring spectra maps and pullback along maps of orbispaces.

    \underline{Step 5}:
    In \cite[Construction 5.1.3 and Definition 5.2.4]{Ell3}, Lurie constructs the full subcategory $\LocSys_\GG(X) \subset \Mod_{R_\GG}(\Fun(\TT_{/X}^\op, \Sp))$ of tempered local systems.
    Furthermore, by \cite[Proposition 6.2.1(d)]{Ell3}, base-change along commutative ring spectra maps sends tempered local systems to tempered local systems, and, similarly, by \cite[Remark 5.2.9]{Ell3} pullback along maps of orbispaces sends tempered local systems to tempered local systems.
    Therefore, using \cite[Proposition A.1]{Ram}, we get a subfunctor of \eqref{mod-fun} on the tempered local systems
    \[
        \LocSys\colon \PDivor \times \OS^\op \too \Cat,
    \]
    sending $R_\GG$ and $X$ to $\LocSys_\GG(X)$, with the functoriality of base-change along commutative ring spectra maps and pullback along maps of orbispaces.
    The desired functor is the mate of this under the exponential adjunction.
\end{const}

\subsection{Basic Properties}

The landmark result of \cite{Ell3} is the tempered ambidexterity theorem, saying that if $f\colon X \to Y$ is a relatively $\pi$-finite map of orbispaces, then the norm map is an isomorphism.
We rephrase this in our language.

\begin{prop}\label{ls-sa}
    Let $R \in \CAlg(\Sp)$ and let $\GG$ be an oriented $\PP$-divisible group over $R$.
    The $\OS$-parameterized category $\LocSys_\GG\colon \OS^\op \to \tCat$ is $\OS$-complete and $\OS$-cocomplete, and its restriction to $\Spacespi^\op \subset \OS^\op$ is $\Spacespi$-semiadditive.
\end{prop}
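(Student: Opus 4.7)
The plan is to reduce the statement to results already established by Lurie in [Ell3], chiefly the tempered ambidexterity theorem, with the main content being the identification of Lurie's individual adjointness and base-change statements as precisely the structure needed for $\OS$-(co)completeness and parameterized semiadditivity in the sense of Section~\ref{sec-param}.

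First, I would address $\OS$-cocompleteness. Given $f\colon X \to Y$ in $\OS$, I would invoke Lurie's construction of the tempered left Kan extension $f_!\colon \LocSys_\GG(X) \to \LocSys_\GG(Y)$, exhibited as the left adjoint of the pullback $f^*$. The relevant input is the fact that $f^*$ is defined via restriction along $\TT_{/X}^\op \to \TT_{/Y}^\op$ on the level of the ambient presheaf categories (which obviously has a left adjoint by left Kan extension) together with Lurie's verification that this left adjoint preserves the tempered full subcategory. The Beck--Chevalley condition for a pullback square in $\OS$ then follows from Lurie's base-change result for tempered local systems along such squares. Dually, one obtains $f_*$ and its Beck--Chevalley property, giving $\OS$-completeness.

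For the semiadditivity claim, I restrict to $f\colon X \to Y$ with $X, Y \in \Spacespi$. Since $\Spacespi$ is truncated, by \cref{ambi-defs} it suffices to check, inductively on truncation, that the norm map $\Nm_f\colon f_! D_f \to f_*$ is an isomorphism. But for a $\pi$-finite map of $\pi$-finite spaces $f$, this is exactly the statement of Lurie's tempered ambidexterity theorem (applied inductively to the diagonal, as in the proof of \cref{ambi-defs}). Concretely, the norm map as defined in \cref{Nm-def} agrees with Lurie's norm map up to the canonical identifications coming from Beck--Chevalley and the adjunction units/counits, so the two invertibility statements are literally the same.

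The main obstacle is essentially bookkeeping: verifying that our intrinsic definitions of $f_!$, $f_*$, the Beck--Chevalley maps, and the norm map (built from these ingredients in \cref{Nm-def}) coincide with Lurie's concrete constructions. The underlying mathematical content is entirely in [Ell3]; the work here lies in matching the parameterized framework of Section~\ref{sec-param} to Lurie's individual results so that the resulting $\OS$-parameterized category really does package his ambidexterity theorem into the single statement of $\Spacespi$-parameterized semiadditivity. No genuinely new argument should be needed.
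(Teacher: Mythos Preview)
Your proposal is correct and follows essentially the same approach as the paper: both reduce the statement to citing the relevant results from \cite{Ell3} (existence of $f_!$ and $f_*$, the Beck--Chevalley conditions, and the tempered ambidexterity theorem), with the only content being the observation that these are precisely the conditions in the definitions of $\OS$-(co)completeness and $\Spacespi$-semiadditivity from \cref{sec-param}. Your explicit invocation of \cref{ambi-defs} to bridge the paper's non-inductive norm map with Lurie's inductive formulation is a reasonable elaboration that the paper leaves implicit.
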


\begin{proof}
    For every map of orbispaces $f\colon X \to Y$ the map $f^*$ has left and right adjoints as is explained in \cite[Notation 7.0.1]{Ell3}.
    The Beck--Chevalley conditions for $f_*$ and $f_!$ are the contents of \cite[Theorem 7.1.6 and Corollary 7.1.7]{Ell3}.
    This shows the $\OS$-completeness cocompleteness.

    The $\Spacespi$-semiadditivity is a special case of \cite[Theorem 7.2.10]{Ell3}.
\end{proof}

Tempered local systems are also related to ordinary local systems.
Given an orbispace $X \in \OS$, there is an underlying space $|X| \in \Spaces$ given by the value at $\pt$.
Also recall that given an oriented $\PP$-divisible group $\GG$ over a commutative ring spectrum $R$, the value of $R_\GG$ at $\pt$ is $R$.
Using these observations, in \cite[Variant 5.1.15]{Ell3}, Lurie describes a functor
\[
    \LocSys_\GG(X) \too \Mod_R(\Sp)^{|X|}.
\]
We now show that this is natural in all arguments.

\begin{prop}\label{tempered-to-ordinary}
    There is a natural transformation
    \[
        \LocSys \too \Mod_{(-)}(\Sp)^{|-|}
    \]
    of functors $\PDivor \too \tFun(\OS^\op, \tCat)$.
\end{prop}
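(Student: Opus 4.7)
The plan is to induce the natural transformation by restriction along the inclusion of the essential fiber over $\pt \in \TT$. First, for each orbispace $X$, the underlying space $|X|$ is naturally identified with the full subcategory of $\TT_{/X}$ on objects of the form $\pt \to X$ (the mapping space from $\pt$ to any other $\pt \to X$ is contractible since $\Map_\TT(\pt, \pt) \simeq \pt$). Using that $|X|$ is a space, hence equivalent to its opposite, this assembles into a commutative square of functors $\OS \to \Cat$
\[\begin{tikzcd}
|(-)|^\op \ar[r] \ar[d] & \TT^\op_{/(-)} \ar[d] \\
\pt \ar[r] & \TT^\op
\end{tikzcd}\]
in which the bottom row is constant, furnishing the underlying natural map (in $X$) along which to restrict.

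Next, applying the product-preserving functor $\Fun(-, \Sp)\colon \Cat^\op \to \CAlg(\Cat)$ from Step~1 of \cref{ls-def} will yield a natural transformation $\Fun(\TT^\op_{/(-)}, \Sp) \to \Fun(|(-)|^\op, \Sp)$ of $\Fun(\TT^\op, \Sp)$-algebras, where the target receives its $\Fun(\TT^\op, \Sp)$-algebra structure via $\mrm{ev}_\pt\colon \Fun(\TT^\op, \Sp) \to \Sp$. Viewing this as a morphism in $\Mod_{\Fun(\TT^\op, \Sp)}(\Cat)$ and applying $\Mod_{(-)}(-)$ from Step~3 of \cref{ls-def} in the second variable, I obtain a natural transformation of functors $\PDivor \times \OS^\op \to \Cat$,
\[
    \Mod_{R_\GG}(\Fun(\TT^\op_{/(-)}, \Sp)) \too \Mod_{R_\GG}(\Fun(|(-)|^\op, \Sp)) \simeq \Mod_R(\Sp)^{|(-)|},
\]
where $R := \mrm{ev}_\pt(R_\GG) \in \CAlg(\Sp)$; the last identification uses that the $R_\GG$-action on the target factors through $R$, together with the fact that $|X|$ is a space. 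By \cite[Variant 5.1.15]{Ell3}, this will restrict to the subcategory $\LocSys_\GG(X) \subset \Mod_{R_\GG}(\Fun(\TT^\op_{/X}, \Sp))$, and transposing across the exponential adjunction will yield the desired natural transformation of functors $\PDivor \to \tFun(\OS^\op, \tCat)$.

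The main subtlety lies in the identification $\Mod_{R_\GG}(\Fun(|X|^\op, \Sp)) \simeq \Mod_R(\Sp)^{|X|}$, which combines two standard facts from the theory of module categories in presentably symmetric monoidal $\infty$-categories: an $R_\GG$-module structure in an $\Fun(\TT^\op, \Sp)$-module whose action factors through $\mrm{ev}_\pt$ is equivalent to an $R$-module structure, and $\Mod_R(\Fun(S, \Sp)) \simeq \Mod_R(\Sp)^S$ for any space $S$. Both are routine, but tracking their naturality in $R_\GG$ (so that base-change of $\PP$-divisible groups on the source matches base-change of scalars on the target) and in $X$ simultaneously will require careful bookkeeping, and is the main work of the proof.
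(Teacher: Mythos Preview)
Your proposal is correct and follows essentially the same route as the paper: both restrict along the inclusion $\{\pt\}_{/X} \simeq |X| \hookrightarrow \TT_{/X}$ and then identify the resulting module category with $\Mod_R(\Sp)^{|X|}$. The only organizational difference is that the paper re-runs the entire construction of \cref{ls-def} with $\{\pt\}$ in place of $\TT$ (so the base is $\Sp$ and the algebra is $R$ from the outset, and $\{\pt\}_{/|X|} \simeq |X|$ gives the target directly), then compares the two parallel constructions via the inclusion $\{\pt\} \subset \TT$; this packaging absorbs the identification you flag as the main subtlety, but the content is the same.
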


\begin{proof}
    Observe that we can construct the functor
    \[
        \Mod_{(-)}(\Sp)^{(-)}\colon \CAlg(\Sp) \times \Spaces^\op \too \Cat
    \]
    in a fashion similar to \cref{ls-def}.
    Indeed, repeating the construction with $\CAlg(\Sp)$ in place of $\PDivor$, $\Spaces$ in place of $\OS$, and $\{ \pt \}$ in place of $\TT$, we get
    \[
        \Mod_{(-)}(\Fun(\{ \pt \}_{/(-)}^\op, \Sp))\colon \CAlg(\Sp) \times \Spaces^\op \too \Cat,
    \]
    and since $\{ \pt \}_{/X}^\op \simeq X$ naturally in $X$, this is equivalent to the functor above.

    Now, pre-compose this with the functor $\PDivor \to \CAlg(\Sp)$ sending $R_\GG$ to $R$ and the functor $|-|\colon \OS \to \Spaces$.
    The inclusion $\{ \pt \} \subset \TT$ induces a natural map in the other direction
    \[
        \TT_{/(-)}^\op \too \{ \pt \}_{/(-)}^\op \simeq |-|,
    \]
    and evaluation at $\pt \in \TT$ sends $R_\GG$ to $R$.
    Thus, we get a natural transformation
    \[
        \Mod_{(-)}(\Fun(\TT_{/(-)}^\op, \Sp))
        \too \Mod_{(-)}(\Fun(\{ \pt \}_{/|-|}^\op, \Sp))
        \simeq \Mod_{(-)}(\Sp)^{|-|}.
    \]
    Finally, recall that $\LocSys$ is a subfunctor of the source, hence is a equipped with a natural transformation to it, and composing them we get the required natural transformation
    \[
        \LocSys \too \Mod_{(-)}(\Sp)^{|-|}.
    \]
\end{proof}

\subsection{Change of Rings}

In \cref{ls-def} we have constructed tempered local systems in a way that is functorial in the commutative ring spectrum, with the functoriality of base-change.
We now observe that this is an $\OS$-parameterized left adjoint.

\begin{prop}\label{base-change-sa}
    Let $\phi\colon R \to S$ be a map of commutative ring spectra and let $\GG$ be an oriented $\PP$-divisible group over $R$.
    There is an $\OS$-parameterized adjunction
    \[
        \phi^*\colon \LocSys_\GG \adj \LocSys_{\GG_S} \noloc \phi_*,
    \]
    where the left adjoint is given by base-change and the right adjoint is given by restriction of scalars.
\end{prop}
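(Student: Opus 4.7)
My plan is to apply \cref{param-adj}. By that proposition, to show $\phi^*$ is a parameterized left adjoint it suffices to exhibit a pointwise right adjoint for each $X \in \OS$ and to verify the Beck--Chevalley condition for pullbacks in $\OS$. The $\OS$-parameterized map $\phi^*\colon \LocSys_\GG \to \LocSys_{\GG_S}$ itself is already constructed, as the image under $\LocSys$ of \cref{ls-def} of the morphism $R_\GG \to S_{\GG_S}$ in $\PDivor$ induced by $\phi$.

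For the pointwise right adjoint, Step 4 of \cref{ls-def} presents $\phi^*_X$ as the restriction to tempered local systems of the ordinary base-change functor
\[
    S_{\GG_S} \otimes_{R_\GG} -\colon \Mod_{R_\GG}(\Fun(\TT_{/X}^\op, \Sp)) \too \Mod_{S_{\GG_S}}(\Fun(\TT_{/X}^\op, \Sp)),
\]
which admits the usual restriction of scalars along $R_\GG \to S_{\GG_S}$ as a right adjoint $\phi_{*,X}$. I would then verify that $\phi_{*,X}$ preserves tempered local systems, so that it descends to a functor $\LocSys_{\GG_S}(X) \to \LocSys_\GG(X)$; this should follow because the condition of being a tempered local system in the sense of \cite[Definition 5.2.4]{Ell3} is a condition on the underlying object in $\Fun(\TT_{/X}^\op, \Sp)$, which is unchanged by restriction of scalars.

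The Beck--Chevalley verification for $f\colon X \to Y$ in $\OS$ is then essentially formal. Both $f^*$ and $\phi^*$ arise by restricting to tempered local systems from a single bifunctor $\Mod_{(-)}(\Fun(\TT_{/(-)}^\op, \Sp))$ of \eqref{mod-fun} in \cref{ls-def}; in particular, on the ambient module categories $f^*$ commutes strictly with $\phi^*$, and also with the restriction of scalars $\phi_*$, as both operations are computed levelwise on $\TT_{/(-)}^\op$. Passing to vertical right adjoints in the resulting commutative square and then restricting to tempered local systems yields the required Beck--Chevalley isomorphism $f^* \phi_{*,Y} \iso \phi_{*,X} f^*$.

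The principal point to pin down is the preservation of temperedness by restriction of scalars, which requires inspecting the precise definition in \cite{Ell3}; beyond that, the proposition is a formal consequence of \cref{param-adj}, the classical adjunction between base-change and restriction of scalars for modules, and the bifunctorial construction of tempered local systems in \cref{ls-def}.
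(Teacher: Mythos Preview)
Your proposal is correct and follows essentially the same approach as the paper: apply \cref{param-adj}, produce the pointwise right adjoint via restriction of scalars on the ambient module categories and check it preserves temperedness, then verify Beck--Chevalley on the ambient categories where everything is given by restriction. The paper handles the one point you flagged as needing inspection---that restriction of scalars preserves temperedness---by citing \cite[Proposition 6.2.1(b)]{Ell3} directly, which confirms your heuristic that the tempered condition depends only on the underlying diagram in $\Fun(\TT_{/X}^\op,\Sp)$; one minor slip is that the Beck--Chevalley condition in \cref{param-adj} is indexed by morphisms $f\colon X\to Y$ in $\OS$, not by pullback squares.
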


\begin{proof}
    By \cref{param-adj}, we need to check two conditions: first that for any $X \in \OS$ the map $\phi^*$ has a right adjoint, and second that the Beck--Chevalley condition holds.

    For the first part, recall that that at the level of modules, we have an adjunction
    \[
        \phi^*\colon \Mod_{R_\GG}(\Fun(\TT_{/X}^\op, \Sp)) \adj \Mod_{S_{\GG_S}}(\Fun(\TT_{/X}^\op, \Sp)) \noloc \phi_*.
    \]
    As we have seen earlier, \cite[Proposition 6.2.1(d)]{Ell3} says that the left adjoint restricts to tempered local systems, but \cite[Proposition 6.2.1(b)]{Ell3} says that the right adjoint restricts to tempered local systems as well, as required.

    For the second part, let $f\colon X \to Y$ be a map of orbispaces, and consider the commutative diagram
    \[\begin{tikzcd}
        {\LocSys_\GG(Y)} & {\LocSys_\GG(X)} \\
        {\LocSys_{\GG_S}(Y)} & {\LocSys_{\GG_S}(X)}
        \arrow["{f^*}", from=1-1, to=1-2]
        \arrow["{f^*}", from=2-1, to=2-2]
        \arrow["{\phi^*}"', from=1-1, to=2-1]
        \arrow["{\phi^*}", from=1-2, to=2-2]
    \end{tikzcd}\]
    We need to show that the Beck--Chevalley map
    \[
        f^* \phi_* \too \phi_* f^*
    \]
    is an isomorphism.
    This is clear at the level of modules, since all functors involved are given by restriction, and the conclusion follows since tempered local systems are full subcategories to which the adjunctions restrict.
\end{proof}

This can be rephrased more globally as follows.

\begin{cor}
    The functor of tempered local systems from \cref{ls-def} factors as
    \[
        \LocSys\colon \PDivor \too \tFun^{\cocpl}(\OS^\op, \tCat).
    \]
\end{cor}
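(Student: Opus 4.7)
The plan is to verify the two conditions that characterize the subcategory $\tFun^{\cocpl}(\OS^\op, \tCat) \subset \tFun(\OS^\op, \tCat)$: that the functor $\LocSys$ sends objects of $\PDivor$ to $\OS$-cocomplete $\OS$-parameterized categories, and that it sends morphisms of $\PDivor$ to $\OS$-cocontinuous $\OS$-parameterized maps.

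For the condition on objects, note that \cref{ls-sa} directly asserts that for any $R_\GG \in \PDivor$, the $\OS$-parameterized category $\LocSys_\GG\colon \OS^\op \to \tCat$ is $\OS$-complete and $\OS$-cocomplete. So nothing more needs to be done at the level of objects.

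For the condition on morphisms, recall that every morphism in $\PDivor$ is, by definition, of the form induced by a map of commutative ring spectra $\phi\colon R \to S$. By \cref{base-change-sa}, the induced $\OS$-parameterized map $\phi^*\colon \LocSys_\GG \to \LocSys_{\GG_S}$ is an $\OS$-parameterized left adjoint. Now \cref{pla-cond} asserts that any $\OS$-parameterized left adjoint between $\OS$-cocomplete $\OS$-parameterized $\tCat$-objects is in particular $\OS$-cocontinuous. Combining these two facts shows that $\LocSys$ sends every morphism in $\PDivor$ to an $\OS$-cocontinuous map.

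There is no real obstacle here since all of the substantive work has been carried out in \cref{ls-sa} (Lurie's tempered ambidexterity and existence of adjoints) and \cref{base-change-sa} (the $\OS$-parameterized adjunction for base-change). The only minor subtlety worth being explicit about is that \cref{pla-cond} applies because we have already established $\OS$-cocompleteness of both the source and target parameterized categories via \cref{ls-sa}, which ensures that the hypothesis of \cref{pla-cond} is met.
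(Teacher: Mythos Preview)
Your proposal is correct and follows essentially the same approach as the paper's proof: use \cref{ls-sa} for $\OS$-cocompleteness on objects, and \cref{base-change-sa} together with \cref{pla-cond} to deduce $\OS$-cocontinuity on morphisms. The only difference is that you are slightly more explicit about the logical structure.
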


\begin{proof}
    By \cref{ls-sa}, for any $R \in \CAlg(\Sp)$ and any $\GG \in \PDivor$, the $\OS$-parameterized category $\LocSys_\GG\colon \OS^\op \to \tCat$ is $\OS$-cocomplete.
    Next, by \cref{base-change-sa}, the base-change map is $\OS$-parameterized left adjoint, hence by \cref{pla-cond} it is in particular $\OS$-cocontinuous, concluding the proof.
\end{proof}

\subsection{Categorified Transchromatic Characters}

An essential part of the categorified transchromatic character map is the treatment of the case of an oriented $\PP$-divisible group with a constant direct summand, developed in \cite[\S6.4]{Ell3}.
To recall the main result, some definitions are in order.

\begin{defn}[{\cite[Definition 2.7.1]{Ell3}}]
    A \tdef{colattice} is a torsion abelian group $\Lambda$ such that for any $n > 0$ the map $n\colon \Lambda \to \Lambda$ is surjective with finite kernel.
\end{defn}

\begin{example}
    The group $(\Qp/\Zp)^h$ is a colattice for every $h \geq 0$.
\end{example}

\begin{defn}[{\cite[Construction 3.4.3]{Ell3}}]\label{fl}
    We denote by $\mdef{\fLLam}\colon \OS \to \OS$ the \emph{$\Lambda$-formal free loop space} functor, given by
    \[
        \fLLam X = \colim_{\Lambda_0 \subset \Lambda} \Hom_\OS(\BB \widehat{\Lambda}_0, X),
    \]
    where the colimit ranges over the finite subgroups $\Lambda_0 \subset \Lambda$, and is computed in the category of orbispaces, and $\widehat{\Lambda}_0$ denotes the Pontryagin dual.
\end{defn}

We recall from \cite[\S3.4]{Ell3} that for any space $X$, there is an associated comparison map
\[
    \fLLam X \too \Map(\BB \widehat{\Lambda}, X)
    \qin \OS
\]
which can informally be described as forgetting the adic topology on $\widehat{\Lambda}$.

\begin{warn}
    The comparison map is often not an equivalence (see \cite[Warning 3.4.9]{Ell3}), but is an an equivalence on $\pi$-finite spaces by \cite[Proposition 3.4.7]{Ell3}.
\end{warn}

\begin{example}\label{p-adic-fl}
    For the case $\Lambda = (\Qp/\Zp)^h$ the target of the comparison map is the \tdef{$h$-fold $p$-adic free loop space} $\mdef{\fL_p^h} := \Map(\BB \Zp^h, -)$.
\end{example}

As explained in \cite[\S6.4]{Ell3}, given an oriented $\PP$-divisible group $\GG$ over $R$, for each $X \in \OS$, there is an adjunction
\[
    \Phi\colon \LocSys_{\GG \oplus \Lambda}(X) \adj \LocSys_{\GG}(\fLLam X) \noloc \Psi,
\]
where $\GG \oplus \Lambda$ denotes the direct sum of $\GG$ with the constant oriented $\PP$-divisible group associated to $\Lambda$ (\cite[Construction 2.7.5]{Ell3}).
Our goal in this subsection is to make this construction into an $\OS$-parameterized adjunction.

We note that it is not a priori clear that either the construction of $\Phi$ or $\Psi$ from \cite[\S6.4]{Ell3} is functorial in $X$.
To pin point the difficulty, which makes their construction in loc.\ cit.\ somewhat cumbersome, observe that for $T \in \TT$, the $\Lambda$-formal free loop space $\fLLam T$ is often not in $\TT$.
Indeed, its set of connected components is isomorphic to $\Hom_\Ab(\widehat{\Lambda}, \pi_1(T))$ (see \cite[Notation 6.4.2]{Ell3}).
However, this is the only obstacle, and extending $\TT$ correspondingly makes it easier to make the constructions functorial.

\begin{defn}
    We let $\mdef{\Spacesoab}$ be the full subcategory of $\Spaces$ of $1$-finite spaces with abelian fundamental groups, that is, the category generated from $\TT$ under finite coproducts.
\end{defn}

\begin{prop}\label{fllam-res}
    The functor $\fLLam\colon \OS \to \OS$ restricts to a functor $\fLLam\colon \Spacesoab \to \Spacesoab$.
\end{prop}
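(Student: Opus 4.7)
The plan is to prove the statement by reducing to generators of $\Spacesoab$ under finite coproducts (i.e., the objects of $\TT$), and then directly evaluating the colimit defining $\fLLam$ on such objects.

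First, I would show that $\fLLam$ preserves finite coproducts. Since $\TT$ is closed under products (as $\BB H_1 \times \BB H_2 \simeq \BB(H_1 \times H_2) \in \TT$), the internal hom in $\OS = \PSh(\TT^\op)$ satisfies $[\BB \widehat{\Lambda}_0, Y](T) \simeq Y(T \times \BB \widehat{\Lambda}_0)$ for $T \in \TT$. Because coproducts in a presheaf category are computed pointwise, the functor $[\BB \widehat{\Lambda}_0, -]$ preserves coproducts. Since colimits commute with colimits, the filtered colimit over $\Lambda_0 \subset \Lambda$ also preserves them, so $\fLLam$ preserves finite coproducts. It therefore suffices to verify that $\fLLam(\BB H) \in \Spacesoab$ for each finite abelian group $H$.

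For finite abelian $H$ and finite $\widehat{\Lambda}_0$, the standard computation of mapping spaces between classifying spaces of finite abelian groups gives
\[
    \Hom_\OS(\BB \widehat{\Lambda}_0, \BB H) \simeq \coprod_{\phi \in \Hom(\widehat{\Lambda}_0, H)} \BB H,
\]
since all centralizers coincide with $H$ by abelianness, and conjugation acts trivially on the set of homomorphisms. Taking the filtered colimit over finite subgroups $\Lambda_0 \subset \Lambda$, which dually corresponds to the cofiltered diagram of finite quotients $\widehat{\Lambda}_0$ of $\widehat{\Lambda}$, one obtains
\[
    \fLLam(\BB H) \simeq \coprod_{\phi \in \colim_{\Lambda_0} \Hom(\widehat{\Lambda}_0, H)} \BB H \simeq \coprod_{\phi \in \Hom_{\mathrm{cts}}(\widehat{\Lambda}, H)} \BB H.
\]
To conclude that this is an object of $\Spacesoab$, it remains to show that $\Hom_{\mathrm{cts}}(\widehat{\Lambda}, H)$ is a finite set.

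The key place the colattice hypothesis enters is in this finiteness statement, and I expect this to be the main (albeit routine) point of the argument. If $n = |H|$, then any continuous homomorphism $\widehat{\Lambda} \to H$ factors through the finite quotient $\widehat{\Lambda}/n\widehat{\Lambda}$, which by Pontryagin duality is the dual of the $n$-torsion subgroup $\Lambda[n]$. The defining property of a colattice ensures that $\Lambda[n]$ is finite (being the kernel of multiplication by $n$), hence so is $\widehat{\Lambda}/n\widehat{\Lambda}$, and therefore $\Hom_{\mathrm{cts}}(\widehat{\Lambda}, H) \simeq \Hom(\widehat{\Lambda}/n\widehat{\Lambda}, H)$ is a finite set. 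Combining this with the coproduct computation above shows $\fLLam(\BB H) \in \Spacesoab$, and step one then concludes the proof for arbitrary objects of $\Spacesoab$.
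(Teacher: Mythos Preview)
Your argument is correct and follows the same overall outline as the paper's proof: reduce to $X = \BB H$ via finite coproducts, identify the result as a coproduct of copies of $\BB H$ indexed by $\Hom(\widehat{\Lambda},H)$, and then use the colattice hypothesis (via $\widehat{\Lambda}/n\widehat{\Lambda} \simeq \widehat{\Lambda[n]}$ finite for $n=|H|$) to see that this indexing set is finite.

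The one genuine difference is in how the reduction is carried out. The paper first invokes the comparison isomorphism $\fLLam X \simeq \Map(\BB\widehat{\Lambda},X)$ for $\pi$-finite $X$ from \cite[Proposition~3.4.7]{Ell3}, and then argues entirely with the ordinary mapping space: connectedness of $\BB\widehat{\Lambda}$ gives the coproduct reduction, and $\Map(\BB\widehat{\Lambda},\BB H)\simeq \Hom_{\Ab}(\widehat{\Lambda},H)/\!\!/H$ gives the computation. You instead work directly with the defining filtered colimit, using the internal-hom formula $[\BB\widehat{\Lambda}_0,Y](T)\simeq Y(T\times \BB\widehat{\Lambda}_0)$ in the presheaf topos (valid since $\TT$ is closed under products) to see that each $\Hom_{\OS}(\BB\widehat{\Lambda}_0,-)$ preserves coproducts, and then pass to the colimit. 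Your route is a bit more hands-on but avoids citing the comparison result; the paper's route is shorter once that result is available. The endgame finiteness argument is identical in both.
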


\begin{proof}
    Let $X \in \Spacesoab$.
    As mentioned above, by \cite[Proposition 3.4.7]{Ell3}, the comparison map
    \[
        \fLLam X \too \Map(\BB \widehat{\Lambda}, X)
    \]
    is an isomorphism, so it remains to show that $\Map(\BB \widehat{\Lambda}, X)$ is $1$-finite and has abelian fundamental groups.

    Since $\BB \widehat{\Lambda}$ is connected, the functor $\Map(\BB \widehat{\Lambda}, -)$ commutes with finite coproducts, so we may assume that $X$ is connected, i.e.\ $X = \BB H$ for some finite abelian group $H$.
    In this case we can describe the mapping space as
    \[
        \Map(\BB \widehat{\Lambda}, \BB H) \simeq \Hom_\Ab(\widehat{\Lambda}, H) /\mkern-5mu/ H,
    \]
    where the $H$ acts by conjugation.
    Since $H$ is a finite abelian group, it suffices to show that $\Hom_\Ab(\widehat{\Lambda}, H)$ is a finite set.

    Let $n = |H|$ denote the order of $H$.
    Recall that by assumption $\Lambda[n]$ is finite.
    As the Pontryagin dual of the inclusion $\Lambda[n] \subset \Lambda$ is the quotient map $\widehat{\Lambda} \to \widehat{\Lambda}/n$, we see that quotient is finite.
    Finally, the quotient map induces an isomorphism
    \[
        \Hom_\Ab(\widehat{\Lambda}/n, H) \iso \Hom_\Ab(\widehat{\Lambda}, H),
    \]
    and the result follows since $\widehat{\Lambda}/n$ and $H$ are finite.
\end{proof}

Using these observations, we can give a more convenient model for tempered local systems.

\begin{prop}\label{ls-model}
    Right Kan extension defines an $\OS$-parameterized map
    \[
        \Mod_{R_\GG}(\Fun(\TT_{/(-)}^\op, \Sp)) \too \Mod_{R_\GG}(\Fun((\Spacesoab)_{/(-)}^\op, \Sp))
    \]
    which is level-wise fully faithful.
    In particular, $\LocSys_\GG$ is level-wise fully faithfully embedded in the target.
\end{prop}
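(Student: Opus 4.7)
The plan is to construct the map as pointwise right Kan extension along the fully faithful inclusion $i_X \colon \TT_{/X} \hookrightarrow (\Spacesoab)_{/X}$ induced from $\TT \hookrightarrow \Spacesoab$, and to verify three things in order: level-wise full faithfulness on the underlying functor categories, compatibility with the $R_\GG$-module structure, and $\OS$-parameterized functoriality.

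For level-wise full faithfulness, note that since $\TT \hookrightarrow \Spacesoab$ is fully faithful, so is each $i_X$. Right Kan extension $(i_X)_*$ along a fully faithful functor is fully faithful, because the unit $M \too i_X^* (i_X)_* M$ is an isomorphism by the pointwise formula: for $T \in \TT_{/X}$ the comma category parameterizing the limit has $\mrm{id}_T$ as initial object. This gives full faithfulness of the underlying map $\Fun(\TT_{/X}^\op, \Sp) \too \Fun((\Spacesoab)_{/X}^\op, \Sp)$.

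Next I would upgrade the construction to modules. Restriction $i_X^*$ is symmetric monoidal for the pointwise tensor product, so its right adjoint $(i_X)_*$ is lax symmetric monoidal, and hence sends $R_\GG$-modules to $(i_X)_* R_\GG$-modules. To interpret the right-hand side in the statement, one reads "$R_\GG$" on $(\Spacesoab)^\op$ as the right Kan extension $i_* R_\GG$ of $R_\GG$ along $i$; the pointwise formula for right Kan extension then identifies $(i_X)_*(R_\GG|_{\TT_{/X}})$ with the restriction of $i_* R_\GG$ to $(\Spacesoab)_{/X}$, since both are computed by the same limit over $\TT_{/S}$ for $S \in (\Spacesoab)_{/X}$. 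Full faithfulness descends to modules, as modules are detected by the underlying functor together with action data that $(i_X)_*$ preserves because it is lax symmetric monoidal and fully faithful on underlying objects.

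For $\OS$-parameterized functoriality, for $f \colon X \to Y$ one needs a Beck--Chevalley isomorphism $(i_X)_* \circ f^* \iso f^* \circ (i_Y)_*$, where $f^*$ denotes restriction along post-composition with $f$. This again follows from the pointwise formula, since the comma category computing $(i_Y)_* M$ at $S \to Y$ only sees $\TT_{/S}$, independently of whether $S$ is viewed over $X$ or over $Y$. Assembling these coherently in the manner of \cref{ls-def} yields the $\OS$-parameterized map. The last sentence of the statement is then immediate, since $\LocSys_\GG$ was defined in \cref{ls-def} as a full subcategory of $\Mod_{R_\GG}(\Fun(\TT_{/(-)}^\op, \Sp))$ naturally in $X$, and the composition of fully faithful maps is fully faithful. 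The main technical obstacle is making the $R_\GG$-compatibility fully coherent rather than merely pointwise: one must identify $(i_X)_*$ of the restriction of $R_\GG$ with the restriction of $i_* R_\GG$ naturally in $X \in \OS$, which requires tracking symmetric monoidal structures through the full construction chain of \cref{ls-def}.
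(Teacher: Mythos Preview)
Your proposal is correct, but the paper takes a slicker route that sidesteps exactly the coherence obstacle you flag at the end. Rather than building the map level-wise and then verifying Beck--Chevalley by hand, the paper simply reruns the entire construction of \cref{ls-def} with $\Spacesoab$ in place of $\TT$; the inclusion $\TT \subset \Spacesoab$ then induces a natural transformation between the two constructions automatically, so the $\OS$-parameterized functoriality and the $R_\GG$-module compatibility come packaged together with no extra work. The paper also observes that the right Kan extension along $\TT \hookrightarrow \Spacesoab$ is actually \emph{strong} symmetric monoidal (not merely lax): since every $S \in \Spacesoab$ is a finite coproduct $\coprod_i T_i$ with $T_i \in \TT$, the value of the right Kan extension at $S$ is the finite product $\prod_i F(T_i)$, and finite products commute with $\otimes$ in $\Sp$. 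This makes the passage to modules immediate, whereas your lax-monoidal argument for full faithfulness on modules, while valid, is more delicate to make fully coherent. In short, your approach works but creates the coherence bookkeeping you yourself identify; the paper's approach of ``repeat the construction with the larger indexing category'' eliminates it.
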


\begin{proof}
    Repeat the construction from \cref{ls-def} with $\Spacesoab$ in place of $\TT$.
    Note that the inclusion $\TT \subset \Spacesoab$ is fully faithful, so the right Kan extensions
    \[
        \Fun(\TT_{/(-)}^\op, \Sp) \too \Fun((\Spacesoab)_{/(-)}^\op, \Sp)
    \]
    are symmetric monoidal and level-wise fully faithful.
    All of the constructions in \cref{ls-def} are tensored over the case of $\pt \in \OS$, hence given $R_\GG \in \Fun(\TT^\op, \CAlg(\Sp))$, we get fully faithful functors
    \[
        \Mod_{R_\GG}(\Fun(\TT_{/(-)}^\op, \Sp)) \too \Mod_{R_\GG}(\Fun((\Spacesoab)_{/(-)}^\op, \Sp))
    \]
    as required.
\end{proof}

With this model in mind, the construction of $\Psi$ becomes easy, and manifestly functorial in $X \in \OS$.
Recall from \cref{fllam-res} that $\fLLam\colon \OS \to \OS$ restricts to $\fLLam\colon \Spacesoab \to \Spacesoab$.
Therefore, we get an $\OS$-parameterized map
\[
    (\fLLam)^*\colon \Fun((\Spacesoab)_{/\fLLam(-)}^\op, \Sp) \too \Fun((\Spacesoab)_{/(-)}^\op, \Sp),
\]
given by
\[
    (\fLLam)^*(\lsF)(T \to X) = \lsF(\fLLam T \to \fLLam X),
\]
which is also evidently symmetric monoidal.
Also note that the for $X = \pt$, it sends $R_\GG$ to $R_{\GG \oplus \Lambda}$.
Thus, we get an induced $\OS$-parameterized map
\[
    (\fLLam)^*\colon \Mod_{R_\GG}(\Fun((\Spacesoab)_{/\fLLam(-)}^\op, \Sp)) \too \Mod_{R_{\GG \oplus \Lambda}}(\Fun((\Spacesoab)_{/(-)}^\op, \Sp)).
\]
By \cite[Proposition 6.4.6]{Ell3}, for every $X \in \OS$, this sends tempered local systems to tempered local systems under the inclusion from \cref{ls-model}.
Thus, by \cite[Proposition A.11]{Ram}, this gives rise to an $\OS$-parameterized map which we turn into a definition.

\begin{defn}
    We denote by
    \[
        \mdef{\Psi}\colon (\fLLam)^* \LocSys_{\GG} \too \LocSys_{\GG \oplus \Lambda}
    \]
    the subfunctor of $(\fLLam)^*$ restricted to tempered local systems.
\end{defn}

As we mentioned above, Lurie proves that this functor admits a left adjoint at every $X \in \OS$.
In proving that this assembles into a parameterized adjunction, we shall employ the formula for the left adjoint.
In order to state this formula, we prove the following lemma stated in \cite[Remark 6.4.11]{Ell3}.

\begin{lem}
    Let $T \in \TT$ and let $X \in \OS$.
    There is a canonical equivalence
    \[
        \Map(T, \fLLam X) \simeq \colim_{\Lambda_0 \subset \Lambda} \Map(T \times \BB \widehat{\Lambda}_0, X).
    \]
\end{lem}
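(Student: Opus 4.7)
The plan is to unwind the definition of $\fLLam X$ from \cref{fl} and use the fact that, since $T \in \TT$, the orbispace associated to $T$ (via the right adjoint $\Spaces \hookrightarrow \OS$) is the representable presheaf on $\TT^\op$. By the Yoneda lemma, $\Map_\OS(T, -)$ is therefore naturally isomorphic to the evaluation-at-$T$ functor $\OS \to \Spaces$. The key property we exploit is that evaluation on a representable is computed pointwise on colimits in the presheaf category $\OS = \PSh(\TT^\op)$, and so commutes with the defining colimit of $\fLLam X$.

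Carrying this out, I would first write
\[
    \Map_\OS(T, \fLLam X)
    \simeq (\fLLam X)(T)
    \simeq \left(\colim_{\Lambda_0 \subset \Lambda} \Hom_\OS(\BB \widehat{\Lambda}_0, X)\right)(T)
    \simeq \colim_{\Lambda_0 \subset \Lambda} \Hom_\OS(\BB \widehat{\Lambda}_0, X)(T),
\]
where the last isomorphism uses that colimits in $\OS$ are computed pointwise. Next, I would identify the pointwise value of the internal hom: again by the Yoneda lemma and the tensor--hom adjunction in $\OS$,
\[
    \Hom_\OS(\BB \widehat{\Lambda}_0, X)(T)
    \simeq \Map_\OS(T, \Hom_\OS(\BB \widehat{\Lambda}_0, X))
    \simeq \Map_\OS(T \times \BB \widehat{\Lambda}_0, X),
\]
and assemble the two displays to obtain the claimed equivalence, with the naturality in $T$ and $X$ inherited from Yoneda.

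The only subtlety, and so the main point to handle carefully, is a coherent reconciliation of the two pictures of $T$ as an orbispace: on the one hand as the representable presheaf, and on the other hand as the image of $T \in \Spaces$ under the right adjoint embedding $\Spaces \hookrightarrow \OS$ used throughout the paper. These agree because for $T, T' \in \TT$ one has $\Map_\Spaces(T', T) \simeq \Map_\TT(T', T)$ (as $\TT$ is a full subcategory of $\Spaces$), so the two descriptions give the same presheaf on $\TT$. Once this identification is in place, all remaining manipulations are formal consequences of the cartesian closed structure on $\OS$ and Yoneda, and no further input is needed.
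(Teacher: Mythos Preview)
Your proof is correct and follows essentially the same approach as the paper: both arguments use that $T$ is representable in $\OS$ so that $\Map_\OS(T,-)$ commutes with the defining colimit, and then invoke the adjunction $\Map(T,\Hom_\OS(\BB\widehat{\Lambda}_0,X))\simeq\Map(T\times\BB\widehat{\Lambda}_0,X)$. Your final paragraph, reconciling the Yoneda embedding of $\TT$ with the right adjoint embedding of spaces, is a point the paper's proof simply asserts without comment, so your treatment is in fact slightly more careful.
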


\begin{proof}
    Recall that the identification of $\TT$ as a subcategory of $\OS$ is via the Yoneda embedding.
    Therefore, $\Map(T, -)$ commutes with colimits, so that
    \begin{align*}
        \Map(T, \fLLam X)
        &\simeq \Map(T, \colim_{\Lambda_0 \subset \Lambda} \Hom_\OS(\BB \widehat{\Lambda}_0, X))\\
        &\simeq \colim_{\Lambda_0 \subset \Lambda} \Map(T, \Hom_\OS(\BB \widehat{\Lambda}_0, X))\\
        &\simeq \colim_{\Lambda_0 \subset \Lambda} \Map(T \times \BB \widehat{\Lambda}_0, X).
    \end{align*}
\end{proof}

\begin{defn}\label{rep-by}
    Let $T \in \TT$ and let $X \in \OS$.
    We say that a map $T \to \fLLam X$ is \tdef{represented by} a map $T \times \BB \widehat{\Lambda}_0 \to X$ if it corresponds to a map in the colimit in the above lemma.
\end{defn}

\begin{prop}[{\cite[Theorem 6.4.9 and Example 6.4.12]{Ell3}}]\label{phi-form}
    For every $X \in \OS$, the functor $\Psi$ at $X$ admits a left adjoint
    \[
        \Phi\colon \LocSys_{\GG \oplus \Lambda}(X) \too \LocSys_{\GG}(\fLLam X).
    \]
    For $\lsF \in \LocSys_{\GG \oplus \Lambda}(X)$ and a map $T \to \fLLam X$ represented by a map $T \times \BB \widehat{\Lambda}_0 \to X$, we have
    \[
        \Phi(\lsF)(T \to \fLLam X)
        \simeq R_{\GG}^T \otimes_{R_{\GG}^{\fLLam(T \times \BB \widehat{\Lambda}_0)}} \lsF(T \times \BB \widehat{\Lambda}_0 \to X).
    \]
\end{prop}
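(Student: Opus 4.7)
The plan is to invoke Lurie's results from \cite[\S6.4]{Ell3} almost directly. The existence of $\Phi$ as a pointwise left adjoint to $\Psi$ at each $X \in \OS$ is \cite[Theorem 6.4.9]{Ell3}, and the explicit formula on a test object $T \to \fLLam X$ represented by $T \times \BB \widehat{\Lambda}_0 \to X$ is the content of \cite[Example 6.4.12]{Ell3}. The only work required on our side is to match our $\OS$-parameterized formulation of $\Psi$ with Lurie's level-wise construction; this is automatic, since we defined $\Psi$ as the restriction to tempered local systems of the symmetric monoidal pullback along $\fLLam\colon \Spacesoab \to \Spacesoab$, and \cref{ls-model} identifies the ambient categories of presheaves on $\TT_{/(-)}^\op$ with the right Kan extensions to $\Spacesoab_{/(-)}^\op$.

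If one wished to rederive the formula from scratch, the plan would be as follows. First, both source and target are presentable, and $\Psi$ preserves small limits because it is obtained by restricting a level-wise pullback functor between module categories. The adjoint functor theorem then produces a pointwise left adjoint $\Phi$. Second, to compute $\Phi(\lsF)$ on a test map $T \to \fLLam X$ represented by $f\colon T \times \BB \widehat{\Lambda}_0 \to X$, one uses the universal property of $\Phi$ together with the fact that $R_\GG^{(-)}$ encodes the coefficient algebra of a tempered local system: the canonical inclusion $T \hookrightarrow \fLLam(T \times \BB \widehat{\Lambda}_0)$ induces a map of commutative ring spectra $R_\GG^{\fLLam(T \times \BB \widehat{\Lambda}_0)} \to R_\GG^T$, and $\Phi(\lsF)(T \to \fLLam X)$ is expressed as the cobase-change of $\lsF(f)$ along this map, which is exactly the claimed relative tensor product.

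The principal obstacle is verifying that the formula actually produces a tempered local system on $\fLLam X$, rather than merely an element of the ambient module-valued presheaf category. This descent step, handled in \cite[\S6.4]{Ell3}, relies on the splitting of $\GG \oplus \Lambda$ into its summands and the constancy of $\Lambda$, which allow one to reduce the temperedness conditions to the case of the constant summand, where they are controlled by the orientation data through the algebras $R_\GG^{\BB \widehat{\Lambda}_0}$. Given this, the adjunction $\Phi \dashv \Psi$ at the level of tempered local systems follows from the module-level adjunction by restriction, completing the proof.
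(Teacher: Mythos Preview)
Your proposal is correct and matches the paper's approach: the paper states this proposition purely as a citation to \cite[Theorem 6.4.9 and Example 6.4.12]{Ell3} and provides no proof of its own, so invoking Lurie directly is exactly what is done. Your additional remarks about matching the $\OS$-parameterized $\Psi$ with Lurie's level-wise construction via \cref{ls-model}, and your sketch of how the formula could be rederived, go beyond what the paper offers but are consistent with it.
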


We now show that the level-wise left adjoints assembles into an $\OS$-parameterized adjunction.

\begin{prop}\label{cat-chars-sa}
    There is an $\OS$-parameterized adjunction
    \[
        \Phi\colon \LocSys_{\GG \oplus \Lambda} \adj (\fLLam)^* \LocSys_{\GG} \noloc \Psi.
    \]
\end{prop}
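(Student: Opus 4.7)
The plan is to apply \cref{param-adj}, or rather its dual characterizing parameterized right adjoints, to the already-constructed $\OS$-parameterized map $\Psi\colon (\fLLam)^*\LocSys_\GG \to \LocSys_{\GG \oplus \Lambda}$. Exhibiting $\Psi$ as a parameterized right adjoint automatically produces $\Phi$ as an $\OS$-parameterized left adjoint, sidestepping the need to construct $\Phi$ as a parameterized functor from scratch.

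The first condition of \cref{param-adj}, that each $\Psi_X$ admits a pointwise left adjoint $\Phi_X$, is exactly the content of \cref{phi-form}. The substance is therefore the Beck--Chevalley condition: for every $f\colon X \to Y$ in $\OS$, passing to vertical left adjoints in the commutative square for $\Psi$ yields a mate $\Phi_X f^* \to (\fLLam f)^* \Phi_Y$, which must be shown to be an isomorphism.

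I would verify this pointwise on maps $T \to \fLLam X$ for $T \in \TT$, using the fully faithful embedding of \cref{ls-model} to justify the reduction. By \cref{rep-by}, such a map is represented by some $T \times \BB\widehat{\Lambda}_0 \to X$, and its post-composition with $\fLLam f$ is represented by $T \times \BB\widehat{\Lambda}_0 \to X \to Y$. Applying the explicit formula of \cref{phi-form} to both $\Phi_X(f^*\lsG)$ and $(\fLLam f)^*\Phi_Y(\lsG)$ produces the common expression
\[
R_\GG^T \otimes_{R_\GG^{\fLLam(T \times \BB\widehat{\Lambda}_0)}} \lsG(T \times \BB\widehat{\Lambda}_0 \to Y),
\]
for $\lsG \in \LocSys_{\GG \oplus \Lambda}(Y)$, exhibiting the two functors as pointwise isomorphic.

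The main obstacle will be to check that the abstractly defined Beck--Chevalley mate coincides with this manifest identification, rather than with some other self-equivalence of the common value. This requires unpacking the unit and counit of the adjunctions $\Phi_X \dashv \Psi_X$ and $\Phi_Y \dashv \Psi_Y$ coming from \cref{phi-form}, and tracking how they interact with pullback along $f$. Once this coherent identification is in place, the dual of \cref{param-adj} yields the desired $\OS$-parameterized adjunction.
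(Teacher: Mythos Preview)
Your approach is essentially identical to the paper's: apply \cref{param-adj} (in its dual form), use \cref{phi-form} for the pointwise left adjoints, and verify the Beck--Chevalley condition by evaluating both $\Phi(f^*\lsF)$ and $(\fLLam f)^*\Phi(\lsF)$ at a map $T \to \fLLam X$ represented by some $T \times \BB\widehat{\Lambda}_0 \to X$, obtaining the same expression via the formula. The paper does not address your coherence worry about whether the abstract mate agrees with the manifest identification any more explicitly than you do---it simply records the chain of isomorphisms and declares the result ``as required''---so your caution there is, if anything, more scrupulous than the original.
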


\begin{proof}
    By \cref{param-adj} it remains to check that the Beck--Chevalley condition holds.
    Let $f\colon X \to Y$ be a map of orbispaces, and consider the commutative diagram
    \[\begin{tikzcd}
        {\LocSys_{\GG}(\fLLam Y)} & {\LocSys_{\GG}(\fLLam X)} \\
        {\LocSys_{\GG \oplus \Lambda}(Y)} & {\LocSys_{\GG \oplus \Lambda}(X)}
        \arrow["{(\fLLam f)^*}", from=1-1, to=1-2]
        \arrow["\Psi"', from=1-1, to=2-1]
        \arrow["\Psi", from=1-2, to=2-2]
        \arrow["{f^*}", from=2-1, to=2-2]
    \end{tikzcd}\]
    Our goal is to show that the Beck--Chevalley map associated to vertical left adjoints
    \[
        \Phi f^* \too (\fLLam f)^* \Phi
    \]
    is an isomorphism.
    To check that a natural transformation is an isomorphism, it suffices to check that it is an isomorphism on each object, namely that for every $\lsF \in \LocSys_{\GG \oplus \Lambda}(Y)$ the map
    \[  
        \Phi(f^*(\lsF)) \too (\fLLam f)^*(\Phi(\lsF))
    \]
    is an isomorphism.
    This, again, is a natural transformation, so it suffices to check that it is an isomorphism on each object $T \to \fLLam X$, namely that
    \[
        \Phi(f^*(\lsF))(T \to \fLLam X) \too (\fLLam f)^*(\Phi(\lsF))(T \to \fLLam X)
    \]
    is an isomorphism.
    
    The map $T \to \fLLam X$ is represented by some $T \times \BB \widehat{\Lambda}_0 \to X$.
    It follows that the composition $T \to \fLLam X \xrightarrow{\fLLam f} \fLLam Y$ is represented by $T \times \BB \widehat{\Lambda}_0 \to X \xrightarrow{f} Y$.
    Using the description of $\Phi$ from \cref{phi-form} we get
    \begin{align*}
        \Phi(f^*(\lsF))(T \to \fLLam X)
        &\simeq R_{\GG}^T \otimes_{R_{\GG}^{\fLLam(T \times \BB \widehat{\Lambda}_0)}} f^*(\lsF)(T \times \BB \widehat{\Lambda}_0 \to X)\\
        &\simeq R_{\GG}^T \otimes_{R_{\GG}^{\fLLam(T \times \BB \widehat{\Lambda}_0)}} \lsF(T \times \BB \widehat{\Lambda}_0 \to X \xrightarrow{f} Y)\\
        &\simeq \Phi(\lsF)(T \to \fLLam X \xrightarrow{\fLLam f} \fLLam Y)\\
        &\simeq (\fLLam f)^*(\Phi(\lsF))(T \to \fLLam X)
    \end{align*}
    as required.
\end{proof}

\subsection{The Local Case}

For any $t \geq 0$, we may consider the full subcategory
\[
    \LocSys_\GG^{\Kt}(X) \subset \LocSys_\GG(X)
\]
on those tempered local systems taking values in $\SpKt$.
Clearly, for $f\colon X \to Y$, the map $f^*\colon \LocSys_\GG(Y) \to \LocSys_\GG(X)$ sends a $\Kt$-local tempered local systems to a $\Kt$-local tempered local systems, thus by \cite[Proposition A.11]{Ram} we have an $\OS$-parameterized map
\[
    \LocSys_\GG^{\Kt} \too \LocSys_\GG.
\]
As explained in \cite[Notation 6.1.15]{Ell3}, for any $X \in \OS$, the inclusion admits a left adjoint $\LKt$, given by applying $\Kt$-localization level-wise, which we assemble over $\OS$.

\begin{prop}\label{local-tempered}
    There is an $\OS$-parameterized adjunction
    \[
        \LKt\colon \LocSys_\GG \adj \LocSys_\GG^{\Kt}\colon i.
    \]
\end{prop}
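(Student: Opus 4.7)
The plan is to apply \cref{param-adj} (viewing $\LKt$ as the candidate $\OS$-parameterized left adjoint): we must verify (1) pointwise existence of the right adjoint, and (2) the Beck--Chevalley condition for every $f\colon X \to Y$ in $\OS$. The $\OS$-parameterized map $i\colon \LocSys_\GG^{\Kt} \hookrightarrow \LocSys_\GG$ is already available, as noted in the paragraph preceding the proposition, by applying \cite[Proposition A.11]{Ram} to the fact that $f^*$ preserves $\Kt$-locality. So the content is to promote the pointwise left adjoints to an $\OS$-parameterized left adjoint.

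For condition (1), at each $X \in \OS$ the inclusion $\LocSys_\GG^{\Kt}(X) \hookrightarrow \LocSys_\GG(X)$ admits the pointwise $\Kt$-localization $\LKt$ as a left adjoint, by \cite[Notation 6.1.15]{Ell3}. For condition (2), taking the commutative square associated with $i$ and passing to horizontal left adjoints, we need the Beck--Chevalley map
\[
    \LKt f^* \too f^* \LKt\colon \LocSys_\GG(Y) \too \LocSys_\GG^{\Kt}(X)
\]
to be an isomorphism. Using the model from \cref{ls-def} where tempered local systems live fully faithfully in $\Mod_{R_\GG}(\Fun(\TT_{/(-)}^\op, \Sp))$, the functor $f^*$ is precomposition with $\TT_{/X} \to \TT_{/Y}$, while $\LKt$ is postcomposition with the $\Kt$-localization functor $\Sp \to \SpKt$ applied objectwise. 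As precomposition and postcomposition commute, the canonical comparison map is an isomorphism already at the level of modules, and the result descends to the full subcategories of tempered local systems since $\LKt$ and $f^*$ both preserve them.

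With (1) and (2) established, \cref{param-adj} produces the desired $\OS$-parameterized adjunction $\LKt \dashv i$. The only genuine step is checking that the pointwise $\Kt$-localization on tempered local systems agrees with postcomposition with $\LKt$ in the presentation of \cref{ls-def}; this is built into Lurie's description in \cite[Notation 6.1.15]{Ell3}, and once it is in hand the Beck--Chevalley verification is formal. I do not anticipate a substantive obstacle.
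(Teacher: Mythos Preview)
Your proposal is correct and follows essentially the same approach as the paper: both apply (the dual of) \cref{param-adj} to the already-constructed $\OS$-parameterized map $i$, reducing to the pointwise adjunction from \cite[Notation 6.1.15]{Ell3} and the Beck--Chevalley condition, which is verified by noting that $f^*$ is precomposition while $\LKt$ is postcomposition and these commute. The paper's proof is simply the terse version of yours.
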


\begin{proof}
    As mentioned above, at every $X \in \OS$ there is an adjunction $\LKt \dashv i$.
    By \cref{param-adj}, it remains to check the Beck--Chevalley condition.
    Indeed, pullback along $f\colon X \to Y$ is given by pre-composition, while $\Kt$-localization is given by post-composition, so they clearly commute.
\end{proof}

Assume now that $R$ is $\Kn$-local for some $n \geq 0$, and that $\GG$ has height $n$ (that is, it is its Quillen $\PP$-divisible group).
In \cite[Theorem 6.3.1]{Ell3}, Lurie shows that the map
\[
    \LocSys_\GG(X) \too \Mod_R(\Sp)^{|X|}
\]
described in \cref{tempered-to-ordinary} restricts to an equivalence on $\Kn$-local objects.
We now phrase this in an $\OS$-parameterized manner.

\begin{defn}
    Let $R$ be a $\Kn$-local commutative ring spectrum.
    We denote by $\mdef{\cMod_R} := \Mod_R(\SpKn)$ the category of $\Kn$-local modules over $R$.
\end{defn}

\begin{prop}\label{kn-tempered-to-ordinary}
    Let $R$ be a $\Kn$-local commutative ring spectrum and let $\GG$ be its Quillen $\PP$-divisible group.
    The $\OS$-parameterized map from \cref{tempered-to-ordinary} restricts to an equivalence
    \[
        \LocSys_\GG^{\Kn} \iso \cMod_R^{(-)}.
    \]
\end{prop}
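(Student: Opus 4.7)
The plan is to prove this by reducing to a level-wise statement and then invoking Lurie's Theorem 6.3.1. Recall that an equivalence in the $2$-category $\tFun(\OS^\op, \tCat)$ is detected object-wise on $\OS$. So the proof amounts to verifying two things: (i) at each $X \in \OS$, the $\OS$-parameterized map of \cref{tempered-to-ordinary} restricts to a functor $\LocSys_\GG^{\Kn}(X) \to \cMod_R^{|X|}$, assembling into an $\OS$-parameterized map $\LocSys_\GG^{\Kn} \to \cMod_R^{(-)}$; and (ii) this restriction is an equivalence at every $X$.

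For (i), I would unwind the construction in \cref{tempered-to-ordinary}. The map there is built from the restriction along $\{\pt\} \hookrightarrow \TT$, which at the level of indexing categories gives $\TT_{/X}^\op \to \{\pt\}_{/|X|}^\op \simeq |X|$, combined with evaluation at $\pt \in \TT$ which sends $R_\GG$ to $R$. By definition, a $\Kn$-local tempered local system is one whose underlying functor $\TT_{/X}^\op \to \Sp$ takes values in $\SpKn$; restricting along $\{\pt\}$ preserves this property, and then $R$-module structure in $\SpKn$ is precisely the condition to land in $\cMod_R = \Mod_R(\SpKn)$. Thus the map restricts level-wise to the claimed subfunctors, and \cite[Proposition A.11]{Ram} packages this into an $\OS$-parameterized map.

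For (ii), this is precisely the content of \cite[Theorem 6.3.1]{Ell3}, where Lurie shows — under the standing hypothesis that $R$ is $\Kn$-local and $\GG = \GG_R^Q$ is its Quillen $\PP$-divisible group — that the induced functor $\LocSys_\GG^{\Kn}(X) \to \cMod_R^{|X|}$ is an equivalence for every orbispace $X$. Combined with (i), this shows that the $\OS$-parameterized map is an equivalence.

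I do not anticipate any serious obstacle: all of the substantive work has been done upstream, both in the construction of the $\OS$-parameterized map in \cref{tempered-to-ordinary} and in Lurie's equivalence. The only point requiring minor care is identifying the target of the restricted map as $\cMod_R^{(-)}$, which is essentially definitional, since the $\Kn$-local objects in $\Mod_R(\Sp)^{|X|}$ are exactly $\cMod_R^{|X|}$.
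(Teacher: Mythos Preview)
Your proposal is correct and follows essentially the same approach as the paper: restrict the $\OS$-parameterized map to the $\Kn$-local subfunctors via \cite[Proposition A.11]{Ram}, then invoke \cite[Theorem 6.3.1]{Ell3} to obtain the level-wise equivalence, and conclude since equivalences in $\tFun(\OS^\op,\tCat)$ are detected object-wise. Your treatment of step (i) is in fact slightly more explicit than the paper's, which simply asserts that the map ``clearly lands'' in $\cMod_R^{|X|}$.
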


\begin{proof}
    First, for every $X \in \Spaces$ the map
    \[
        \LocSys_\GG(X) \too \Mod_R(\Sp)^{|X|}
    \]
    clearly lands in $\cMod_R^X \subset \Mod_R(\Sp)^X$.
    Thus, by \cite[Proposition A.11]{Ram}, the $\OS$-parameterized map restricts an $\OS$-parameterized map
    \[
        \LocSys_\GG^{\Kn} \too \cMod_R^{(-)}.
    \]
    Second, by \cite[Theorem 6.3.1]{Ell3}, the restricted map is an isomorphism at every $X \in \OS$, and the result follows since a natural transformation is an isomorphism if and only if it is an isomorphism at each object.
\end{proof}

	\section{Transchromatic Induced Characters}\label{sec-ind-char}

In this section we apply all the machinery developed in the previous sections to construct the categorified transchromatic character map coherently and establish its semiadditivity (\cref{main-thm}), deduce the compatibility of the transchromatic character map with induction (\cref{ind-char-form}), and calculate chromatic cardinalities (\cref{chrom-card}).

\begin{defn}
    Let $\mdef{\En} \in \CAlg(\SpKn)$ denote the Lubin--Tate spectrum.
    We denote by $\mdef{\GG} := \GG_{\En}^Q$ its Quillen oriented $p$-divisible group (see \cite[\S4.6]{Ell2}), and by $\mdef{\cMod_{\En}}$ the category of $\Kn$-local $\En$-modules.
\end{defn}

Let $0 \leq t \leq n$, and consider $\LKt\En \in \CAlg(\SpKt)$.
Base-changing $\GG$ along the localization $\En \to \LKt\En$, we get an oriented $p$-divisible group with an associated connected--\'etale short exact sequence (c.f.\ \cite[\S2.5]{Ell2})
\[
    0 \too \GG_{\LKt\En}^\circ \too \GG_{\LKt\En} \too \GG_{\LKt\En}^\et \too 0.
\]
The connected part $\GG_{\LKt\En}^\circ$ has height $t$ since it is isomorphic to the Quillen oriented $p$-divisible group of $\LKt\En$ (c.f.\ \cite[Remark 2.5.3]{Ell3}).
Therefore, the height of  the \'etale part $\GG_{\LKt\En}^\et$ is $n-t$.
As explained in \cite[Definition 2.7.12 and Proposition 2.7.15]{Ell3}, there is a universal commutative $\LKt\En$-algebra, called the splitting algebra, over which the short exact sequence splits and the \'etale part becomes constant on $\Lambda := (\Qp/\Zp)^{n-t}$.

\begin{defn}\label{ct-def}
    We denote by $\mdef{C_t} := \Split_\Lambda(\GG_{\LKt\En}^\circ \to \GG_{\LKt\En}) \in \CAlg(\SpKt)$ the splitting algebra, namely, the initial $\LKt\En$-algebra equipped with an isomorphism
    \[
        \GG_{C_t} \simeq \GG_{C_t}^Q \oplus \Lambda.
    \]
    We denote by $\mdef{\cMod_{C_t}}$ the category of $\Kt$-local $C_t$-modules.
\end{defn}

\begin{remark}
    We warn the reader that in \cite{Stapleton} Stapleton denotes by $C_t$ the non-spectral version of the splitting algebra considered here, which can be recovered by applying $\pi_0$.
\end{remark}

Associated to $\cMod_{\En}$ and $\cMod_{C_t}$ are the canonically $\Spaces$-parameterized categories $\cMod_{\En}^{(-)}$ and $\cMod_{C_t}^{(-)}$ from \cref{can-param}.
Pulling back along underlying functor $|-|\colon \OS \to \Spaces$ we get $\OS$-parameterized categories $\cMod_{\En}^{|-|}$ and $\cMod_{C_t}^{|-|}$.
We also pull back the latter along $\fLLam\colon \OS \to \OS$ from \cref{fl} to obtain the $\OS$-parameterized category  $\cMod_{C_t}^{|\fLLam(-)|}$.
With these in place, we can now construct the categorified transchromatic character map.

\begin{defn}\label{cch-def}
    We define the \tdef{categorified transchromatic character map}
    \[
        \cch\colon \cMod_{\En}^{|-|} \too \cMod_{C_t}^{|\fLLam(-)|}
        \qin \tFun(\OS^\op, \tCat)
    \]
    to be the composition of $\OS$-parameterized maps
    \begin{align*}
        \cMod_{\En}^{|-|}
        &\iso \LocSys_{\GG}^{\Kn}\\
        &\hooktoo \LocSys_{\GG}\\
        &\too \LocSys_{\GG_{C_t}}\\
        &\iso \LocSys_{\GG_{C_t}^Q \oplus \Lambda}\\
        &\too (\fLLam)^* \LocSys_{\GG_{C_t}^Q}\\
        &\too (\fLLam)^* \LocSys_{\GG_{C_t}^Q}^{\Kt}\\
        &\iso \cMod_{C_t}^{|\fLLam(-)|}
    \end{align*}
    where the maps are
    \begin{enumerate}
        \item the inverse of the isomorphism from \cref{kn-tempered-to-ordinary},
        \item the inclusion from \cref{local-tempered},
        \item the base-change along $\En \to C_t$ from \cref{base-change-sa},
        \item the isomorphism of oriented $p$-divisible groups from \cref{ct-def},
        \item the left adjoint from \cref{cat-chars-sa},
        \item the $\Kt$-localization from \cref{local-tempered},
        \item the isomorphism from \cref{kn-tempered-to-ordinary}.
    \end{enumerate}
\end{defn}

We now calculate the map in the case of $\pt \in \OS$.

\begin{prop}\label{cat-trans-char-pt}
    The map $\cch_\pt$ is equivalent to
    \[
        \LKt(C_t \otimes_{\En} -)\colon \cMod_{\En} \too \cMod_{C_t}.
    \]
\end{prop}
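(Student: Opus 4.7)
The plan is to trace through each of the seven constituent maps of \cref{cch-def} at the orbispace $\pt \in \OS$. A preliminary observation is that $\fLLam \pt \simeq \pt$: by \cref{fl}, the defining colimit is $\colim_{\Lambda_0 \subset \Lambda} \Hom_\OS(\BB \widehat{\Lambda}_0, \pt)$, a filtered colimit of terminal orbispaces, hence terminal. Consequently $\cMod_{C_t}^{|\fLLam \pt|} \simeq \cMod_{C_t}$, so the target is as stated and the claim reduces to identifying the composite as $\LKt(C_t \otimes_\En -)$.

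For $M \in \cMod_\En$, four of the seven steps leave the value at $\pt \in \TT$ essentially unchanged. Steps (1) and (7) are the equivalences of \cref{kn-tempered-to-ordinary}; by construction in \cref{tempered-to-ordinary}, they intertwine the ``underlying module'' functor with evaluation of the $\TT^\op$-diagram at $\pt$. Step (2) is a fully faithful inclusion of $\Kn$-local objects, and step (4) is induced by an isomorphism of oriented $p$-divisible groups and hence does not alter the underlying spectral diagram. The base-change of step (3) is, per the construction in \cref{ls-def}, the pointwise tensor $\lsF \mapsto (C_t)_{\GG_{C_t}} \otimes_{R_\GG} \lsF$ in $\Fun(\TT^\op, \Sp)$; since $R_\GG(\pt) = \En$ and $(C_t)_{\GG_{C_t}}(\pt) = C_t$, its value at $\pt$ is $C_t \otimes_\En M$. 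Step (6) applies $\LKt$ level-wise and therefore at $\pt$ is simply $\Kt$-localization.

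The crux is step (5), the left adjoint $\Phi$ at $\pt$. Here I would use the explicit formula from \cref{phi-form} with $X = \pt$ and $T = \pt$, taking the trivial subgroup $\Lambda_0 = 0 \subset \Lambda$ (so $\BB \widehat{\Lambda}_0 \simeq \pt$) to represent the unique map $\pt \to \fLLam \pt \simeq \pt$. The formula then reads
\[
    \Phi(\lsF)(\pt \to \pt) \simeq R_\GG^\pt \otimes_{R_\GG^{\fLLam \pt}} \lsF(\pt \to \pt) \simeq \lsF(\pt \to \pt),
\]
because in the setting of step (5) both ring spectra agree with $C_t$. Thus $\Phi$ preserves the value at $\pt \in \TT$, and chaining all seven steps yields $\cch_\pt(M) \simeq \LKt(C_t \otimes_\En M)$.

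The main technical point requiring care is the bookkeeping: confirming that each equivalence between a local-systems category and its module-category avatar is compatible with evaluation at $\pt \in \TT$, so that the composition above really tracks the underlying $\pt$-module at every stage. This follows from the constructions in \cref{sec-temp} but deserves explicit verification; beyond that, the rest of the argument is purely formal.
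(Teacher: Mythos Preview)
Your proposal is correct and follows essentially the same approach as the paper: both identify the crux as showing that $\Phi$ preserves the value at $\pt$, and both verify this via the formula of \cref{phi-form} with $\Lambda_0 = 0$ representing $\pt \to \fLLam\pt$. The paper is simply terser about the remaining six steps, dismissing them in one sentence as ``computed level-wise,'' whereas you spell out each one explicitly.
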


\begin{proof}
    Observe that the last map in the composition defining $\cch_\pt$ is given by evaluation at $\pt \iso \fLLam(\pt) \in \TT_{/\fLLam(\pt)}$.
    We shall prove that $\Phi$ from \cref{cat-chars-sa} does not change the value at the point, and conclude as all of the other maps involved are computed level-wise as in the statement of the proposition.
    Indeed, $\pt \to \fLLam(\pt)$ is, of course, represented by $\pt \to \pt$ (in the sense of \cref{rep-by}).
    Therefore, by \cref{phi-form}, for $\lsF \in \LocSys_{\GG_{C_t}^Q \oplus \Lambda}(\pt)$ we have
    \[
        \Phi(\lsF)(\pt \to \fLLam(\pt)) \simeq \Phi(\lsF)(\pt \to \pt),
    \]
    namely, it does not change the value at the point, and conclusion follows.
\end{proof}

Recall from \cite[Proposition 3.4.7]{Ell3} (see also \cref{p-adic-fl}) that for a $\pi$-finite space $X$ the comparison map
\[
    \fLLam X \too \fL_p^{n-t} X := \Map(\BB \Zp^h, X)
\]
is an isomorphism.
Therefore, upon restricting to $\Spacespi \subset \OS$, we may replace all instances of $\fLLam$ above by $\fL_p^{n-t}$.
With this in mind, we state our main theorem.

\begin{thm}\label{main-thm}
    The restriction of the categorified transchromatic character map to $\Spacespi$
    \[
        \cch\colon \cMod_{\En}^{(-)} \too \cMod_{C_t}^{\fL_p^{n-t}(-)}
    \]
    is a $\Spacespi$-semiadditive $\Spacespi$-parameterized map between $\Spacespi$-semiadditive $\Spacespi$-parameterized categories.
\end{thm}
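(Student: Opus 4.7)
The plan is to take the seven-step composition defining $\cch$ from \cref{cch-def}, restrict all $\OS$-parameterized categories to $\Spacespi \subset \OS$, and verify two things: first, that every intermediate $\Spacespi$-parameterized category appearing in the composition is $\Spacespi$-semiadditive; second, that every one of the seven maps is a $\Spacespi$-parameterized left or right adjoint. Once this is done, \cref{left-sa} (and its evident dual, which also follows from reading \cref{cocont-sa} dually) upgrades each map to a $\Spacespi$-semiadditive map, and the composition of $\Spacespi$-semiadditive maps is $\Spacespi$-semiadditive since continuity and cocontinuity are each preserved under composition.

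For the semiadditivity of the intermediate categories, the tempered variants $\LocSys_\GG$, $\LocSys_{\GG_{C_t}}$, $\LocSys_{\GG_{C_t}^Q \oplus \Lambda}$ and $\LocSys_{\GG_{C_t}^Q}$, restricted to $\Spacespi$, are $\Spacespi$-semiadditive by \cref{ls-sa}. The ordinary module categories $\cMod_{\En}^{(-)}$ and $\cMod_{C_t}^{(-)}$ are canonically $\Spacespi$-parameterized from the $\infty$-semiadditive categories $\cMod_{\En}$ and $\cMod_{C_t}$, and are $\Spacespi$-semiadditive by \cref{can-param-sa}. The $\Kn$- and $\Kt$-local subcategories $\LocSys_\GG^{\Kn}$ and $\LocSys_{\GG_{C_t}^Q}^{\Kt}$ then inherit $\Spacespi$-semiadditivity via the identifications of \cref{kn-tempered-to-ordinary}. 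Finally, the pulled-back parameterizations $(\fLLam)^* \LocSys_{\GG_{C_t}^Q}$, $(\fLLam)^* \LocSys_{\GG_{C_t}^Q}^{\Kt}$ and $\cMod_{C_t}^{\fL_p^{n-t}(-)}$ are handled using that on $\Spacespi$ the functor $\fLLam$ agrees with $\fL_p^{n-t} = \Map(\BB\Zp^{n-t}, -)$ (by \cite[Proposition 3.4.7]{Ell3}), which visibly preserves finite limits, hence pullbacks and the terminal object; applying \cref{pb-sa} yields the required $\Spacespi$-semiadditivity.

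For the parameterized adjointness of the seven maps, the three isomorphisms (steps 1, 4, 7) are trivially both parameterized left and right adjoints. Each of the remaining four maps was already established as an $\OS$-parameterized adjoint in \cref{sec-temp}, and these restrict to $\Spacespi$: step 2 is a parameterized right adjoint (the inclusion of $\Kn$-local tempered local systems) by \cref{local-tempered}; step 3 is a parameterized left adjoint (base-change along $\En \to C_t$) by \cref{base-change-sa}; step 5 is the parameterized left adjoint $\Phi$ from \cref{cat-chars-sa}; and step 6 is a parameterized left adjoint (parameterized $\Kt$-localization) from \cref{local-tempered}, pulled back along $\fLLam$. Here the pullback of a parameterized adjunction along a change of parameterization remains a parameterized adjunction, since pre-composition preserves units, counits and triangle identities. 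Thus each of the four non-trivial steps is $\Spacespi$-semiadditive by \cref{left-sa} and its dual, completing the proof.

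The bulk of the work has been carried out in \cref{sec-temp}, so what remains is essentially an assembly of facts. The main subtlety to watch is the change of parameterization along $\fL_p^{n-t}$: one must make explicit that it restricts to an endofunctor of $\Spacespi$ preserving finite limits, so as to legitimately invoke \cref{pb-sa} on the relevant categories and descend the $\Kt$-localization and $\Phi$ adjunctions of steps 5 and 6 from $\OS$-parameterized to $\Spacespi$-parameterized adjunctions. Both points are routine, but together they form the only nontrivial bookkeeping beyond the identifications and adjunctions already established in the preceding sections.
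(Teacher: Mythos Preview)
Your proof is correct and follows essentially the same approach as the paper: verify that each intermediate $\Spacespi$-parameterized category is $\Spacespi$-semiadditive (via \cref{ls-sa}, \cref{can-param-sa}, and \cref{pb-sa}), observe that each of the seven maps is a parameterized left or right adjoint, and conclude via \cref{left-sa} and its dual. You are in fact slightly more careful than the paper in two places---explicitly noting that $\fL_p^{n-t}$ preserves the terminal object (required by the hypotheses of \cref{pb-sa}), and spelling out that the step~6 adjunction is obtained by pulling back the $\Kt$-localization adjunction along the change of parameterization---but the argument is otherwise identical.
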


\begin{proof}
    First, we claim that all $\Spacespi$-parameterized $\Spacespi$-categories involved in \cref{cch-def} are $\Spacespi$-semiadditive.
    The (non-parameterized) categories $\cMod_{\En}$ and $\cMod_{C_t}$ are $\infty$-semiadditive by \cite[Theorem 5.3.1]{TeleAmbi} (see also \cite[Theorem 5.2.1]{HL}).
    Hence, the associated canonically $\Spacespi$-parameterized categories are $\Spacespi$-semiadditive by \cref{can-param-sa}.
    All of the $\Spacespi$-parameterized categories of the form $\LocSys_{\GG'}$ where $\GG'$ is some oriented $p$-divisible group are $\Spacespi$-semiadditive by \cref{ls-sa}.
    Since $\fL_p^{n-t}$ preserves pullbacks, \cref{pb-sa} implies that applying $(\fL_p^{n-t})^*$ preserves $\Spacespi$-semiadditivity.
    Thus, all $\Spacespi$-parameterized categories involved are $\Spacespi$-semiadditive as required.
    
    Observe that each of the $\OS$-parameterized maps involved in the construction of $\cch$ is either a left or right adjoint (or even an equivalence).
    Thus, the restriction of each map to $\Spacespi \subset \OS$ is either $\Spacespi$-parameterized left or right adjoint, and since it is between $\Spacespi$-semiadditive categories it is $\Spacespi$-semiadditive by \cref{left-sa} (and it dual).
    Therefore, their composition is $\Spacespi$-semiadditive as well, concluding the proof.
\end{proof}

Using this and the decategorification process to parameterized cohomology with integration, we get the following.

\begin{thm}\label{ind-char-form}
    Let $0 \leq t \leq n$, then there is an isomorphism
    \[
        C_t \otimes_{\En} \En^X \simeq C_t^{\fL_p^{n-t} X}
        \qin \cMod_{C_t}
    \]
    natural in $X \in \Spano(\Spacespi)$.
\end{thm}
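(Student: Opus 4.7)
The plan is to deduce \cref{ind-char-form} as a direct decategorification of \cref{main-thm}, using the machinery from \cref{sec-decat}. Since \cref{main-thm} provides a $\Spacespi$-semiadditive $\Spacespi$-parameterized map $\cch\colon \cMod_\En^{(-)} \to \cMod_{C_t}^{\fL_p^{n-t}(-)}$ between $\Spacespi$-semiadditive $\Spacespi$-parameterized categories, we can apply the functoriality of parameterized cohomology with integration (\cref{coh-int-fun}) to the object $\En \in \cMod_\En$, obtaining a natural isomorphism in $X \in \Spano(\Spacespi)$:
\[
    \cch_\pt\!\left(\En_{\cMod_\En^{(-)}}^X\right) \simeq \left(\cch_\pt(\En)\right)_{\cMod_{C_t}^{\fL_p^{n-t}(-)}}^{X}.
\]

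Next, I would identify each side explicitly. On the left, since $\cMod_\En^{(-)}$ is the canonical parameterization, the parameterized cohomology is just the usual mapping spectrum $\En_{\cMod_\En^{(-)}}^X \simeq \En^X$, and then \cref{cat-trans-char-pt} identifies $\cch_\pt(\En^X) \simeq C_t \otimes_\En \En^X$ (understood in the $\Kt$-local sense, since we land in $\cMod_{C_t}$). On the right, $\cch_\pt(\En) \simeq C_t$ again by \cref{cat-trans-char-pt}.

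To identify the right-hand side with the desired expression $C_t^{\fL_p^{n-t} X}$, I would invoke the change-of-parameterization result \cref{G-sa-cohint} applied to the functor $G = \fL_p^{n-t}\colon \Spacespi \to \Spacespi$, which preserves the terminal object and pullbacks, with the canonically parameterized category $\CC = \cMod_{C_t}^{(-)}$, so that $G^*(\CC) = \cMod_{C_t}^{\fL_p^{n-t}(-)}$. Applied to $M = C_t$, this yields a natural isomorphism in $X \in \Spano(\Spacespi)$:
\[
    (C_t)_{\cMod_{C_t}^{\fL_p^{n-t}(-)}}^{X} \simeq (C_t)_{\cMod_{C_t}^{(-)}}^{\fL_p^{n-t}X} \simeq C_t^{\fL_p^{n-t}X}.
\]
Chaining the two natural isomorphisms then gives the theorem.

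The hard work has already been done in the earlier sections: the semiadditivity of $\cch$ is \cref{main-thm}, the fact that parameterized semiadditive maps intertwine integration maps is \cref{coh-int-fun}, and the delicate point of identifying the parameterized integration on $\cMod_{C_t}^{\fL_p^{n-t}(-)}$ with the $(n-t)$-fold $p$-adic free-loop-shift of the integration on $\cMod_{C_t}^{(-)}$ is exactly the content of \cref{G-sa-cohint}, which in turn rests on \cref{span-2-sa} that $\Spant(\cX)$ carries a parameterized semiadditive object. So in this proof there is essentially no remaining obstacle beyond assembling these ingredients; the only mild caveat is to note that $\fL_p^{n-t}$ preserves pullbacks of $\pi$-finite spaces and the terminal object, so that the hypotheses of \cref{G-sa-cohint} are met.
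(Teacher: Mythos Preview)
Your approach is essentially identical to the paper's: apply \cref{coh-int-fun} to $\cch$ and $\En$, then \cref{G-sa-cohint} to $\fL_p^{n-t}$, and identify both sides via \cref{cat-trans-char-pt} and the fact that canonically parameterized cohomology is ordinary cohomology.

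There is one genuine step you have swept under the rug with the parenthetical ``understood in the $\Kt$-local sense''. By \cref{cat-trans-char-pt}, the functor $\cch_\pt$ is $\LKt(C_t \otimes_{\En} -)$, so what your argument actually produces is a natural isomorphism
\[
    \LKt(C_t \otimes_{\En} \En^X) \simeq C_t^{\fL_p^{n-t} X}
\]
in $\Spano(\Spacespi)$. The statement of the theorem, however, has no $\LKt$ on the left: it asserts that the ordinary tensor product $C_t \otimes_{\En} \En^X$ already lies in $\cMod_{C_t}$ and is isomorphic to the right-hand side there. You still owe the verification that $C_t \otimes_{\En} \En^X$ is $\Kt$-local for every $\pi$-finite $X$. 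The paper addresses this explicitly, either by citing Lurie's non-functorial version of the isomorphism, or by a self-contained argument: the inclusion $i\colon (\fLLam)^* \LocSys_{\GG_{C_t}^Q}^{\Kt} \hookrightarrow (\fLLam)^* \LocSys_{\GG_{C_t}^Q}$ is an $\OS$-parameterized right adjoint (\cref{local-tempered}), hence $\Spacespi$-semiadditive on restriction, so it commutes with $f_{X*} f_X^*$; since $C_t$ is $\Kt$-local, so is $f_{X*} f_X^* C_t$ computed in the larger category, and the localization in $\cch_\pt(\En^X)$ is redundant. You should include this step.
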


\begin{proof}
    To avoid excessive subscripts, we denote $\CC := \cMod_{\En}^{(-)}$, $\DD := \cMod_{C_t}^{(-)}$, and $\rmE := \En$.
    We apply our decategorification process:
    combining \cref{coh-int-fun} applied to $\cch$ with \cref{G-sa-cohint} applied to $\fL_p^{n-t}$, we get isomorphisms
    \[
        \cch_\pt(\rmE_\CC^X)
        \simeq (\cch_\pt(\rmE))_{(\fL_p^{n-t})^*(\DD)}^X
        \simeq (\cch_\pt(\rmE))_{\DD}^{\fL_p^{n-t} X}
        \qin \DD_\pt
    \]
    natural in $X \in \Spano(\Spacespi)$.
    Recall that since $\CC$ and $\DD$ are canonically $\Spacespi$-parameterized, the $\Spacespi$-parameterized cohomology is the ordinary cohomology as in \cref{hom-ord}.
    Combining this with the description of $\cch_\pt$ from \cref{cat-trans-char-pt}, the isomorphism above translates to a natural isomorphism
    \[
        \LKt(C_t \otimes_{\En} \En^X) \simeq C_t^{\fL_p^{n-t} X}
        \qin \cMod_{C_t}.
    \]
    
    To conclude, we need to show that the $\Kt$-localization on the left-hand side is redundant, i.e.\ that $C_t \otimes_{\En} \En^X$ is already $\Kt$-local for every $\pi$-finite space $X$.
    This follows from the non-functorial version of the isomorphism from the statement of the theorem, proven by Lurie \cite{Ell3}.
    Alternatively, recall from \cref{local-tempered} that the inclusion
    \[
        i\colon (\fLLam)^* \LocSys_{\GG_{C_t}^Q}^{\Kt} \hooktoo (\fLLam)^* \LocSys_{\GG_{C_t}^Q}
    \]
    is an $\OS$-parameterized right adjoint.
    As in the proof of \cref{main-thm}, it follows from \cref{left-sa} that the restriction of $i$ to $\Spacespi$ is $\Spacespi$-semiadditive.
    Thus, $i$ commutes with $f_X^*$ and $f_{X*}$, where $f_X\colon X \to \pt$ is the unique map.
    Since $C_t$ is $\Kt$-local, we get that $f_{X*} f_X^* C_t$ as computed in $(\fLLam)^* \LocSys_{\GG_{C_t}^Q}$ is already $\Kt$-local, rendering the $\Kt$-localization redundant in evaluating $\cch_\pt(\En^X)$.
\end{proof}

We also apply this to calculate the chromatic cardinalities.
Recall from \cite[Under Example 2]{card} that since the $\pi_0$ of the unit map $\SS_{\Kn} \to \En$ lands in $\Zp \subset \pi_0 \En$, the cardinality $|X|_{\En}$ is a $p$-adic integer, so although the chromatic cardinalities at different heights are a priori in different rings, the following statement makes sense.

\begin{thm}\label{chrom-card}
    For any $\pi$-finite space $X$ we have
    \[
        |X|_{\En} = |\fL_p^n X|_\QQ
        \qin \ZZ_{(p)}.
    \]
\end{thm}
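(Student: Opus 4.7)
The plan is to extract the theorem directly from \cref{ind-char-form} at $t=0$ by evaluating it on the cardinality span. Specializing \cref{ind-char-form} to $t=0$ provides an isomorphism $C_0 \otimes_{\En} \En^X \simeq C_0^{\fL_p^n X}$ in $\cMod_{C_0}$, natural in $X \in \Spano(\Spacespi)$. Under the equivalence $\End_{\Spano(\Spacespi)}(\pt) \simeq \Spacespi$, the object $X$ corresponds to the loop span $\pt \oot[f_X] X \too[f_X] \pt$. Evaluating the natural transformation on this span, the left-hand functor returns the base-change to $C_0$ of the cardinality endomorphism of $\En$, namely multiplication by the image of $|X|_{\En}$ in $\pi_0 C_0$. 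Since $\fL_p^n$ preserves the terminal object and pullbacks, it descends to a functor on $\Spano(\Spacespi)$ sending our span to $\pt \oot \fL_p^n X \too \pt$, so the right-hand functor returns multiplication by $|\fL_p^n X|_{C_0}$. Cleanly, this bookkeeping is packaged by combining \cref{card-fun} applied to $\cch$ with \cref{G-sa-card} applied to the change of parameterization along $\fL_p^n$, yielding an equality in $\pi_0 C_0$:
\[
    |X|_{\En} \;=\; |\fL_p^n X|_{C_0}.
\]

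Next, I identify $|\fL_p^n X|_{C_0}$ with $|\fL_p^n X|_{\QQ}$. Since $C_0 \in \CAlg(\SpKt)$ with $t=0$, the spectrum $C_0$ is rational, so the unit $\QQ \to C_0$ is an $\infty$-semiadditive map of commutative ring spectra. By functoriality of Baez--Dolan cardinalities (again a special case of \cref{card-fun}), $|\fL_p^n X|_{C_0}$ equals the image of $|\fL_p^n X|_{\QQ} \in \QQ$ in $\pi_0 C_0$. Combined with the previous step, the images of $|X|_{\En} \in \Zp$ and $|\fL_p^n X|_{\QQ} \in \QQ$ coincide in $\pi_0 C_0$.

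For the final arithmetic step, both elements live in the subring $\Qp \cong \QQ \otimes_\ZZ \Zp \subset \pi_0 C_0$. This inclusion is injective because $\pi_0 \En$ is flat over $\Zp$ (so $\Qp \hookrightarrow \pi_0 \LKt \En$) and $C_0$ is faithfully flat over $\LKt \En$ (see \cite[Proposition 2.7.15]{Ell3}). Thus $|X|_{\En} = |\fL_p^n X|_{\QQ}$ already holds inside $\Qp$, and since the former lies in $\Zp$ and the latter in $\QQ$, both lie in $\Zp \cap \QQ = \ZZ_{(p)}$ and agree there. The only non-routine point in this proof is the first step: unpacking the cardinality span and verifying that its image under the categorified character isomorphism is again a cardinality map of the free-loop-shifted space. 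This is precisely what the framework of \cref{sec-decat} is built to handle, and it is why \cref{card-fun} and \cref{G-sa-card} were formulated; with them in hand the remainder is an arithmetic chase.
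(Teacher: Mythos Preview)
Your proof is correct and follows essentially the same route as the paper: apply \cref{card-fun} to the categorified character map $\cch$ (via \cref{main-thm}) together with \cref{G-sa-card} for the change of parameterization along $\fL_p^n$ to obtain $|X|_{\En} = |\fL_p^n X|_{C_0}$ in $\pi_0 C_0$, then compare with $|\fL_p^n X|_\QQ$ via the unit $\QQ \to C_0$ and conclude by the injectivity of $\Zp \hookrightarrow \pi_0 C_0$. The only cosmetic difference is that you phrase the opening as an evaluation of the natural isomorphism of \cref{ind-char-form} on the cardinality span before invoking \cref{card-fun} and \cref{G-sa-card}, whereas the paper goes directly to those lemmas; and you cite \cite[Proposition 2.7.15]{Ell3} for the faithful flatness where the paper cites \cite[Corollary 6.8]{HKR}.
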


\begin{proof}
    Consider the categorified transchromatic character map $\cch$ going to height $t = 0$
    \[
        \cch\colon \cMod_{\En}^{(-)} \too \cMod_{C_0}^{\fL_p^n(-)},
    \]
    which is $\Spacespi$-semiadditive by \cref{main-thm}.
    Applying \cref{card-fun}, we get
    \[
        \cch_\pt(|X|_{\En}) = |X|_{\cMod_{C_0}^{\fL_p^n(-)}}
        \qin \pi_0(C_0).
    \]
    Recall that by \cref{G-sa-card} we have
    \[
        |\fL_p^n X|_{C_0}
        := |\fL_p^n X|_{\cMod_{C_0}}
        = |X|_{\cMod_{C_0}^{\fL_p^n(-)}}
        \qin \pi_0(C_0).
    \]
    Combining the two, we get
    \[
        \cch_\pt(|X|_{\En}) = |\fL_p^n X|_{C_0}
        \qin \pi_0(C_0).
    \]
    Note that the restriction of the map $\pi_0(\En) \to \pi_0(C_0)$ to $\Zp \subset \pi_0(\En)$ is injective (for example since $\pi_0(C_0)$ is faithfully flat over $p^{-1}\En$ by \cite[Corollary 6.8]{HKR}).
    Therefore, we get an equality of $p$-adic integers
    \[
        |X|_{\En} = |\fL_p^n X|_{C_0}
        \qin \Zp.
    \]
    
    Consider the unit map $\QQ \to C_0$, which is injective on $\pi_0$.
    As above, we get an equality of rational numbers
    \[
        |\fL_p^n X|_\QQ = |\fL_p^n X|_{C_0}
        \qin \QQ
    \]
    
    Combining the results, we see that the two cardinalities in question are equal, and they are rational $p$-adic integers, i.e.\ in $\ZZ_{(p)}$, as required.
\end{proof}

	\bibliographystyle{alpha}
	\bibliography{refs}

\end{document}